\frenchspacing  \linespread{1.1}
\newtheorem{remark}{Remark}[section]
\theoremstyle{plain}
\newtheorem{lemma}{Lemma}[section]
\newtheorem{proposition}{Proposition}[section]
\newtheorem{theorem}{Theorem}
\newtheorem{definition}{Definition}[section]
\newtheorem{corollary}{Corollary}[section]
\newtheorem{example}{Example}[section]
\newcommand{\G}{\mathcal{G}}
\newcommand{\sspan}{\hbox{\rm span}}
\newcommand{\beqn}{\begin{eqnarray}}
\newcommand{\eeqn}{\end{eqnarray}}
\newcommand{\beq}{\begin{eqnarray}}
\newcommand{\eeq}{\end{eqnarray}}
\newcommand{\bpro}{\begin{proposition}}
\newcommand{\epro}{\end{proposition}}
\newcommand{\blem}{\begin{lemma}}
\newcommand{\elem}{\end{lemma}}
\newcommand{\bdfn}{\begin{definition}}
\newcommand{\edfn}{\end{definition}}
\newcommand{\bcor}{\begin{corollary}}
\newcommand{\ecor}{\end{corollary}}
\newcommand{\bthm}{\begin{theorem}}
\newcommand{\ethm}{\end{theorem}}
\newcommand{\bex}{\begin{example}}
\newcommand{\eex}{\end{example}}
\newcommand{\brmq}{\begin{remark}}
\newcommand{\ermq}{\end{remark}}
\newcommand{\benum}{\begin{enumerate}}
\newcommand{\eenum}{\end{enumerate}}
\newcommand{\bitem}{\begin{itemize}}
\newcommand{\eitem}{\end{itemize}}
\theoremstyle{plain}
\title
      {On the classification of 2-solvable Frobenius Lie algebras}
\author{ Andr\'e Diatta$^{( 1)}$\footnote{ \footnotesize \noindent (1) Aix-Marseille Univ, CNRS, Centrale Marseille, Institut Fresnel, 13013 Marseille, France.
\newline $^*$ Corresponding author: andre.diatta@fresnel.fr; andrediatta@gmail.com.}
;  Bakary Manga$^{( 2)}$
  and  
Ameth Mbaye$^{( 2)}$\footnote{\footnotesize \noindent(2) D\'epartement de Math\'ematiques et Informatique,
Universit\'e Cheikh Anta Diop de Dakar,
BP 5005 Dakar-Fann, Dakar, S\'en\'egal. Email: bakary.manga@ucad.edu.sn;  ameth3.mbaye@ucad.edu.sn
} 
\thanks{AD and BM acknowledge financial support from the Simons Foundation through the NLAGA project.}
  }
\begin{document}
\maketitle



\date{}

\maketitle

\begin{abstract}
We discuss the classification of $2$-solvable Frobenius Lie algebras. We prove that every $2$-solvable Frobenius Lie algebra splits as a semidirect sum of an $n$-dimensional vector space V and an $n$-dimensional maximal Abelian subalgebra (MASA) of  the full space of endomorphisms of $V.$
We supply a complete classification of $2$-solvable Frobenius Lie algebras corresponding to nonderogatory endomorphisms,
 as well as those given by
maximal Abelian nilpotent subalgebras (MANS)  
of class 2, hence of Kravchuk signature $(n-1,0,1)$. In low dimensions, we classify all  2-solvable Frobenius Lie algebras in general up to  dimension $8$. 
We correct and  complete the classification  list of MASAs of $\mathfrak{sl}(4,\mathbb R)$ by Winternitz and 
Zassenhaus. As a biproduct, we give a simple proof  that every nonderogatory endormorphism of a real vector space admits a Jordan form and also provide a new characterization of Cartan subalgebras of $\mathfrak{sl}(n,\mathbb R)$.
\end{abstract}


\section{Introduction}
Throughout this work, $V$ stands for a real vector space of dimension $n$. 
 We are mainly interested in the classification of 2-solvable Frobenius Lie algebras. It turns out that such a classification is closely related to that of $n$-dimensional maximal Abelian subalgebras (MASAs) of the space $\mathfrak{gl} (V)$ of linear operators (endomorphisms) on $V.$
The sudy of MASAs traces back to Frobenius and Schur, it has been a vibrant  and vivid subject these last decades, in relation with several subjects in mathematics and physics such as the classification of Lie algebras and the study of dynamical systems  
(\cite{de-Olmo-Rodriguez-Wintemitz-Zassenhaus, dixmier,   harish-chandra, jacobson-schur,  kostant, schur,  suprunenko, sugiura, winternitz-zassenhaus, winternitz}). Our interest in $2$-solvable Frobenius Lie algebras is mainly related to their applications in symplectic (\cite{bordemann-medina,  diatta-manga,  diatta-medina, lichnerowicz-medina})   and information geometry (\cite{mbaye-these}), as well as to the investigation of Lie algebras in general. Frobenius Lie algebras have gained popularity mainly since the work of Ooms (\cite{ooms, ooms-primitve-ideals}).   Among other results, a characterization of 2-solvable Frobenius Lie algebras  is given in \cite{ooms} (Theorem 4.1, Section 4, p. 43). On the other hand, the family of 2-solvable Frobenius Lie algebras with an Abelian nilradical is studied in \cite{alvarez-abelian-nilradical}. The main results of the present paper are the following.

\noindent
$\bullet$ We show that, if   $\mathfrak{B}\subset \mathfrak{gl} (V)$ is Abelian  and the semidirect sum $\mathfrak{B}\ltimes V$ is a Frobenius Lie algebra, then $\mathfrak{B}$ is a MASA of  $\mathfrak{gl}(V)$ (Theorem  \ref{thm:structure-2-solvable-Frobenius}). Moreover, every 2-solvable Frobenius Lie algebra is of that form. Two 2-solvable Frobenius Lie algebras $\mathfrak B_1\ltimes V$  and  $\mathfrak B_2\ltimes V$  are isomorphic if and only if   there exists  some  $\phi$ in the group GL$(V)$ of  invertible linear operators of $V$, such that  $\mathfrak B_2=\phi \mathfrak B_1 \phi^{-1} $ (Proposition  \ref{lemma:conjugaison-MASAs}).  

\noindent
$\bullet$ In  Theorem \ref{thm:class2-kravchuk-(n-1)-0-1}, we classify  2-solvable Frobenius Lie algebras $\mathfrak{B}\ltimes V$, where  $\mathfrak{B}\cap \mathfrak{sl}(V) $ is a maximal Abelian nilpotent subalgebra (MANS)  of class 2 and hence of  Kravchuk signature $(n-1,0,1)$. Here $\mathfrak{sl}(V) \subset \mathfrak{gl}(V) $ stands for the subspace of traceless linear operators. 

\noindent
$\bullet$ 
We  fully classify, in Theorem \ref{thm:classification}, the family of $2$-solvable
 Frobenius Lie algebras  $\mathfrak{B}\ltimes V$, where $\mathfrak B$ is the algebra $\mathbb K[\phi]$ of the polynomials in some nonderogatory  $\phi\in\mathfrak{gl}(V)$. We will simply write  $\mathcal G_\phi =\mathbb K[\phi] \ltimes V$ instead of $\mathfrak{B} \ltimes V$.

\noindent
$\bullet$ We supply a full classification list of all 2-solvable Frobenius Lie algebras in general, up to and including dimension $8$ (Theorem \ref{thm:classification-D6}).

\noindent
$\bullet$ Theorems \ref{thm:cartansubalgebras}, \ref{prop:abeliannilrad} give a new characterization of Cartan subalgebras of $\mathfrak{sl}(V).$  

\noindent
$\bullet$ We correct the classification  list of MASAs of $\mathfrak{sl}(4,\mathbb R)$ of  \cite{winternitz-zassenhaus} and further complete it with a missing item. 
 
\noindent $\bullet$  We prove that every nonderogatory real matrix admits a Jordan form (Theorem \ref{thm:Jordanization}).  

The paper is organized as follows. Section \ref{sect:preliminaries},  is devoted to some preliminaries and notations.   In Section \ref{sect:2-step-solvable-Frobenius}, we  discuss the classification of $2$-solvable Frobenius Lie algebras, in general.  General examples of MASAs and 2-solvable Frobenius Lie algebras are discussed in details and a new characterization of Cartan subalgebras of $\mathfrak{sl}(n,\mathbb R)$ is given.
We supply a full classification of $2$-solvable Frobenius Lie algebras  given by nonderogatory maps in Section \ref{chap:classification-nonderogatory}.
 We  fully classify all $2$-solvable Frobenius Lie algebras up to dimension $8$ in Section \ref{sect:6D} and  furthermore, we correct  and complete the classification  list of MASAs of $\mathfrak{sl}(4,\mathbb R)$ of  \cite{winternitz-zassenhaus} with a missing item. 
 The paper ends by 
a simple direct proof of the Jordanization of nonderogatory real  matrices in Appendix (Section \ref{sect:proof-of-Jordanization}). 

\section{Preliminaries}\label{sect:preliminaries}
Throughout this work, if $\mathfrak{F}$ is a vector space, $\mathfrak{F}^*$ will stand for its  (linear) dual, $\mathfrak{gl} (\mathfrak{F})$ the space of linear maps $\psi: \mathfrak{F}\to \mathfrak{F}$ and GL($\mathfrak{F}$) the subspace of  $\mathfrak{gl} (\mathfrak{F})$ consisting of invertible maps. The symmetric bilinear form $\langle,\rangle$ stands for the 
duality pairing between vectors and linear forms: 
 $\langle u,f \rangle: = f(u),$ for $u\in\mathfrak{F},$ $f\in\mathfrak{F}^*$. If $(e_j)$ is a basis of $\mathfrak{F}$, we denote by $(e_j^*)$ its dual basis.
Let $\mathfrak{B}$ be a Lie subalgebra of $\mathfrak{gl}(V)$, where $V$ is a vector space. We consider the natural action $ \mathfrak{B} \times  V \to  V,$ \; $(a,x)\mapsto \rho(a)x:= ax$.  For any $(a,f)\in  \mathfrak{B} \times V^*$, let  $\rho^*(a)f$ be the element of $V^*$  defined on elements $x$ of $V$ by
 $\langle\rho^*(a)f, x\rangle:= -f(ax).$ We recall that $\rho^*$ is called the contragredient representation (or just action) of $\rho$. 
\begin{definition}  
By abuse of language, we say that the orbit of $\alpha\in V^*$, under $\rho^*$, is open, if $V^*=\{\rho^*(a)\alpha, \;\; a\in \mathfrak{B}\}$. 
We say that $\alpha$ has a trivial isotropy  if the equality $\rho^*(a)\alpha=0$ implies $a=0.$  
\end{definition}
The vector space $\mathfrak{B}\oplus V$ endowed with the Lie bracket defined, for every $a,b\in\mathfrak{B}$ and $x,y\in V$, as 
$[a,b]=[a,b]_{\mathfrak{B}}\;,\; [x,y] =0,  \text{ and }  [a,x]=\rho(a)x=ax,$
will be termed the semidirect sum of  $\mathfrak{B}$  and  $V$  via $\rho$ and denoted by $\mathfrak{B}\ltimes_\rho V$, or simply $\mathfrak{B}\ltimes  V$ if no confusion is to be made. In the present work, $\mathfrak{B}$ is Abelian,  $[,]_\mathfrak{B}=0.$

A Lie algebra  $\mathcal G$ is called a Frobenius Lie algebra if there exists $\alpha\in\mathcal G^*$, called a Frobenius functional, such that  $\partial\alpha$ is nondegenerate, 
where, for any $u,v\in\mathcal G,$   
\begin{eqnarray}\label{eq:Frobenius}\partial\alpha(u,v):=-\langle \alpha, [u,v]\rangle.\end{eqnarray}
  Every Frobenius Lie algebra is a codimension 1 subalgebra of some contact Lie algebra and could be used to construct the latter.  The converse is true under some conditions (\cite{Diatta-Contact}). 
Any Lie group $G$ with Lie algebra $\mathcal G$, has a left invariant  symplectic form $\omega^+$ with value $\omega_\epsilon^+=\partial\alpha$ at the unit (neutral) element $\epsilon$ of $G.$ Here $\mathcal G$ is identified with the tangent space to $G$ at $\epsilon.$ 
See for example \cite{diatta-manga}, \cite{diatta-medina}, for more details. 

\begin{definition}A Lie algebra $\mathcal G$  is said to be $2$-step solvable (2-solvable, for short) if its derived ideal $[\mathcal
G,\mathcal G]$ is Abelian. A Lie algebra is said to be indecomposable if it cannot be written 
as the direct sum of two of its ideals. 
\end{definition}
\begin{remark}\label{rmq:indecomposable}
Note that two direct sums  $\mathcal G_1\oplus\dots\oplus \mathcal G_k$ and $\mathcal G_1'\oplus\dots\oplus \mathcal G_k'$  of $2$-solvable Frobenius Lie algebras, are isomorphic if and only if, up to rearranging the order of the indices, each $\mathcal G_j$ is isomorphic to each $\mathcal G_j'.$ So the classification of $2$-solvable Frobenius Lie algebras boils down to that of the indecomposable ones.
\end{remark}
\begin{definition} Let $\mathcal G$ be a Lie algebra, $\mathcal A$ a subalgebra of $\mathcal G.$ The normalizer of $\mathcal A$ in $\mathcal G$ is  $\mathcal N_{\mathcal G}(\mathcal A):=\{a\in \mathcal{G}, [a,\mathcal A]\subset \mathcal A\}.$
 The centralizer of $\mathcal A$ in $\mathcal G$ is cent$(\mathcal A,\mathcal G):=\{a\in \mathcal{G}, [a, \mathcal A]=0\}.$  
We say that $\mathcal A$ is a maximal Abelian subalgebra (MASA) of $\mathcal{G}$, if $\mathcal A=$cent($\mathcal A,\mathcal G).$ Equivalently, $\mathcal A$ is Abelian and is contained in no Abelian subalgebra of higher dimension of $\mathcal{G}.$
 For $\phi\in\mathfrak{gl}(V),$ we use the convention $\phi^0=\mathbb I_{ V}=$ identity map of $V$ and $\phi^{p+1} (x)=\phi( \phi^{p}(x)),$ for every $x\in V$ and $p$ an integer. We let $\mathbb K[\phi]$ stand for the space of polynomials in $\phi$ with coefficients in $\mathbb K$ and denote by $cent(\phi,\mathfrak{gl}(V))$
the set of endomorphisms commuting with $\phi$.
Recall that $\phi$ (resp. an $n\times n$ matrix $M$) is said to be nonderogatory (or cyclic), if its characteristic polynomial $\chi_\phi$ and its minimal polynomial $\chi_{\min,\phi}$ coincide. \end{definition}

\begin{lemma}[see e.g. \cite{bordemann-medina}] \label{nonderogatorymatrices}
Let $V$ be a vector space of dimension $n$ over a field $\mathbb K$ of 
       characteristic zero  and $\phi\in\mathfrak{gl}(V)$.  
 The following assertions are equivalent:
  \item[1)]   there exists $\bar \alpha\in V^*$ such that $(\bar\alpha,\bar\alpha\circ \phi,...,
\bar\alpha\circ \phi^{n-1})$ is a basis of $V^*$,
\item[2)]  there exists $\bar x\in V$ such that $(\bar x,\phi(\bar x),..., \phi^{n-1}(\bar x))$ is a basis of
$V$,
\item[3)] $\dim(cent(\phi,\mathfrak{gl}(V)))=n$ and $cent(\phi,\mathfrak{gl}(V))$ is commutative,
\item[4)] $cent(\phi,\mathfrak{gl}(V))=\mathbb K[\phi]$,
\item[5)] the characteristic and the minimal polynomials of $\phi$
are the same,
\item[6)] In every extension $\bar{ \mathbb K}$  of $\mathbb K$
where $\phi$ admits a Jordan form, this latter has only one Jordan bloc for
each eigenvalue.
\end{lemma}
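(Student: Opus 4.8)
The plan is to prove the six equivalences through a short cycle of implications, taking 5) (equivalently, its Jordan-theoretic reformulation 6)) as the pivot, and treating separately the duality between 1) and 2) and the two ``centralizer'' statements 3) and 4). Throughout I would rely on two elementary but essential facts: that $\mathbb{K}[\phi]\subseteq cent(\phi,\mathfrak{gl}(V))$ with $\dim_{\mathbb K}\mathbb{K}[\phi]=\deg\chi_{\min,\phi}$, and that the minimal and characteristic polynomials of $\phi$ are unchanged under any field extension (being computed from linear dependence among $\mathbb{I}_V,\phi,\phi^2,\dots$ and from the entries of $\phi$, i.e. from linear data with coefficients in $\mathbb K$). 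The latter lets me reduce 5) $\Leftrightarrow$ 6) to a Jordan block count over a splitting field $\bar{\mathbb K}$: there $\chi_\phi=\prod_\lambda (t-\lambda)^{a_\lambda}$ with $a_\lambda$ the sum of the sizes of the Jordan blocks attached to $\lambda$, while $\chi_{\min,\phi}=\prod_\lambda(t-\lambda)^{b_\lambda}$ with $b_\lambda$ the size of the \emph{largest} such block; hence $\chi_\phi=\chi_{\min,\phi}$ holds iff $a_\lambda=b_\lambda$ for every $\lambda$, i.e. exactly one block per eigenvalue, which is precisely 6).

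For the core equivalence 2) $\Leftrightarrow$ 5) I would argue as follows. The implication 2) $\Rightarrow$ 5) is immediate: if $(\bar x,\phi\bar x,\dots,\phi^{n-1}\bar x)$ is a basis, then no polynomial in $\phi$ of degree $<n$ can annihilate $\bar x$, so $\deg\chi_{\min,\phi}\ge n$; since $\chi_{\min,\phi}\mid\chi_\phi$ and both are monic of degree $\le n$, they coincide. The converse 5) $\Rightarrow$ 2) is the substantive point and I expect it to be the main obstacle, as it amounts to producing a cyclic vector. I would establish it by the classical ``maximal order'' argument in the $\mathbb{K}[t]$-module $V$: writing $\chi_{\min,\phi}=\prod_i p_i^{e_i}$ with the $p_i$ distinct and irreducible, one finds for each $i$ a vector whose annihilator is exactly $(p_i^{e_i})$, and then, using that the $p_i^{e_i}$ are pairwise coprime, shows that a suitable sum of these vectors has annihilator exactly $(\chi_{\min,\phi})$. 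When $\deg\chi_{\min,\phi}=n$, such a vector $\bar x$ makes $\bar x,\phi\bar x,\dots,\phi^{n-1}\bar x$ linearly independent, hence a basis, giving 2).

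The duality 1) $\Leftrightarrow$ 2) I would obtain for free: condition 1) is exactly condition 2) applied to the transpose $\phi^{*}\in\mathfrak{gl}(V^*)$ defined by $\phi^{*}(f)=f\circ\phi$ (so that $\bar\alpha\circ\phi^{k}=(\phi^{*})^{k}\bar\alpha$), and since $\phi$ and $\phi^{*}$ share the same characteristic and minimal polynomials, the already-proven 2) $\Leftrightarrow$ 5) applied to both maps closes the loop. It remains to incorporate 3) and 4). I would prove 2) $\Rightarrow$ 4) directly: if $\bar x$ is cyclic and $\psi$ commutes with $\phi$, then $\psi$ is determined by $\psi(\bar x)$, which can be written as $p(\phi)\bar x$ for some polynomial $p$; since $\psi$ and $p(\phi)$ agree on the generating set $\{\phi^{k}\bar x\}$ they coincide, so $cent(\phi,\mathfrak{gl}(V))\subseteq\mathbb{K}[\phi]$ and equality holds. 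For the last two implications I would invoke the standard fact (via the rational canonical / invariant-factor decomposition) that $\dim_{\mathbb K}cent(\phi,\mathfrak{gl}(V))\ge n$, with equality exactly when $\phi$ is cyclic: then 4) $\Rightarrow$ 3) follows since $\dim_{\mathbb K}\mathbb{K}[\phi]=\deg\chi_{\min,\phi}\le n$ forces the common dimension to be $n$ (and $\mathbb{K}[\phi]$ is automatically commutative), while 3) $\Rightarrow$ 2) follows since $\dim_{\mathbb K}cent(\phi,\mathfrak{gl}(V))=n$ forces cyclicity. This closes all the equivalences; the only genuinely nontrivial inputs are the maximal-order construction for 5) $\Rightarrow$ 2) and the centralizer-dimension lower bound used for 3) and 4), everything else being bookkeeping.
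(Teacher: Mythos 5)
Your proposal is correct, but there is nothing in the paper to compare it against: the paper states this lemma as a known result, citing \cite{bordemann-medina}, and gives no proof of its own. Your cycle of implications does close all six equivalences. The direct steps 2)$\Rightarrow$5) (a cyclic vector forces $\deg\chi_{\min,\phi}=n$) and 2)$\Rightarrow$4) (an operator commuting with $\phi$ is determined by its value on a cyclic vector, which is $p(\phi)\bar x$ for some polynomial $p$) are sound; the substantive implication 5)$\Rightarrow$2) via the primary decomposition of $\chi_{\min,\phi}$ and the construction of a vector of maximal order is the standard and correct way to produce a cyclic vector; the duality 1)$\Leftrightarrow$2) through the transpose map $\phi^{*}$, which has the same characteristic and minimal polynomials as $\phi$, is clean; and your treatment of 5)$\Leftrightarrow$6) correctly rests on the invariance of both polynomials under field extension (linear independence of $\mathbb I_V,\phi,\dots,\phi^{d-1}$ is a rank condition, hence unchanged over $\bar{\mathbb K}$), reducing 6) to the block count over a splitting field. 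The one caveat is that 4)$\Rightarrow$3) and 3)$\Rightarrow$2) are outsourced to the Frobenius centralizer-dimension theorem, namely that if the invariant factors of $\phi$ have degrees $d_1,\dots,d_k$ then $\dim cent(\phi,\mathfrak{gl}(V))=\sum_{i,j}\min(d_i,d_j)\ge n$, with equality exactly when $k=1$; this is a genuine classical result proved independently via the rational canonical form, so your appeal to it is not circular, but it is the one ingredient whose proof is comparable in depth to the lemma itself, and it is the piece you would need to write out for a fully self-contained argument.
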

In Theorem \ref{thm:Jordanization},  we go beyond Lemma \ref{nonderogatorymatrices} to prove that, for  every $n$-dimensional real vector space $V,$ every nonderogatory $\phi\in\mathfrak{gl}(V)$ admits a Jordan form.
 Throughout this work, $E_{i,j}$ stands for the $n\times n$ matrix with zero in all entries except the $(i,j)$ entry which is equal to $1$. 
 
\section{On the structure of $2$-solvable Frobenius Lie algebras and $n$-dimensional MASAs}\label{sect:2-step-solvable-Frobenius}

\subsection{MASAs of $\mathfrak{gl}(V)$ and maximal Abelian subgroups of $GL(V)$ }\label{chap:correspondence-MASA-maximalAbeliansubgroup}

In this section, we briefly discuss the 1-1 correspondence between
MASAs of $\mathfrak{gl}(V)$ and maximal Abelian subgroups of $GL(V)$. 
Let $\mathfrak{B}$ be a MASA of $\mathfrak{gl}(V).$ Then  $\mathfrak{B}$ is a commutative  subalgebra of the associative algebra $\mathfrak{gl}(V)$,  and it contains the unit $\mathbb I_V$. Hence its (open) subset $U(\mathfrak{B})$  consisting of invertible elements is an analytic Lie group and $\dim U(\mathfrak{B})= \dim \mathfrak{B}$, (see e.g. J. P. Serre \cite{serre}, p. 103, for the general case)
 which, by definition,  is also an Abelian Lie subgroup of GL(V). Furthermore, since  $\mathfrak{B}$ is a MASA of $\mathfrak{gl}(V)$, it contains the exponential of all its elements. Thus $\exp \mathfrak{B}$ coincides with the connected component of  the unit of $U(\mathfrak{B})$.   
In fact,  $U(\mathfrak{B})$ is a maximal Abelian subgroup of $GL(V)$. Conversely, every maximal Abelian subgroup of $GL(V)$ is of that form $U(\mathfrak{B})$, for some MASA $\mathfrak{B}$ of $\mathfrak{gl}(V)$. Two such subgroups are conjugate in $GL(V)$ if and only if the corresponding MASAs are conjugate in $\mathfrak{gl}(V)$ (see \cite{suprunenko}).  So we deduce that a simple way to construct a 2-solvable Lie group with Lie algebra $\mathfrak{B}\ltimes V$ is to simply take 
$U(\mathfrak{B})\ltimes V.$

\subsection{Main result on MASAs and 2-solvable Frobenius Lie algebras}

Let  $V$ be a vector space of dimension $n$ and $\mathfrak{B}$ an  $n$-dimensional Abelian Lie subalgebra of $\mathfrak{gl}(V).$ Then  $\mathfrak{B} \ltimes V$ is a Frobenius Lie algebra if and only if   $\mathfrak{B}$ has an open orbit in $V^*$ and every 2-solvable Frobenius Lie algebra is of that form (\cite{diatta-manga-mbaye-gerstenhaber}). 
Equivalently,  the Lie group $U(\mathfrak{B})$ of invertible elements of $\mathfrak{B}$, has  some open or dense orbit in $V^*$. 
Here we prove the following.

\begin{theorem} 
\label{thm:structure-2-solvable-Frobenius}
 Let  $V$ be a vector space of dimension $n$ and $\mathfrak{B}$ an  $n$-dimensional Abelian Lie subalgebra of $\mathfrak{gl}(V).$ Suppose  $\mathfrak{B} \ltimes V$ is a Frobenius Lie algebra. Then  $\mathfrak{B}$ is a  
MASA 
of $\mathfrak{gl}(V)$. The Lie group $U(\mathfrak{B})$ of invertible elements of $\mathfrak{B}$, is a  maximal Abelian subgroup of $\text{GL}(V).$   
Conversely, every 2-solvable Frobenius Lie algebra is isomorphic to one as above.
\end{theorem}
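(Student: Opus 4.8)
The plan is to prove the three assertions in turn, putting essentially all of the work into showing that $\mathfrak{B}$ is a MASA; the statement about $U(\mathfrak{B})$ and the converse will then follow from material already recorded. First I would unwind the Frobenius hypothesis into a statement about the contragredient action. Writing a Frobenius functional as $\alpha=\alpha_{\mathfrak{B}}+\bar\alpha$ with $\alpha_{\mathfrak{B}}\in\mathfrak{B}^{*}$ and $\bar\alpha\in V^{*}$, and using that $\mathfrak{B}$ and $V$ are abelian, the form $\partial\alpha$ vanishes on $\mathfrak{B}\times\mathfrak{B}$ and on $V\times V$, so its only nonzero block is
\[
\partial\alpha(a,x)=-\bar\alpha(ax)=\langle\rho^{*}(a)\bar\alpha,x\rangle,\qquad a\in\mathfrak{B},\ x\in V .
\]
Nondegeneracy of $\partial\alpha$ is then equivalent to this pairing being nondegenerate in both slots, i.e.\ to the linear map $\lambda:\mathfrak{B}\to V^{*}$, $a\mapsto\rho^{*}(a)\bar\alpha$, being injective (trivial isotropy of $\bar\alpha$) and surjective (open orbit of $\bar\alpha$). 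Since $\dim\mathfrak{B}=\dim V^{*}=n$, either property forces the other, so $\lambda$ is a linear isomorphism. This is the precise reformulation I will use, and it coincides with the open-orbit criterion recalled before the statement.

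The heart of the argument is then showing $\mathfrak{B}=\mathrm{cent}(\mathfrak{B},\mathfrak{gl}(V))$. Let $\psi\in\mathfrak{gl}(V)$ centralize $\mathfrak{B}$. Because $\lambda$ is onto, there is a $b\in\mathfrak{B}$ with $\rho^{*}(\psi)\bar\alpha=\rho^{*}(b)\bar\alpha$; put $\eta:=\psi-b$, which still centralizes $\mathfrak{B}$ and satisfies $\rho^{*}(\eta)\bar\alpha=0$. Since $\rho^{*}$ is a Lie algebra homomorphism, $[\rho^{*}(\eta),\rho^{*}(a)]=\rho^{*}([\eta,a])=0$ for every $a\in\mathfrak{B}$, so $\rho^{*}(\eta)$ commutes with all the operators $\rho^{*}(a)$. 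Taking an arbitrary $f\in V^{*}$ and writing $f=\rho^{*}(a)\bar\alpha$ by surjectivity of $\lambda$,
\[
\rho^{*}(\eta)f=\rho^{*}(\eta)\rho^{*}(a)\bar\alpha=\rho^{*}(a)\rho^{*}(\eta)\bar\alpha=0 .
\]
Hence $\rho^{*}(\eta)=0$ on $V^{*}$, which by definition of the contragredient action means $f(\eta x)=0$ for all $f\in V^{*}$, $x\in V$, so $\eta=0$ and $\psi=b\in\mathfrak{B}$. Thus $\mathrm{cent}(\mathfrak{B},\mathfrak{gl}(V))\subseteq\mathfrak{B}$, and with the reverse inclusion (which is automatic as $\mathfrak{B}$ is abelian) we conclude that $\mathfrak{B}$ is a MASA of $\mathfrak{gl}(V)$. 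I expect this collapse of the centralizer to be the only genuinely delicate point; the things to be careful about are that $\rho^{*}$ is a representation (so that centralizing $\mathfrak{B}$ in $\mathfrak{gl}(V)$ translates into commuting with the $\rho^{*}(a)$), and that surjectivity of $\lambda$ is used twice — once to produce $b$, and once to sweep out all of $V^{*}$.

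For the second assertion I would simply invoke the correspondence established in Subsection~\ref{chap:correspondence-MASA-maximalAbeliansubgroup}: to a MASA $\mathfrak{B}$ of $\mathfrak{gl}(V)$ one associates the abelian Lie group $U(\mathfrak{B})$ of its invertible elements, and this is a maximal abelian subgroup of $GL(V)$ (cf.\ \cite{suprunenko}). As $\mathfrak{B}$ has just been shown to be a MASA, $U(\mathfrak{B})$ is maximal abelian in $GL(V)$ with no further work. Finally, for the converse I would appeal to the structural result quoted before the statement (\cite{diatta-manga-mbaye-gerstenhaber}): any $2$-solvable Frobenius Lie algebra $\mathcal{G}$ is isomorphic to a semidirect sum $\mathfrak{B}\ltimes V$ in which $V=[\mathcal{G},\mathcal{G}]$ is the $n$-dimensional abelian derived ideal and $\mathfrak{B}$ is an $n$-dimensional abelian subalgebra of $\mathfrak{gl}(V)$ with an open orbit in $V^{*}$; applying the forward direction just proved, such a $\mathfrak{B}$ is automatically a MASA, so $\mathcal{G}$ is of the claimed form. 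If one prefers a self-contained argument, the extra ingredients to verify are that the adjoint action of a complement on the abelian ideal $V$ is faithful (again equivalent to triviality of the coadjoint stabilizer of $\alpha$, i.e.\ to the Frobenius condition) and that a genuinely abelian complement $\mathfrak{B}$ to $V$ can be chosen; establishing the existence of this abelian complement, together with the dimension count $\dim V=\tfrac12\dim\mathcal{G}$, is the part I would expect to require the most care.
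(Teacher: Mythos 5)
Your proposal is correct and follows essentially the same route as the paper: both reduce the Frobenius condition to the orbital map $a\mapsto\rho^{*}(a)\bar\alpha$ being a linear isomorphism $\mathfrak{B}\to V^{*}$, both use commutativity of the centralizing element with $\mathfrak{B}$ to transport the vanishing of the functional and conclude the element lies in $\mathfrak{B}$, and both delegate the group statement to \cite{suprunenko} and the converse to \cite{diatta-manga-mbaye-gerstenhaber}. Your coordinate-free version (set $\eta:=\psi-b$ and show $\rho^{*}(\eta)=0$ by commuting it past $\rho^{*}(a)$) is simply a cleaner packaging of the paper's dual-basis computation with the auxiliary $(n+1)$-dimensional algebra $\tilde{\mathfrak{B}}$ and its kernel-dimension argument.
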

\begin{proof} Without loss of generality, set $V=\mathbb K^n.$ 
The  Lie algebra  $\mathfrak{B} \ltimes \mathbb K^n$ is Frobenius if and only if the action $ \mathfrak{B}\times (\mathbb K^n)^* \to  (\mathbb K^n)^*$,
 $(a,f) \mapsto - f\circ a$, has an open orbit (\cite{diatta-manga-mbaye-gerstenhaber}).   Consider some $\alpha\in  (\mathbb K^n)^*$ with an open orbit. Thus, any basis $(a_1,\dots,a_n)$ of $ \mathfrak{B}$ gives rise to a basis  $( \rho^*(a_1)\alpha, \dots, \rho^*(a_n)\alpha)$ of $(\mathbb K^n)^*$ and the (linear) orbital map $Q: \mathfrak{B} \to  (\mathbb K^n)^*$, $Q(a) = \rho^* (a)\alpha$ is an isomorphism between the vector spaces  $\mathfrak{B}$ and $(\mathbb K^n)^*$. Just for convenience, we will let $(\hat e_1,\dots,\hat e_n)$ stand for the basis of $\mathbb K^n$ whose dual basis is $(\hat e_1^*:=\rho^*(a_1)\alpha, \dots, \hat e_n^*:=\rho^*(a_n)\alpha).$
Suppose $\tilde a\in\mathfrak{gl}(n,\mathbb K)$ is such that $[\tilde a, a] =0$ for any $a\in \mathfrak{B}$ and $\tilde a\neq 0$. Assume $\tilde a$ is not an element of 
 $\mathfrak{B}$, then  $ \mathfrak{\tilde B}:= \mathbb K\tilde a\oplus \mathfrak{B}$ is an $(n+1)$-dimensional Abelian subalgebra of $\mathfrak{gl}(n,\mathbb K)$. Because $\dim \mathfrak{\tilde B} = \dim   (\mathbb K^n)^*+1$, the orbital map $\tilde Q:\mathfrak{\tilde B} \to (\mathbb K^n)^*$,  also given by $\tilde Q(a) = \tilde \rho^* (a)\alpha:= -\alpha\circ a$, must have a $1$-dimensional kernel. So, there exists some 
$\tilde b=k\tilde a+a_0\neq 0$, with $k\in\mathbb K$ and $a_0\in \mathfrak{B}$ such that $\tilde \rho^* (\tilde b)\alpha= -\alpha\circ \tilde b =0.$  Since $\tilde \rho^* (a_0)\alpha= \rho^* (a_0)\alpha=Q(a_0) \neq 0$ if $a_0\neq 0$, we must have $k\neq 0$. 
We then must have $ \tilde b x=0$, for any $x\in\mathbb K^n,$ or equivalently $\tilde b =0.$ Indeed, for any $x\in\mathbb K^n$, the expression of $\hat b x$ in the above basis is $\hat b x=\sum\limits_{j=1}^n  \langle \hat e_j^*,\tilde b x\rangle \hat e_j.$ But the components $\langle \hat e_j^*,\tilde b x\rangle $ are all egal to zero for any $j=1,\dots,n$, since the following holds true
\begin{eqnarray}\langle \hat e_j^*,\tilde b x\rangle= \langle\rho^* (a_j)\alpha, \tilde b x\rangle =
- \langle\alpha, a_j \tilde b x\rangle  = - \langle\alpha, \tilde b  a_j x\rangle =  \langle\tilde \rho^* (\tilde b) \alpha,   a_j x\rangle =0. \nonumber\end{eqnarray}
Now, the equality $\tilde b=k\tilde a+a_0 =0$,  contradicts the assumption that $\tilde a$ is not in $\mathfrak{B}$. So $\tilde a$ must necessary be in $\mathfrak{B}$. This proves that $\mathfrak{B}$ is a MASA of $\mathfrak{gl}(n,\mathbb K)$. 
 Due to the correspondence between MASAs of $\mathfrak{gl}(n,\mathbb K)$ and maximal Abelian subgroups of GL($n,\mathbb K$), see \cite{suprunenko} and Section \ref{chap:correspondence-MASA-maximalAbeliansubgroup}, $\mathfrak{B}$ is a MASA if and only if $U(\mathfrak{B})$ is a maximal Abelian subgroup of GL$(n,\mathbb K).$
\end{proof}
\begin{remark}\label{rmq:nonderogatory}
From Lemma \ref{nonderogatorymatrices}, if $\phi\in  \mathfrak{gl}(V)$ is nonderogatory, then $\mathbb K[\phi]$ is an $n$-dimensional MASA of $\mathfrak{gl}(V),$ there is $\bar \alpha\in V^*$ with an open orbit for the action of $\mathbb K[\phi]$. The 2-solvable Lie algebra $\mathcal G_\phi:=\mathbb K[\phi]\ltimes V$ is a Frobenius Lie algebra (Theorem \ref{thm:structure-2-solvable-Frobenius}).  Frobenius  Lie algebras of type $\mathcal G_\phi$ are studied in details and fully classified in Section \ref{chap:classification-nonderogatory}. Obviously, if  $\phi,\psi\in \mathfrak{gl}(V)$  satisfy $\phi=P\psi P^{-1}$, for some $P\in GL(V),$ then $\mathbb K[\phi]$ and $\mathbb K[\psi]$ are conjugate,  $\mathcal G_\phi$ and $\mathcal G_\psi$  
are isomorphic  via the map  $\xi:\mathcal G_\psi \to \mathcal G_\phi$,  $\xi(a)=PaP^{-1}$, $ \xi(x)=Px$, for any $a\in \mathbb K[\psi]$ and 
$x \in V.$
\end{remark}

\begin{proposition} 
\label{lemma:conjugaison-MASAs} Let $V$ be a vector space and $\mathfrak B_1,$ $\mathfrak B_2$ two  MASAs of  $\mathfrak{gl}(V),$ such that  $\dim V=\dim\mathfrak B_1=\dim\mathfrak B_2.$ Then
$\mathfrak B_1\ltimes V$  and $\mathfrak B_2\ltimes V$  are isomorphic if and only if   $\mathfrak B_1$ and $\mathfrak B_2$ are  conjugate.
A linear map $\psi:\mathfrak B_1\ltimes V\to \mathfrak B_2\ltimes V$ is an isomorphism if and only if there exists $(\phi_\psi, x_\psi)\in GL(V)\times V$ 
such that $ \phi_\psi\circ \mathfrak{B}_1 \circ \phi_\psi^{-1}=\mathfrak{B}_2$ and for any $(a,x)\in \mathfrak B_1\ltimes V,$
$\psi (a,x) =  (\phi_\psi\circ a \circ \phi_\psi^{-1} \;,\; \phi_\psi\circ a \circ \phi_\psi^{-1}(x_\psi) +\phi_\psi(x) ).$    
 \end{proposition}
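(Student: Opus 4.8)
The plan is to exploit the fact that, since every MASA of $\mathfrak{gl}(V)$ contains the identity $\mathbb I_V$ (as recalled in Section \ref{chap:correspondence-MASA-maximalAbeliansubgroup}), the subspace $V$ is intrinsically determined inside each $\mathfrak B_i\ltimes V$ as its derived ideal. Indeed, the only nontrivial brackets are $[a,x]=ax$, so $[\mathfrak B_i\ltimes V,\mathfrak B_i\ltimes V]$ is the span of $\{ax:a\in\mathfrak B_i,\ x\in V\}$, and $\mathbb I_V\in\mathfrak B_i$ forces $\mathbb I_V\cdot x=x$, whence this derived ideal equals $V$. Consequently any Lie algebra isomorphism $\psi:\mathfrak B_1\ltimes V\to\mathfrak B_2\ltimes V$ must carry derived ideal onto derived ideal, i.e. $\psi(V)=V$, and I would set $\phi_\psi:=\psi|_V$, which is then a linear bijection of $V$, so $\phi_\psi\in GL(V)$.

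Next I would read off the two pieces of $\psi$ from the bracket relations. Writing $\psi(a,0)=(\bar a, w(a))$ with $\bar a\in\mathfrak B_2$ and $w(a)\in V$ (both depending linearly on $a$), the identity $\psi([(a,0),(0,x)])=[\psi(a,0),\psi(0,x)]$ reduces, since $[w(a),\phi_\psi x]=0$ in $V$, to $\phi_\psi(ax)=\bar a\,\phi_\psi(x)$ for all $x\in V$; hence $\bar a=\phi_\psi\circ a\circ\phi_\psi^{-1}$. Letting $a$ range over $\mathfrak B_1$ shows $\phi_\psi\,\mathfrak B_1\,\phi_\psi^{-1}\subseteq\mathfrak B_2$, and since both sides are $n$-dimensional MASAs this is an equality. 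This already extracts the conjugacy $\mathfrak B_2=\phi_\psi\mathfrak B_1\phi_\psi^{-1}$.

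The one nonobvious point, which I expect to be the crux, is to pin down the translational part $w$. Transporting $w$ to a linear map $\tilde w:\mathfrak B_2\to V$ by $\tilde w(\bar a):=w(a)$ (well defined because $a\mapsto\bar a$ is a linear isomorphism), the relation $\psi([(a,0),(b,0)])=[\psi(a,0),\psi(b,0)]$, using $[\bar a,\bar b]=0$ in $\mathfrak B_2$, collapses to the cocycle-type condition $\bar a\,\tilde w(\bar b)=\bar b\,\tilde w(\bar a)$ for all $\bar a,\bar b\in\mathfrak B_2$. This is exactly where $\mathbb I_V\in\mathfrak B_2$ does the work: specializing $\bar b=\mathbb I_V$ gives $\bar a\,\tilde w(\mathbb I_V)=\tilde w(\bar a)$, so putting $x_\psi:=\tilde w(\mathbb I_V)$ yields $w(a)=\bar a\,x_\psi=\phi_\psi\circ a\circ\phi_\psi^{-1}(x_\psi)$. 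Together with $\psi(0,x)=(0,\phi_\psi x)$ this is precisely the asserted formula.

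Finally I would settle the converse by direct verification: given $(\phi_\psi,x_\psi)$ with $\phi_\psi\mathfrak B_1\phi_\psi^{-1}=\mathfrak B_2$, the map defined by the displayed formula is linear and bijective, its block form over $\mathfrak B\oplus V$ being triangular with invertible diagonal blocks (the conjugation isomorphism on $\mathfrak B_1$ and $\phi_\psi$ on $V$); a short expansion of both sides, again using $[\bar a,\bar b]=0$, shows it preserves brackets. The stated equivalence, that $\mathfrak B_1\ltimes V$ and $\mathfrak B_2\ltimes V$ are isomorphic if and only if $\mathfrak B_1$ and $\mathfrak B_2$ are conjugate, then follows at once: one direction is the conjugacy extracted in the first two paragraphs, and for the other one takes $x_\psi=0$, giving the isomorphism $(a,x)\mapsto(\phi_\psi a\phi_\psi^{-1},\phi_\psi x)$.
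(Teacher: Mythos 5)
Your proposal is correct and follows essentially the same route as the paper's own proof: identify $V$ as the derived ideal, decompose $\psi$ into blocks, extract the conjugation $\bar a=\phi_\psi\circ a\circ\phi_\psi^{-1}$ from the bracket $[a,x]$, and pin down the translational part by evaluating the commutation identity at $\mathbb I_V\in\mathfrak B_1$. Your explicit justification that the derived ideal equals $V$ (via $\mathbb I_V$ belonging to a MASA) is a small point the paper leaves implicit, but otherwise the two arguments coincide.
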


\begin{proof}Suppose $\mathcal G_1:=\mathfrak B_1\ltimes V$ and $\mathcal G_2:=\mathfrak B_2\ltimes V$ are isomorphic and $\psi:\mathcal G_1\to\mathcal G_2$ is an isomorphism.
As the derived ideal $[\mathcal G_1,\mathcal G_1]=V$ must be mapped to $[\mathcal G_2,\mathcal G_2]=V$, 
there are linear maps 
$\psi_{1,1}:\mathfrak B_1\to\mathfrak B_2$,  $\psi_{1,2}:\mathfrak B_1\to V$,  $\phi_\psi:V \to V$, with $\psi_{1,1}$ and $\phi_\psi$ invertible, such that 
 $\psi(a)=\psi_{1,1}(a) +\psi_{1,2} (a)$ and $\psi(x)=\phi_\psi(x)$ for any $a\in\mathfrak B_1$, $x\in V$. 
We deduce the equality $\psi_{1,1}(a)=\phi_\psi \circ a \circ \phi_\psi^{-1}$ from the following 
\beqn 
\phi_\psi(ax)=\psi([a,x])= [\psi_{1,1} (a)+\psi_{1,2}(a),\phi_\psi(x)] = \Big(\psi_{1,1} (a)\circ\phi_\psi\Big)(x). \nonumber
\eeqn 

In particular $\phi_\psi \circ a \circ \phi_\psi^{-1}\in\mathfrak B_2$, for any $a\in\mathfrak B_1$, equivalently $\phi_\psi \circ \mathfrak B_1 \circ \phi_\psi^{-1}= \mathfrak B_2$. 
Now set  $x_\psi:=\psi_{1,2}(b)$, with $b=\mathbb I_{V} \in \mathfrak B_1$. We have  $\psi_{1,1} (\mathbb I_{V}) =\mathbb I_{V}$, since $\psi$ is an isomorphism. The equalities
\beqn 
0=\psi([a,b])= [\psi (a),\psi(b))] 
=\psi_{1,1} (a)\psi_{1,2}(b)- \psi_{1,1} (b) \psi_{1,2}(a)\nonumber
\eeqn
 yield $\psi_{1,2} (a)=\phi_\psi\circ a\circ\phi_\psi^{-1}x_\psi,$
 for any $a\in\mathfrak B_1.$ 
Conversely, suppose there exists some $\phi\in GL(V)$ such that $\mathfrak B_2=\phi\mathfrak B_1\phi^{-1}$. Then  the map $\psi:\mathcal G_1\to\mathcal G_2$  given for any $a\in\mathfrak{B}_1$ and $x\in V$ by  $\psi (a+x) =  \phi\circ a \circ \phi^{-1}+ \phi\circ a \circ \phi^{-1}(x_0) +\phi(x)  $ is an isomorphism, 
for any $ x_0\in V.$
\end{proof}

\subsection{Different types of MASAs} 

A MASA of $\mathfrak{gl}(V)$ is said to be decomposable, if $V$ can be written as the direct sum of subspaces which are all preserved by the MASA. Otherwise, the MASA is said to be indecomposable.
Every MASA  of $\mathfrak{gl}(V)$ is a direct sum of $\mathbb K\mathbb I_{V}$ and a  MASA of $\mathfrak{sl}(V)$. Without loss of generality, we will sometimes simply take $V=\mathbb K^n.$ 
A MASA of $\mathfrak{sl}(n,\mathbb R)$ is decomposable into a direct sum of indecomposable 
MASAs. The latter can either be absolutely indecomposable
(AID), or indecomposable but not absolutely indecomposable (ID $\&$ NAID). 
The NAID ones  become decomposable after complexification. See e.g. \cite{ndogmo-winternitz}.
Let us recall that a set of matrices is
decomposable if and only if  it commutes with some idempotent $Z\neq \mathbb I_{\mathbb K^n}=:\mathbb I_n$.
Further recall that, an idempotent of a $\mathbb K$-algebra, is an element $Z$ satisfying $Z^2=Z.$
The NAID MASAs of $\mathfrak{sl}(n, \mathbb R)$ are those MASAs all of whose elements commute with $M_s,$ given in (\ref{Ms-Mn-M01}). Upon complexification, $M_s$ becomes $M_{\mathbb C}:=$diag($i\mathbb I_{\frac{n}{2}}$, $-i\mathbb I_{\frac{n}{2}}$), so that $Z:=\frac{1}{2}(\mathbb I_{n}+ iM_{\mathbb C})$ and $Z':=\frac{1}{2}(\mathbb I_{n}- iM_{\mathbb C})$ are idempotents that commute with the elements of the NAID MASAs. 
They
exist only for $n$ even and have the form 
$\mathbb RM_s\oplus$MASA($\mathfrak{sl}(\frac{n}{2}, \mathbb C)$),  and  in our case, we only need 
MASAs of $\mathfrak{sl}(\frac{n}{2}, \mathbb C)$ of real dimension $n-2.$ An indecomposable MASA of $\mathfrak{sl}(n, \mathbb C)$ is always a maximal Abelian nilpotent subalgebra (MANS). 
The AID MASAs are MANS and they remain indecomposable after field extension.  
A MANS is represented by nilpotent matrices in any finite-dimensional
representation.  Following \cite{de-Olmo-Rodriguez-Wintemitz-Zassenhaus}, a MANS $\mathcal A$ of   $\mathfrak{sl}(n, \mathbb K),$ $\mathbb K=\mathbb R$ or $\mathbb C,$ is characterized by a Kravchuk signature $(\nu,m,\mu),$ where $1\leq \nu=n-\dim_{\mathbb K}\mathcal A\mathbb K^n$,  
$1\leq \mu=\dim\ker\mathcal{A}$, where $\ker\mathcal{A}:= \{x\in\mathbb K^n, ax=0, \; \forall a\in\mathcal A\}$  and  $m=n-\mu-\nu.$

\begin{definition}We say that $\mathcal A$ is of class $p\ge 2$ if $a_1a_2\cdots a_p=0$ for any  p elements $a_1,\dots,a_p\in\mathcal A$ and there exists some  $a\in\mathcal A$ such that $a^{p-1}\neq 0.$ We simply write $\mathcal A^{p-1}\neq 0$ and  $\mathcal A^p=0.$ Set Im($\mathcal A^k$):$=\mathcal A^k\mathbb K^n,$ $k\ge 1.$\end{definition}

\subsection{Classification of 2-solvable Frobenius Lie algebras corresponding  to class 2 MANS}

In this section, we treat one of the extreme cases, namely the case of MANS of class 2. The other extreme case, the class $n$, will be treated together with the more general case of  MASAs given by nonderogatory maps, in Section \ref{chap:classification-nonderogatory}.
\begin{lemma}\label{2-nilp-MASA}
Let $\mathcal A$ be an $(n-1)$-dimensional MASA of $\mathfrak{sl}(n,\mathbb R)$ such that $\mathcal A^{p+1}=0$ and $\mathcal A^{p}\neq 0,$ for some $p\ge 1.$ If $\mathcal G:=(\mathbb R \mathbb I_{\mathbb R^n}\oplus \mathcal A)\ltimes \mathbb R^n$ is a Frobenius Lie algebra, then $\dim$\;Im($\mathcal A^{p}$)$=1.$
In particular if $\mathcal A^2=0$  and $(\mathbb R \mathbb I_{\mathbb R^n}\oplus \mathcal A)\ltimes \mathbb R^n$ is Frobenius, then $\nu=n-1$ and $\dim Im(\mathcal A)=1$.
\end{lemma}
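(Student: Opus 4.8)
The plan is to reduce the Frobenius hypothesis to a single linear functional and to test it against the subalgebra $\mathcal{A}$ sitting inside $\mathfrak{B}:=\mathbb{R}\mathbb{I}_{\mathbb{R}^n}\oplus\mathcal{A}$. Note first that $\mathfrak{B}$ is an $n$-dimensional Abelian subalgebra of $\mathfrak{gl}(\mathbb{R}^n)$, so the criterion recalled before Theorem~\ref{thm:structure-2-solvable-Frobenius} applies: $\mathcal{G}$ is Frobenius if and only if there is some $\alpha\in(\mathbb{R}^n)^*$ whose orbital map $Q:\mathfrak{B}\to(\mathbb{R}^n)^*$, $Q(a)=\rho^*(a)\alpha=-\alpha\circ a$, is a linear isomorphism. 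All I shall really use is that $Q$ is injective (trivial isotropy). First I would restrict $Q$ to the subspace $\mathcal{A}\subset\mathfrak{B}$: injectivity persists, so the image $\rho^*(\mathcal{A})\alpha=\{-\alpha\circ a:a\in\mathcal{A}\}$ is a subspace of $(\mathbb{R}^n)^*$ of dimension exactly $\dim\mathcal{A}=n-1$.

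The heart of the argument is then to confine this $(n-1)$-dimensional space inside the annihilator of $W:=\mathrm{Im}(\mathcal{A}^{p})$. The key observation is the product bound: for $a\in\mathcal{A}$ and $x\in W=\mathcal{A}^{p}\mathbb{R}^n$ one has $ax\in\mathcal{A}\cdot\mathcal{A}^{p}\mathbb{R}^n=\mathcal{A}^{p+1}\mathbb{R}^n=0$, by the hypothesis $\mathcal{A}^{p+1}=0$. Hence each form $-\alpha\circ a$ with $a\in\mathcal{A}$ vanishes on $W$, that is, $\rho^*(\mathcal{A})\alpha\subseteq W^{\perp}$, the annihilator of $W$ in $(\mathbb{R}^n)^*$. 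Comparing dimensions yields $n-1=\dim\rho^*(\mathcal{A})\alpha\le\dim W^{\perp}=n-\dim W$, whence $\dim W\le 1$. Since $\mathcal{A}^{p}\neq 0$ provides a nonzero matrix of the form $a_1\cdots a_p$, its image is nonzero and contained in $W$, so $W\neq 0$ and therefore $\dim\mathrm{Im}(\mathcal{A}^{p})=\dim W=1$.

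The special case then falls out instantly: taking $p=1$, i.e. $\mathcal{A}^2=0$, gives $\dim\mathrm{Im}(\mathcal{A})=1$, and since the first Kravchuk index is $\nu=n-\dim\mathcal{A}\mathbb{R}^n=n-\dim\mathrm{Im}(\mathcal{A})$, we obtain $\nu=n-1$. I expect the only genuinely delicate point to be isolating the correct annihilator, namely realizing that one should feed the \emph{whole} of $\mathcal{A}$ (not merely $\mathcal{A}^p$) into the orbital map: it is precisely the full $(n-1)$-dimensional image $\rho^*(\mathcal{A})\alpha$ that must be squeezed into the $(n-\dim W)$-dimensional space $W^{\perp}$, and it is the hypothesis $\mathcal{A}^{p+1}=0$ that forces this containment. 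Everything else is routine and uses only the stated data; in particular neither the maximality of $\mathcal{A}$ nor the surjectivity of $Q$ is needed beyond the injectivity already guaranteed by the Frobenius property.
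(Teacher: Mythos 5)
Your proof is correct, but it is packaged differently from the paper's. The paper argues by contraposition and stays entirely at the level of the Frobenius $2$-form: assuming $\dim\mathrm{Im}(\mathcal A^{p})\ge 2$, it picks, for an \emph{arbitrary} $\alpha\in\mathcal G^{*}$, a nonzero vector $x\in\mathrm{Im}(\mathcal A^{p})\cap\ker\alpha$ (possible precisely because the codimension of $\ker\alpha$ is $1$) and checks that $\partial\alpha(x,v)=\langle\alpha,kx+ax\rangle=0$ for every $v=k\mathbb I_{\mathbb R^n}+a+y$, since $ax\in\mathcal A^{p+1}\mathbb R^{n}=0$ and $\alpha(x)=0$; thus every $\partial\alpha$ is degenerate and $\mathcal G$ cannot be Frobenius. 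This uses nothing beyond the definition (\ref{eq:Frobenius}). You instead run the argument forward through the open-orbit criterion quoted before Theorem \ref{thm:structure-2-solvable-Frobenius} (Frobenius $\Leftrightarrow$ the orbital map $Q$ is an isomorphism), restrict $Q$ to $\mathcal A$, and squeeze the $(n-1)$-dimensional image into the annihilator $W^{\perp}$ of $W=\mathrm{Im}(\mathcal A^{p})$, which is the dual formulation of the same mechanism: in both proofs the engine is $\mathcal A\cdot\mathrm{Im}(\mathcal A^{p})\subseteq\mathcal A^{p+1}\mathbb R^{n}=0$, and your dimension count $n-1\le n-\dim W$ replaces the paper's choice of a radical vector in $W\cap\ker\alpha$ (that intersection trick is exactly what absorbs the extra $\mathbb R\mathbb I_{\mathbb R^n}$ direction that your count simply omits). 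What each approach buys: the paper's proof is self-contained and needs no external input; yours is slightly slicker and more structural, but it leans on the cited equivalence from the Gerstenhaber preprint — legitimate here, since the paper states it and reuses it in the proof of Theorem \ref{thm:structure-2-solvable-Frobenius}, but a genuine dependency. One point you pass over silently: the criterion requires $\dim\mathfrak B=n$, which holds because the nilpotent (hence traceless) algebra $\mathcal A$ cannot contain $\mathbb I_{\mathbb R^n}$, so the sum $\mathbb R\mathbb I_{\mathbb R^n}\oplus\mathcal A$ really is direct; this is immediate but worth a word.
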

\begin{proof}
  If $\dim$\;Im($\mathcal A^{p}$)$\ge 2$, then for any $\alpha\in \mathcal G^*$, we  would  have $Im(\mathcal A^{p})\cap \ker\alpha\neq 0.$ Choose $x\in Im(\mathcal A^{p})\cap \ker\alpha$ such that $x\neq 0.$  As any $v\in \mathcal G$   is  of the form $v=k\mathbb I_{\mathbb R^n} + a + y,$ where $k\in\mathbb R$, $a\in\mathcal A$, $y\in\mathbb R^n$, we would have $\partial \alpha(x,v)=\langle \alpha, kx+ax \rangle =0,$  $\forall v\in\mathcal G.$ This  would mean that $\partial \alpha$ is degenerate, for any $\alpha\in \mathcal G^*.$ 
  Thus $\mathcal G$ would not be Frobenius.
In particular if $p=1$ and $(\mathbb R\mathbb I_{\mathbb R^n}\oplus \mathcal A)\ltimes \mathbb R^n$ is Frobenius, then $\dim Im(\mathcal A)=1$ and hence $\nu=n-1.$
\end{proof}
Now we are ready to prove the following classification theorem. 
\begin{theorem}\label{thm:class2-kravchuk-(n-1)-0-1} Up to conjugation,  $\mathcal A:=  \mathcal A_{n,1}$  as in Example  \ref{example-general-nD}, is  the unique MANS of class $2$, of $\mathfrak{sl}(n,\mathbb R)$,  such that $(\mathbb R\mathbb I_{\mathbb R^n}\oplus \mathcal A)\ltimes \mathbb R^n$ is  Frobenius. Equivalently, it is the unique MANS  of $\mathfrak{sl}(n,\mathbb R)$ with  Kravchuk signature $(n-1,0,1)$  such that   $(\mathbb R\mathbb I_{\mathbb R^n}\oplus \mathcal A)\ltimes \mathbb R^n$ is  Frobenius. In other words, 
 $ \mathcal G_{n,1}$  as in Example  \ref{example-general-nD}, is the unique 2-solvable Frobenius Lie algebra  given by a MANS of class 2.

\end{theorem}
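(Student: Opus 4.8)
The plan is to combine the dimension constraints forced by the Frobenius hypothesis (via Lemma \ref{2-nilp-MASA}) with the rigidity of the class-$2$ condition $\mathcal A^2=0$, and then show that together they collapse all candidates into a single conjugacy class. First I would record the dimension bookkeeping. Since $\mathcal G=(\mathbb R\mathbb I_{\mathbb R^n}\oplus\mathcal A)\ltimes\mathbb R^n$ is $2$-solvable (its derived ideal is the Abelian $\mathbb R^n$) and Frobenius, the structure result recalled before Theorem \ref{thm:structure-2-solvable-Frobenius} forces $\dim(\mathbb R\mathbb I_{\mathbb R^n}\oplus\mathcal A)=\dim\mathbb R^n=n$; as $\mathbb I_{\mathbb R^n}\notin\mathfrak{sl}(n,\mathbb R)\supseteq\mathcal A$ this gives $\dim\mathcal A=n-1$. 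Moreover $\mathbb R\mathbb I_{\mathbb R^n}\oplus\mathcal A$ is then a MASA of $\mathfrak{gl}(\mathbb R^n)$, so $\mathcal A$ is an $(n-1)$-dimensional MASA of $\mathfrak{sl}(n,\mathbb R)$ and Lemma \ref{2-nilp-MASA} (case $p=1$) applies, yielding $\dim\operatorname{Im}(\mathcal A)=1$ and $\nu=n-1$.

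Next I would determine the Kravchuk signature and the key line. Writing $L:=\operatorname{Im}(\mathcal A)$, the hypothesis $\mathcal A^2=0$ gives $L=\mathcal A\mathbb R^n\subseteq\ker\mathcal A$, so $\mu=\dim\ker\mathcal A\geq 1$; since $m=n-\nu-\mu=1-\mu\geq 0$ forces $\mu=1$ and $m=0$, I obtain the signature $(n-1,0,1)$ together with the equality $L=\ker\mathcal A=\operatorname{Im}(\mathcal A)$, a single line. Fixing a vector $e_n$ spanning $L$ and completing it to a basis $(e_1,\dots,e_n)$, every $a\in\mathcal A$ has image in $L$, hence can be written $a(x)=f_a(x)\,e_n$ for a unique $f_a\in(\mathbb R^n)^*$, and $L\subseteq\ker\mathcal A$ forces $f_a(e_n)=0$, i.e. $f_a$ lies in the $(n-1)$-dimensional space $\{f\in(\mathbb R^n)^*: f(e_n)=0\}$. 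The assignment $a\mapsto f_a$ is linear and injective, hence an isomorphism from the $(n-1)$-dimensional $\mathcal A$ onto a subspace of this $(n-1)$-dimensional annihilator; by the dimension count it is \emph{onto}. Therefore $\mathcal A=\{x\mapsto f(x)e_n: f(e_n)=0\}$, which in the chosen basis is exactly $\operatorname{span}\{E_{n,1},\dots,E_{n,n-1}\}$, namely $\mathcal A_{n,1}$ of Example \ref{example-general-nD}.

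Finally I would deduce uniqueness. The construction depended only on selecting a basis adapted to the line $L=\operatorname{Im}(\mathcal A)=\ker\mathcal A$; any other class-$2$ MANS satisfying the Frobenius hypothesis produces, after the analogous adapted basis choice, the same standard algebra $\mathcal A_{n,1}$, and the corresponding change-of-basis matrix $P\in GL(n,\mathbb R)$ conjugates one to the other. Hence every such MANS is conjugate to $\mathcal A_{n,1}$, and conversely $\mathcal A_{n,1}$ has all the required properties (as checked in Example \ref{example-general-nD}), giving the unique conjugacy class and the associated unique $2$-solvable Frobenius Lie algebra $\mathcal G_{n,1}$; the equivalent signature reformulation is immediate since we proved the signature is $(n-1,0,1)$. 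The main obstacle I anticipate is not any single step but making the dimension bookkeeping airtight: in particular, justifying $\dim\mathcal A=n-1$ from the Frobenius hypothesis and confirming that $a\mapsto f_a$ surjects onto the \emph{full} hyperplane rather than a proper subspace, since it is precisely this surjectivity—forced by the dimension/maximality—that removes any residual parameters and collapses all candidates into one conjugacy class.
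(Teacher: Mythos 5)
Your proof is correct and follows essentially the same route as the paper's: invoke Lemma \ref{2-nilp-MASA} to get $\dim\operatorname{Im}(\mathcal A)=1$, use $\mathcal A^2=0$ to place that image line inside $\ker\mathcal A$, and then the dimension count $\dim\mathcal A=n-1$ in a basis adapted to the line forces $\mathcal A$ to be the full $(n-1)$-dimensional space of rank-one maps into the line that also kill it, hence conjugate to $\mathcal A_{n,1}$ of Example \ref{example-general-nD} (your row-$n$ model $\operatorname{span}\{E_{n,1},\dots,E_{n,n-1}\}$ is just the flip-conjugate of the paper's row-$1$ model $\operatorname{span}\{E_{1,j},\,j\ge 2\}$), with the bonus that you make explicit the bookkeeping $\dim\mathcal A=n-1$ that the paper leaves implicit. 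The only point you gloss over that the paper states is the converse half of the ``equivalently'' clause, namely that a MANS with Kravchuk signature $(n-1,0,1)$ is automatically of class $2$: if $\operatorname{Im}(\mathcal A)=\mathbb R v$, then $av\in\mathbb R v$ for each $a\in\mathcal A$, and nilpotency of $a$ forces $av=0$, so $\mathcal A^2=0$.
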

\begin{proof}  First, note that if $\mathcal A\subset\mathfrak{sl}(n,\mathbb R)$ is an $(n-1)$-dimensional MASA such that  $\mathcal A^2=0$ and  $(\mathbb R\mathbb I_{\mathbb R^n}\oplus\mathcal A)\ltimes \mathbb R^n$ is a Frobenius Lie algebra, then from Lemma \ref{2-nilp-MASA}, $\dim(Im(\mathcal A))=1$. Hence $\nu=n-1$ and $\mu=1.$ Conversely, if $\nu=n-1$, since $\mathcal A$ is nilpotent, we necessarily have $\mathcal A^2=0.$ 
Now, under the assumption that   $\mathcal A^2=0$ and  $(\mathbb R\mathbb I_{\mathbb R^n}\oplus\mathcal A)\ltimes \mathbb R^n$ is a Frobenius Lie algebra, let $\hat e_1\in \mathbb R^n$  such that $\mathcal A\mathbb R^n=\mathbb R\hat e_1.$ We have $\mathcal A\hat e_1=0$ and  
$\ker \mathcal A=\mathbb R\hat e_1.$
Thus, in any basis of $\mathbb R^n$ whose first vector is $\hat e_1,$  the elements of $\mathcal A$ are linear combinations of $E_{1,j},$ $j=2,\dots,n$ and so
 $\mathcal A$ assumes the same form as the MASA $\mathcal A_{n,1}$  of $\mathfrak{sl}(n,\mathbb R)$ of Example  \ref{example-general-nD}. 
Consequently,  $\mathcal A$ is conjugate to  $\mathcal A_{n,1}$ in $\mathfrak{gl}(n,\mathbb R)$ and   $(\mathbb R\mathbb I_{\mathbb R^n}\oplus \mathcal A)\ltimes\mathbb R^n$ is isomorphic to the Lie algebra $\mathcal G_{n,1}$, see  Example  \ref{example-general-nD}). 
 \end{proof}

\subsection{Some important examples}

We propose several families of pairwise non-isomorphic (resp. non-conjugate) $2$-solvable Frobenius Lie algebras (resp. MASAs). 
\begin{example}
\label{example-general-nD}
 For any integer $n\ge 2$, we construct below a family of ($n-1$) 2-solvable Frobenius Lie algebras $\mathcal G_{n,p}:=\mathfrak{B}_{n,p}\ltimes \mathbb R^n$,  $p=1,\dots,n-1$. The $\mathfrak{B}_{n,p}$'s are pairwise non-conjugate commutative algebras of polynomials in $n-1, n-2,\dots,2, 1$ $n\times n$ matrices, respectively.
 For a given $p$, with $1\leq p\leq n-1,$ set $M_{n,p}:=\sum\limits_{l=1}^{p}E_{l,l+1}$, so that $M_{n,p}^{j-1}=\sum\limits_{i=1}^{p-j+2} E_{i,i+j-1}$,  for  $2\leq j\leq p+1$, and  $\; M_{n,p}^{p+1}=0$. 
Let $\mathfrak{B}_{n,p}$  be the Abelian subalgebra of $\mathfrak{gl}(n,\mathbb R)$ spanned by  $e_j,$ $j=1,\dots,n$, where
$e_j:=M_{n,p}^{j-1}$, $j=1,\dots,p+1$,  and for $j=p+2,\dots,n$, $e_{j}:=E_{1,j}$. In 
 the canonical basis $(\tilde e_1, \dots,\tilde e_n)$ of $\mathbb R^n$, with dual basis $(\tilde e_1^*,\dots,\tilde e_n^*)$, we have
\begin{eqnarray}M_{n,p}\tilde e_1&=&M_{n,p}\tilde e_{q}=0,\;\; q\ge p+2, \nonumber\\
M_{n,p}^{k}\tilde e_q&=&M_{n,p}^{k-1}\tilde e_{q-1}=M_{n,p}^{k-j}\tilde e_{q-j}, \;\;  1\leq j\leq\text{min}(k,q-1), \forall q, \; 2\leq q\leq p+1.\nonumber
\end{eqnarray}
So if $k\ge q$, then $M_{n,p}^{k}\tilde e_q=0$,  and  if $k\leq q-1\leq p$, then $M_{n,p}^{k}\tilde e_q=\tilde e_{q-k}$.
As one can see, we have 
$\tilde e_1^*\circ e_j= \tilde e_1^*\circ M_p^{j-1} = \tilde e_j^*$, $j=1,\dots,p+1$ and $\tilde e_1^*\circ e_j=\tilde e_1^*\circ E_{1,j}=\tilde e_j^*$, for $j=p+2,\dots,n$.
 This shows that $\tilde e_1^*$ has an open orbit for the contragredient action of $\mathfrak{B}_{n,p}$ on $(\mathbb R^n)^*$.
For any $M:=x_1e_1+\dots+x_ne_n\in \mathfrak{B}_{n,p},$  we have $\chi_M(\lambda) =(x_1-\lambda)^n$. But  the equality $(x_1-M)^{p+1}=0$ implies that $\chi_{\min,M}(\lambda)$ divides $(x_1-\lambda)^{p+1}$. So  $\mathfrak{B}_{n,p}$ is not the space of polynomials of a nonderogatory matrix, unless  $p=n-1$. In the basis 
$(e_j, e_{n+j}=\tilde e_j,\; j=1,\dots,n)$ the Lie bracket of the  2-solvable Frobenius Lie algebra  
 $\mathcal G_{n,p}:=\mathfrak{B}_{n,p}\ltimes \mathbb R^n$,  is given by the following table, $j,k,q\in\{1,2,\dots,n\},$  
\begin{eqnarray}
\;\; \;\;\;\;\;\; 
[e_1,e_{n+j}] &=& e_{n+j} \;, \; j=1,\dots, n,\nonumber\\ 
\;
 [e_p,e_{n+q}] &=&0, \text{ if } p\ge q+1, \nonumber\\  
\; [e_k,e_{n+q}]&=&e_{n+q-k+1} \text{ if } 1\leq k \leq q\leq p+1,
\nonumber\\ 
\; [e_q,e_{n+q}] &=& e_{n+1},  \text{ for }  q=p+2,\dots,n.\label{BracketGnp} 
\end{eqnarray}
The form  $e_{n+1}^*$ is a Frobenius functional on $\mathcal G_{n,p}$ and $\partial e_{n+1}^*=-\sum\limits_{i=1}^{n}e_j^*\wedge e_{n+j}^*$.  Note that $\mathcal G_{n,p}$ and $\mathcal G_{n,q}$ are isomorphic if and only if $p=q$. Indeed, although each $\mathcal{G}_{n,p}$ has a codimension $1$ nilradical $\mathcal N_{n,p}=\sspan(e_2,\dots,e_{2n})$, the derived ideal $[\mathcal N_{n,p},\mathcal N_{n,p}]=\sspan(e_{n+1},\dots, e_{n+p})$ is $p$-dimensional
and $\mathcal{N}_{n,p}$ is $(p+1)$-nilpotent. So $\mathcal N_{n,p}$ and $\mathcal N_{n,q}$ are not isomorphic whenever $p\neq q$ and hence, neither are the Lie algebras $\mathcal G_{n,p}$ and $\mathcal G_{n,q}$. 
Thus   $\mathfrak{B}_{n,p}$ and  $\mathfrak{B}_{n,q}$ are not conjugate.
The family ($\mathcal G_{n,p}$)$_{1\leq p\leq n-1}$, has two special cases. (1) The case $\mathfrak{B}_{n,n-1}=\mathbb R[M_0],$ where $M_0=\sum\limits_{j=1}^{n-1} E_{j,j+1}$ is nonderogatory.   See 
 Section \ref{chap:classification-nonderogatory} for a 
 full classification of  2-solvable Frobenius Lie algebras $\mathbb R[M]\ltimes V$,  for nonderogatory $M\in\mathfrak{gl}(V)$. To keep the same notations as in Section \ref{chap:classification-nonderogatory}, set $\mathfrak{D}_0^n=\mathcal G_{n,n-1}.$ 
(2) As regards the case $p=1$, the  
nilradical $\mathcal N_{n,1}$ of $\mathcal G_{n,1}$ is the $(2n-1)$-dimensional Heisenberg Lie algebra $\mathcal H_{2n-1}:=$span($e_{j}:=E_{1,j}, j=2,\dots,n, e_{n+k}, \; k=1,\dots,n$),  with Lie brackets $[e_j,e_{n+j}] =e_{n+1}$, $j=2,\dots,n.$ Note also that the $(n-1)$ spaces $\mathcal A_{n,p}=$span($e_2,e_3,\dots,e_n$)  are all $(n-1)$-dimensional Abelian subalgebras of $\mathfrak{sl}(n,\mathbb R)$, which according to Theorem \ref{thm:structure-2-solvable-Frobenius} and Proposition \ref{lemma:conjugaison-MASAs}, are pairwise non-conjugate MASAs of $\mathfrak{sl}(n,\mathbb R).$ 
In particular,  $\mathcal A_{n,1}$ is spanned by $E_{1,j},$ $j=2,\dots,n$ and so $\mathcal A_{n,1}^2=0.$
\end{example}

\begin{example}
\label{example2a} For $n\ge 3,$ let  $\mathcal P_{n,p}$ be the Abelian subalgebra of $\mathfrak{sl}(n,\mathbb R)$ defined as  $\mathcal P_{n,p}=\{\sum\limits_{j=2}^{p}m_j(E_{1,j}+E_{j,n}) + \sum\limits_{j=p+1}^{n}m_jE_{1,j}
 ,\; m_2,\dots,m_n\in\mathbb R\}$,  $p=2,3,\dots,n-1$. 
On  $\mathcal C_{n,p}:=\mathcal{P}_{n,p}\oplus \mathbb R \mathbb I_{\mathbb R^n}$,  
 set $e_1:=\mathbb I_{\mathbb R^n}$ and $e_j:=E_{1,j}+E_{j,n}$, $j=2,\dots,p$, 
$e_{p+i}=E_{1,p+i}$, $i=1,\dots,n-p$. Let $(\tilde e_j)$ be the canonical basis of $\mathbb R^n$. 
For any  $M=m_1e_1+\dots+m_ne_n$, we have $\chi_M(X)=(m_1-X)^n$, 
 $ (m_1-M)^{2}=(m_2^2+\dots+m_{p}^2)E_{1,n}$ and $ (m_1-M)^{3}=0$. So, up to a scaling,  $\chi_{\min,M}=(m_1-X)^3$.  Thus for any $n\ge 4,$ the algebra $\mathcal C_{n,p}$ contains no nonderogatory matrix. 
We have $\tilde e_1^*\circ e_1= \tilde e_1^*$, $\tilde e_1^*\circ e_j= \tilde e_1^*\circ (E_{1,j}+E_{j,n})= \tilde e_1^*\circ E_{1,j}=\tilde e_j^*$, 
$\tilde e_1^*\circ e_{p+i}= \tilde e_1^*\circ E_{1,p+i}= \tilde e_{p+i}^*$, $i=1,\dots,n-p,$ $j=2,\dots,p.$ So $\tilde e_1^*$ has an open orbit. 
Hence $\mathcal C_{n,p}$ and $\mathcal P_{n,p}$ are MASAs of $\mathfrak{gl}(n,\mathbb R)$ and $\mathfrak{sl}(n,\mathbb R)$, respectively (Theorem \ref{thm:structure-2-solvable-Frobenius}).
The Lie bracket of $\mathfrak h_{n,p}:=\mathcal{C}_{n,p}\ltimes \mathbb R^n$,  in  the basis $(e_j, e_{n+j}:=\tilde e_j, j=1,\dots,n),$  is 
\begin{eqnarray}
\; [e_1,e_{n+j}] &=&e_{n+j}\;, \; j=1,\dots,n,\nonumber\\
 \; [e_j,e_{n+j}] &=&e_{n+1}, \;j=2,\dots,n, \; \nonumber \\
\; [e_k,e_{2n}] &=&e_{n+k}\;, \;\; k=2,\dots,p,\nonumber
\end{eqnarray}
so  $\partial e_{n+1}^*=-\sum\limits_{j=1}^ne_j^*\wedge e_{n+j}^*$ is nondegenerate, $\mathfrak h_{n,p}$ is a Frobenius Lie algebra. The  nilradical $\mathfrak{n}_{n,p}:=$span$(e_2,\dots,e_{2n})$ of $ \mathfrak h_{n,p}$ is of
codimension $1.$ It is the semidirect sum $\mathfrak{n}_{n,p}=\mathbb R e_{2n}\ltimes\Big( \mathcal H_{2n-3}\oplus\mathbb R e_{n}\Big)$ of the line $\mathbb R e_{2n}$ and the so-called Abbena Lie algebra $ \mathcal H_{2n-3}\oplus\mathbb R e_{n}$ where the former acts on the latter by nilpotent derivations.
Here, $ \mathcal H_{2n-3}$ is the  ($2n-3$)-dimensional Heisenberg Lie algebra  $ \mathcal H_{2n-3}=\text{span}(e_{2},\dots,e_{n-1},e_{n+1},\dots,e_{2n-1})$. The derived ideal $[\mathfrak{n}_{n,p},\mathfrak{n}_{n,p}]$ is $p$-dimensional and spanned by $(e_{n+1},\dots,e_{n+p}).$
 So if $p\neq q$, then $\mathfrak{n}_{n,p}$ and $\mathfrak{n}_{n,q}$ are not isomorphic, hence $\mathcal{P}_{n,p}$ and $\mathcal{P}_{n,q}$ are not conjugate, $\mathfrak h_{n,p}$ and  $\mathfrak h_{n,q}$ are not isomorphic.
Note that $\mathfrak{n}_{n,p}$ is $3$-step nilpotent. 
So, for any $(n,p)$ with $n\ge 4$ and $3\le p\le n-1$,  $\mathfrak{n}_{n,p}$  is isomorphic to none of the nilradicals $\mathcal N_{n,q}$, $3\le q\le n-1$, of Example  \ref{example-general-nD}, 
as they are all $(q+1)$-step nilpotent. When $p=2,$ we note that the linear map $\psi:\mathcal G_{n,2}\to \mathfrak h_{n,2}$,
\beqn \label{isom:Gn2-hn2} \psi(e_m)&=&e_m'\; , \;\; 1\le m\le 2n\;,\;\; m\notin \{3,n, n+3,2n\}\;,\nonumber \\
\psi(e_3)&=&e_{n}'\; , \; \psi(e_n)=e_{3}'\; , \; \psi(e_{n+3})=e_{2n}'\; , \; \psi(e_{2n})=e_{n+3}'\;, 
\eeqn
is an isomorphism of Lie algebras, for any $n\ge 3,$ where $\mathcal G_{n,2}$ is as in Example \ref{example-general-nD} and the above basis of 
$ \mathfrak h_{n,2}$ has been renamed $(e_1',\dots,e_{2n}').$
 Hence, altogether, the Lie algebras  $\mathfrak{h}_{n,p}$ are not isomorphic to any of the Lie algebras $\mathcal G_{n,q}$ of Example  \ref{example-general-nD}, unless $p= q=2.$  
Thus none of the MASAs $\mathfrak{B}_{n,q}$  of Example  \ref{example-general-nD} is conjugate to $\mathcal{C}_{n,p},$ unless $p= q=2.$   
\end{example}

 \begin{example}\label{example23} Let $L_{n,n}'$ be the following  Abelian subspace of $\mathfrak{sl}(n,\mathbb R)$,  
\begin{eqnarray}L_{n,n}'=\{M:=\sum\limits_{j=2}^{n}m_j(E_{1,j}+E_{n-j+1,n}) \;,\; m_j\in\mathbb R, j=2,\dots,n\}.\end{eqnarray}
On $\mathfrak B_{n,n}':=\mathbb R \mathbb I_{\mathbb R^n}\oplus  L_{n,n}'$, 
 we set $e_1:=\mathbb I_{\mathbb R^n},$ $e_j:=E_{1,j}+E_{n-j+1,n},$ $j=2,\dots,n$. 
For any $M=m_1e_1+\dots+m_ne_n\in \mathfrak{B},$ we have $\chi(X)=(m_1-X)^n$. We also have $\chi_{\min,M}=(m_1-X)^3$, since $ (m_1-M)^{3}=0$ and $ (m_1-M)^{2}\neq 0$. Thus, for any $n\ge 4,$ 
 $\mathfrak B_{n,n}'$ contains 
no nonderogatory matrix.
We have $\tilde e_1^*\circ e_j= \tilde e_j^*$, $j=1,\dots,n-1,$ $\tilde e_1^*\circ e_n= 2\tilde e_n^*.$ So $\tilde e_1$ has an open orbit. According to Theorem \ref{thm:structure-2-solvable-Frobenius}, $\mathfrak B_{n,n}'$ and $L_{n,n}'$ are MASAs of $\mathfrak{gl}(n,\mathbb R)$ and $\mathfrak{sl}(n,\mathbb R)$, respectively.
 In the basis $(e_j, e_{n+j}:=\tilde e_j, j=1,\dots,n),$  the Lie bracket of $\mathcal G_{n,n}':=\mathfrak{B}_{n,n}'\ltimes \mathbb R^n$, is 
\begin{eqnarray}
\; [e_1,e_{n+j}] &=&e_{n+j}\;, \; j=1,\dots,n,\nonumber\\
\; [e_j,e_{n+j}] &=&e_{n+1}, \; [e_j,e_{2n}] =e_{2n-j+1}, \; j=2,\dots,n-1, \;\; [e_n,e_{2n}]=2e_{n+1}\;.\nonumber
\end{eqnarray}
As one  sees, $\partial e_{n+1}^*=-\sum\limits_{j=1}^{n-1}e_j^*\wedge e_{n+j}^*-2e_n^*\wedge e_{2n}^*$ is nondegenerate, so $\mathcal G_{n,n}'$ is a Frobenius Lie algebra. It also has a codimension $1$ nilradical $\mathcal N_{n,n}'=span(e_2,\dots,e_{2n})$, which is the semidirect sum  
of $\mathbb R e_{2n}$ and $ \mathcal H_{2n-3}\oplus\mathbb R e_{n},$ where the former acts on the latter by nilpotent derivations. However,
$[\mathcal N_{n,n}',\mathcal N_{n,n}']=\sspan(e_{n+1},\dots,e_{2n-1})$ is $(n-1)$-dimensional.
 So $\mathcal G_{n,n}'$ is not isomorphic to any of the Lie algebras $\mathcal G_{n,p}$ of Example  \ref{example-general-nD} and
 $\mathfrak h_{n,p}$ of Example \ref{example2a}. From Theorem \ref{thm:structure-2-solvable-Frobenius}, none of the MASAs $\mathfrak{B}_{n,p}$  in Example  \ref{example-general-nD} or $\mathcal C_{n,p}$ in Example \ref{example2a}, is conjugate to  $\mathfrak B_{n,n}'.$
\end{example}
\begin{remark}\label{rem:masa-no-frobenius}From  Theorem \ref{thm:structure-2-solvable-Frobenius}, the classification of $2$-solvable Frobenius Lie algebras is equivalent to that of $n$-dimensional MASAs of $\mathfrak{gl}(n,\mathbb K)$ acting on $(\mathbb K^n)^*$ with an open an orbit. However, not all $n$-dimensional MASAs have open orbit in $(\mathbb K^n)^*$. 
Indeed, for $n\ge 3$, the algebra   $\mathfrak{B}_{n}:= \mathbb R \mathbb I_{\mathbb R^n} \oplus L_{n}$, is a MASA of $\mathfrak{gl}(n,\mathbb R)$, where
 $L_{n}:=\Big\{\sum\limits_{i=1}^{n-1}k_{i,n}E_{i,n},\; k _{i,n}\in\mathbb R, i=1,\dots,n-1\Big\}$ is a MASA of $\mathfrak{sl}(n,\mathbb R).$  To see that,
consider $b=\sum\limits_{p,q=1}^nt_{p,q}E_{p,q} \in\mathfrak{gl}(n,\mathbb R)$, with $t_{p,q}\in\mathbb R$, $p,q=1\dots,n.$  We have $[E_{i,n},b]=\sum\limits_{l=1}^{n-1}t_{n,l}E_{i,l} +(t_{n,n}-t_{i,i})E_{i,n} 
-\sum\limits_{1\leq k\leq n, \; k\neq i} t_{k,i}E_{k,n} $, for any $i=1,\dots,n-1.$ 
So the relation $[b,a] =0$, for any $ a\in\mathfrak B_n$, is equivalent to the following, valid for any $i$ with $1\leq i\leq n-1$: $t_{i,i}=t_{n,n}$ and for any $k$ with $1\leq k\leq n$ and $k\neq i$, one has 
  $t_{k,i}=0$. 
In other words $b=t_{n,n}\mathbb I_{\mathbb R^n} + t_{1,n}E_{1,n}+\dots + t_{n-1,n}E_{n-1,n}$, or equivalently, $b$ is an element of $\mathfrak{B}_n$. This simply means that $\mathfrak{B}_n$ is a MASA of $\mathfrak{gl}(n,\mathbb R)$.
The orbit  $\{\alpha\circ a, a\in \mathfrak{B}_{n}\}$  of any $\alpha\in (\mathbb R^n)^*$ is  at most 2-dimensional and spanned by $\alpha$ and $\tilde e_n^*$. More precisely, let $(\tilde e_1,\dots,\tilde e_n)$ be the canonical basis of $\mathbb R^n$ and let $\alpha = s_1\tilde e_1^*+\dots+s_n\tilde e_n^*\in (\mathbb R^n)^*$ , 
where $s_1,\dots,s_n\in\mathbb R,$ then for any $k_1,k_{1,n},\dots,k_{n-1,n}\in \mathbb R$ and $a=k_1\mathbb I_{\mathbb R^3}+k_{1,n}E_{1,n}+\dots+k_{n-1,n}E_{n-1,n}\in \mathfrak{B}_{n}$, one has $\tilde e_i^*\circ a =k_1\tilde e_i^*+k_{i,n}\tilde e_n^*$ ,\;  $i=1,\dots,n-1$ and  $\tilde e_n^*\circ a =k_1\tilde e_n^*$, 
so that
 \begin{eqnarray}\alpha\circ a
&=&
k_1\alpha+(k_{1,n}s_1
+k_{2,n}s_2
+\dots+
 k_{n-1,n} s_{n-1})e_n^*.\nonumber
\end{eqnarray}
For $n=3$, $L_3$ coincides with $L_{2,4}$ in the list of MASA of $\mathfrak{sl}(n,\mathbb R)$ supplied in \cite{winternitz}. 
\end{remark}

\subsection{Cartan subalgebras of $\mathfrak{sl}(n,\mathbb R)$}\label{sect:Cartan-subalgebras-sl(n,R)}

Recall that a Cartan subalgebra $\mathfrak{h}$ of a Lie algebra $\mathcal G$ is a nilpotent subalgebra which is equal to its own normalizer $\mathcal N_{\mathcal G} (\mathfrak{h})$ 
 in $\mathcal G.$
Cartan subalgebras of semisimple Lie algebras must necessarily be Abelian, more precisely they are MASAs  which contain only semisimple elements.
They have been extensively studied by several authors amongst which E. Cartan, Harish-Chandra (\cite{harish-chandra}), B. Kostant (\cite{kostant}), M. Sugiura (\cite{sugiura}), etc. 
 Further recall that a Cartan subalgebra $\mathfrak{h} $ of a semisimple Lie algebra $\mathcal G$, splits into a direct sum $\mathfrak{h} =\mathfrak{h^+} \oplus \mathfrak{h} ^-$ of two subalgebras $\mathfrak{h}^+ $ and $\mathfrak{h}^- $, respectively 
called its toroidal and its vector parts, such that $\mathfrak{h}^+ $ is only made of elements of $\mathfrak{h} $ whose adjoint operator (as a linear operator of $\mathcal G$) only has purely imaginary eigenvalues and adjoint operators of elements of $\mathfrak{h}^- $ have only real eigenvalues. See e.g. \cite{sugiura}. We propose the following characterization of Cartan subalgebras of $\mathfrak{sl}(n,\mathbb R).$
\begin{theorem}\label{thm:cartansubalgebras} Let  $\mathfrak{h}\subset \mathfrak{sl}(n,\mathbb R)$ be a subalgebra. Set $\mathfrak{B}_{\mathfrak{h}}:=\mathbb R\mathbb I_{\mathbb R^n}\oplus \mathfrak{h}$. The following are equivalent.
(a) $\mathfrak{h}$ is a Cartan subalgebra   of $\mathfrak{sl}(n,\mathbb R)$, with toroidal and vector parts of  respective  dimensions $p$ and $q.$
(b) $\mathfrak{B}_\mathfrak{h}=\mathbb R[M]$, for   
some nonderogatory $M\in\mathfrak{gl}(n,\mathbb R)$ with distinct $2p$ complex and q real eigenvalues, where $2p+q=n.$
(c) $\mathfrak{B}_\mathfrak{h}\ltimes  \mathbb R^n$ is the direct sum 
 of $p$ copies of
$\mathfrak{aff}(\mathbb C)$  and $q$ copies of $\mathfrak{aff}(\mathbb R)$, where $\mathfrak{aff}(\mathbb R)$ and $\mathfrak{aff}(\mathbb C)$  are as in Examples  \ref{ex:D0n}, \ref{ex:D01n}.  
\end{theorem}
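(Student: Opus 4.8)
The plan is to prove the two equivalences (a) $\Leftrightarrow$ (b) and (b) $\Leftrightarrow$ (c), the pivot throughout being the $n$-dimensional abelian subalgebra $\mathfrak{B}_{\mathfrak{h}}=\mathbb{R}\mathbb{I}_{\mathbb{R}^n}\oplus\mathfrak{h}$ of $\mathfrak{gl}(n,\mathbb{R})$. The first fact I would record is that a Cartan subalgebra of the semisimple algebra $\mathfrak{sl}(n,\mathbb{R})$ is exactly a MASA all of whose elements are semisimple; adjoining the central, semisimple operator $\mathbb{I}_{\mathbb{R}^n}$ produces an $n$-dimensional abelian subalgebra of $\mathfrak{gl}(n,\mathbb{R})$ again consisting of semisimple elements, and conversely $\mathfrak{h}=\mathfrak{B}_{\mathfrak{h}}\cap\mathfrak{sl}(n,\mathbb{R})$. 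This turns the problem into one about $n$-dimensional MASAs of $\mathfrak{gl}(n,\mathbb{R})$ made of semisimple endomorphisms, which is where Lemma \ref{nonderogatorymatrices} enters.

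For (a) $\Rightarrow$ (b), since the elements of $\mathfrak{B}_{\mathfrak{h}}$ are commuting and semisimple, they are simultaneously diagonalizable over $\mathbb{C}$, so $\mathfrak{B}_{\mathfrak{h}}\otimes_{\mathbb{R}}\mathbb{C}$ lies inside the diagonal subalgebra of $\mathfrak{gl}(n,\mathbb{C})$ in a suitable basis; as $\dim_{\mathbb{C}}(\mathfrak{B}_{\mathfrak{h}}\otimes\mathbb{C})=n$ equals the dimension of that diagonal subalgebra, the two coincide. Hence the $n$ eigenvalue functionals on $\mathfrak{B}_{\mathfrak{h}}$ are pairwise distinct, and a generic real element $M\in\mathfrak{B}_{\mathfrak{h}}$ avoids the proper subspaces where two of these functionals agree, so $M$ has $n$ distinct eigenvalues and is therefore nonderogatory. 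By Lemma \ref{nonderogatorymatrices}, $\text{cent}(M,\mathfrak{gl}(n,\mathbb{R}))=\mathbb{R}[M]$, and since $\mathbb{R}[M]\subseteq\mathfrak{B}_{\mathfrak{h}}$ with both of dimension $n$ we obtain $\mathfrak{B}_{\mathfrak{h}}=\mathbb{R}[M]$. As $M$ is real its non-real eigenvalues occur in conjugate pairs, say $p$ pairs together with $q$ real eigenvalues, all distinct and with $2p+q=n$. I would then identify the Cartan decomposition: the infinitesimal rotations in the $p$ complex planes span the toroidal part of dimension $p$, while the real eigendirections furnish the vector part, a short computation with $\text{ad}_X$ on $\mathfrak{sl}(n,\mathbb{R})$ confirming which elements have purely imaginary, respectively real, spectrum.

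For (b) $\Leftrightarrow$ (c), with $M$ nonderogatory and regular semisimple one has $\mathbb{R}[M]\cong\mathbb{R}[X]/(\chi_M)$, where $\chi_M$ is a product of $p$ distinct real irreducible quadratics and $q$ distinct linear factors; the Chinese Remainder Theorem then gives an algebra isomorphism of $\mathbb{R}[M]$ onto the product of $p$ copies of $\mathbb{C}$ and $q$ copies of $\mathbb{R}$, whose idempotents project $\mathbb{R}^n$ onto the $M$-invariant blocks $W_1,\dots,W_p$ (each $\cong\mathbb{R}^2$) and $L_1,\dots,L_q$ (each $\cong\mathbb{R}$). Each idempotent annihilates the remaining blocks, so the pieces $A_j\ltimes W_j\cong\mathfrak{aff}(\mathbb{C})$ and $B_k\ltimes L_k\cong\mathfrak{aff}(\mathbb{R})$ (Examples \ref{ex:D0n}, \ref{ex:D01n}) are mutually commuting ideals of $\mathfrak{B}_{\mathfrak{h}}\ltimes\mathbb{R}^n$ whose sum is the whole, yielding the asserted direct-sum decomposition. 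For the converse I would read off from such a decomposition an abelian algebra isomorphic to the same product and choose a single generator $M$ with pairwise distinct eigenvalues, which is nonderogatory and recovers $\mathfrak{B}_{\mathfrak{h}}=\mathbb{R}[M]$. Finally (b) $\Rightarrow$ (a) is immediate: polynomials in the semisimple $M$ are semisimple, so $\mathfrak{h}=\mathbb{R}[M]\cap\mathfrak{sl}(n,\mathbb{R})$ is a MASA of $\mathfrak{sl}(n,\mathbb{R})$ consisting of semisimple elements, hence a Cartan subalgebra, and its $p$ invariant planes and $q$ invariant lines recover the claimed structure.

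The hard part will be the bookkeeping in (a) $\Rightarrow$ (b): establishing that the semisimple MASA $\mathfrak{B}_{\mathfrak{h}}$ fills the entire diagonal over $\mathbb{C}$, so that a regular element is actually available, and then matching the abstract toroidal/vector splitting of the Cartan subalgebra with the complex-pair and real-eigenvalue data while keeping track of the trace-zero constraint that distinguishes $\mathfrak{h}$ from $\mathfrak{B}_{\mathfrak{h}}$. The remaining verifications, namely that the Chinese Remainder idempotents genuinely lie in $\mathbb{R}[M]$ and that the resulting blocks are bona fide Lie-algebra ideals, are routine but must be carried out to upgrade the algebra isomorphism to a direct sum of Lie algebras.
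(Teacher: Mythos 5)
Your proposal is correct in substance, but it takes a genuinely different route from the paper's. For (a)$\Rightarrow$(b) the paper does not argue intrinsically: it invokes Sugiura's conjugacy classification \cite{sugiura} to put $\mathfrak{h}$ into an explicit normal form $\mathfrak{h}'$ (rotation blocks plus a real diagonal), and then notes that a regular element of $\mathfrak{h}'$ is nonderogatory with $2p$ complex and $n-2p$ real eigenvalues, whence $\mathbb R\mathbb I_{\mathbb R^n}\oplus\mathfrak{h}=\mathbb R[M]$. You instead use the characterization of Cartan subalgebras as MASAs consisting of semisimple elements, simultaneous diagonalization over $\mathbb C$, and a genericity argument to produce the regular element, concluding with Lemma \ref{nonderogatorymatrices} and a dimension count. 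Your route is more self-contained, at the price of invoking the converse of the fact the paper records (semisimple MASA $\Rightarrow$ Cartan), which is standard but should be cited; the paper's route buys the explicit normal form, which also makes the identification of $p$ with the toroidal dimension immediate (the one point where your argument is sketchiest) and is reused elsewhere, e.g.\ in counting the $[\frac{n}{2}]+1$ conjugacy classes in Section \ref{proofofCartan-subalgebras-sl(n,R)}. For (b)$\Leftrightarrow$(c) the paper simply cites Theorem \ref{thm:classification}; your CRT-idempotent argument re-derives the relevant special case by the same mechanism (primary decomposition, Lemma \ref{lem:splitting}, Lemma \ref{lem:isomorphism4D}), so there is no essential difference there. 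For (b)$\Rightarrow$(a) your argument (polynomials in a regular semisimple matrix are semisimple, so $\mathbb R[M]\cap\mathfrak{sl}(n,\mathbb R)$ is a semisimple MASA, hence Cartan) is actually an improvement: the paper appeals to Theorem \ref{prop:abeliannilrad}, whose own proof cites Theorem \ref{thm:cartansubalgebras} back for precisely this equivalence, and your direct argument removes that circular cross-reference.

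Three fixable caveats. First, the inclusion you assert, $\mathbb R[M]\subseteq\mathfrak{B}_{\mathfrak{h}}$, is not automatic for a Lie subalgebra (it need not be closed under matrix products a priori); justify it by noting that $\mathfrak{B}_{\mathfrak{h}}$, being a MASA, contains its own centralizer and hence all powers of $M$, or use the cleaner inclusion $\mathfrak{B}_{\mathfrak{h}}\subseteq\mathrm{cent}(M,\mathfrak{gl}(n,\mathbb R))=\mathbb R[M]$ and compare dimensions. Second, your converse (c)$\Rightarrow$(b) needs Proposition \ref{lemma:conjugaison-MASAs} to pass from an abstract isomorphism of semidirect sums to conjugacy of the MASAs inside $\mathfrak{gl}(n,\mathbb R)$; choosing a generator in the abstract product does not by itself locate $\mathfrak{B}_{\mathfrak{h}}$ (the paper is equally terse on this point). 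Third, like the paper, you never verify the claimed dimension $q$ of the vector part: with the definition recalled in Section \ref{sect:Cartan-subalgebras-sl(n,R)}, the vector part of $\mathfrak{h}$ has dimension $n-1-p$, so in both proofs ``$q$'' is really the number of real eigenvalues; this defect lies in the statement and is shared by the paper's own proof, not special to yours.
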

Theorem \ref{thm:cartansubalgebras} is proved in Section \ref{proofofCartan-subalgebras-sl(n,R)}. Theorem \ref{prop:abeliannilrad} gives a complementary characterization.

\section{Classification of 2-solvable Frobenius Lie algebras given by nonderogatory linear maps}\label{chap:classification-nonderogatory}

\subsection{Some key examples}\label{sect:examples-nonderogatory}

\subsubsection{The Lie algebras $\mathfrak D_0^n$}\label{ex:D0n}
(1) Consider the simplest nonderogatory linear map $\psi := \mathbb I_{\mathbb R}.$ One gets the Lie algebra  $\mathcal G_\psi=\mathfrak{aff}(\mathbb R)$  of the group of affine motions of $\mathbb R.$
In the basis  $(e_1,e_2)$, with  $e_1=\psi^0$ and $e_2\in\mathbb R $,  the Lie bracket is $[e_1,e_2]=e_2.$ The 2-form $\partial e_2^*=-e_1^*\wedge e_2^*$ is nondegenerate.
(2) In the canonical basis $(\tilde e_{j})$   of $\mathbb R^n$, let $M_0\in\mathfrak{gl}(n,\mathbb R)$ be the  principal nilpotent matrix
\begin{eqnarray}\label{principalnilpotent} 
M_0= \sum\limits_{i=1}^{n-1}E_{i,i+1}.\end{eqnarray}
 It is nonderogatory, as $\chi_{\min,M_0}(X)=\chi_{M_0}(X)=X^n.$ 
We use the notation  $ e_{j}:=M_0^{j-1},$ $j=1,\dots,n.$
Set $e_{n+j}:=M_0^{j-1}\bar x=\tilde e_{n-j+1}$, where 
 $\bar x:=\tilde e_n$. Then ($e_{n+j}$) 
is a basis of $\mathbb R^n$. In the basis   $(e_{1}, e_{2}, \dots, e_{2n})$,  the Lie bracket of $\mathcal G_{M_{0}}=\mathbb R[M_0]\ltimes \mathbb R^n,$ is
\begin{eqnarray}\label{LieBracketsD0n}
 [e_{i},e_{n+j}]=e_{n+j+i-1}, ~i,j=1,\dots,n, \end{eqnarray}
where we use the convention $e_{2n+k}=0,$ whenever $k\ge 1.$
 In particular, 
$[e_{1},e_{n+j}]=e_{n+j},$ $j=1,\dots,n,$  and $[e_{j},e_{2n}]=0,$ $j=2,\dots,n.$
We write  ${\mathfrak D}_{0}^{n}$ instead of   $\mathcal G_{M_{0}}.$ The following  2-form  on  ${\mathfrak D}_{0}^{n}$, is non-degenerate,  
\begin{eqnarray}\partial e_{2n}^*=-\sum\limits_{j=1}^ne_j^*\wedge e_{2n-j+1}^*=- \sum\limits_{j=1}^n e_{n-j+1}^*\wedge e_{n+j}^*\;.\end{eqnarray} 
 Note that ${\mathfrak D}_{0}^{n}$ has an $n$-nilpotent codimension $1$ nilradical  $\mathcal N=$span$(e_2, \dots,  e_{2n}).$

\subsubsection{The Lie algebra $\mathfrak{D}_{0,1}^n$} \label{ex:D01n}
(A)  Let  $\psi\in\mathfrak{gl}(2,\mathbb R)$ with $\psi(\tilde e_1)=\tilde e_2$, $\psi(\tilde e_2)=-\tilde e_1$. One has  $\chi_{\min,\psi}(X)=\chi_\psi(X)
=1+X^2$. So $\psi$ has the  $2$ complex eigenvalues $ i$, $-i$.  Of course, $\mathbb R[\psi] =\mathbb R\mathbb I_{\mathbb R^2}\oplus \mathbb R \psi$ is a MASA of $\mathfrak{gl}(2,\mathbb R)$ and  $\tilde e_1^*\circ \psi^0 =\tilde e_1^*$,   $\tilde e_1^*\circ  \psi = -\tilde e_2^*$. 
 In the basis $(e_1,\dots,e_4)$ of $\mathcal G_{\psi}:=\mathbb R[\psi]\ltimes \mathbb R^2$, with $ e_1 =  \psi^0$, $e_2=  \psi,$ $e_3:= \tilde e_1$, $e_4:=\tilde e_2$, the Lie bracket  reads
 $[e_1, e_3] = e_3, [e_1, e_4] = e_4, [e_2, e_3] =e_4, [e_2, e_4] = - e_3.$
The 2-form  $\partial e_3^*= - e_1^* \wedge e_3^*+ e_2^* \wedge e_4^*$ 
 is  nondegenerate on $\mathcal G_{\psi}.$ 
 Note that $\mathcal G_{\psi}$ is the Lie algebra 
$\mathfrak{aff}(\mathbb C)$=$\Big\{
N(z_1,z_2):=\begin{pmatrix} z_1& z_2\\ 0 & 0\end{pmatrix}, z_j\in\mathbb C, \;j=1,2.\Big\}$ of the group of affine motions of the complex line $\mathbb C,$ looked at as a real Lie algebra, e.g. with the identifications 
 $e_1=N(1,0)$,  $e_2=N(i,0),$ 
$e_3=N(0,1)$, $e_4=N(0,i).$ We set $\mathfrak{D}_{0,1}^2:=\mathfrak{aff}(\mathbb C).$
(B) Let  $M_{0,1}=E_{2,1}-E_{1,2}+E_{4,3}-E_{3,4}+E_{1,3}+E_{2,4}\in\mathfrak{gl}(4,\mathbb R)$. 
One has  $\chi_{\min,M_{0,1}}(X)=\chi_{M_{0,1}}(X)
=(X^2+1)^2$. So $i$ and  $-i$  are the only eigenvalues of $M_{0,1}$ and $\mathbb R[M_{0,1}]$
 is a MASA of $\mathfrak{gl}(4,\mathbb R)$.
In the basis  $(e_1,e_2,\dots,e_8)$ of $\mathcal{G}_{M_{0,1}}$, with $e_1:=\mathbb I_{\mathbb R^4}$,  $e_2:=M_{0,1}$, 
$e_3:=M_{0,1}^2$, $e_4:=M_{0,1}^3$, 
 $e_{4+j}=\tilde e_j$ , $j=1,\dots,4$, 
the Lie bracket reads
\begin{eqnarray}\label{eq:forgottenLiealgebra} && [e_1,e_l]=e_l\;, \;\; l=5,6,7,8, \nonumber\\
&&[e_2,e_5]=e_6\;,\;\;\;\;[e_2,e_6]=-e_5\;,\; [e_2,e_7]=e_5+e_8\; ,\;\;\;\;\; [e_2,e_8]=e_6-e_7\;,
\nonumber\\
&&[e_3,e_5]=-e_5\;, \; [e_3,e_6]=-e_6\;,\; [e_3,e_7]=2e_6-e_7\;,\;\; \;\; [e_3,e_8]=-2e_5 -e_8\;,
\nonumber\\
&&[e_4,e_5]=-e_6\;,\; [e_4,e_6]=e_5\;,\;\; \;\;[e_4,e_7]=-3e_5-e_8\;,\; [e_4,e_8]=-3e_6+e_7\;.
\nonumber
\end{eqnarray}
Note that both $(e_7,M_{0,1}e_7 = e_5+e_8,  
M_{0,1}^2e_7 = 2e_6-e_7, M_{0,1}^3e_7 = -3e_5-e_8)$ and 

\noindent
$(e_8,M_{0,1}e_8 = e_6-e_7, \; 
 M_{0,1}^2e_8 =- 2e_5-e_8, M_{0,1}^3e_8 = -3e_5+e_7)$ are bases of $\mathbb R^4.$ 
The 2-form $\partial e_5^* =- e_1^*\wedge e_5^* + e_2^*\wedge( e_6^* - e_7^*) +  e_3^*\wedge (e_5^* + 2  e_8^*)   -  e_4^*\wedge (e_6^* -3 e_7^*) $ is nondegenerate. 
 (C) This generalizes (see Section \ref{sect:nondiagonalizableinC}) to a $2n$-dimensional 2-solvable Frobenius Lie algebra $\mathfrak{D}_{0,1}^n:=\mathcal G_{M_{0,1}}=\mathbb R[M_{0,1}]\ltimes \mathbb R^n,$  for any even $n\ge 4$,  where the nonderogatory   $M_{0,1}\in\mathfrak{gl}(n,\mathbb R)$ is defined, in the canonical basis $(\tilde e_1,\dots,\tilde e_n)$ of $\mathbb R^n$, as
 \begin{eqnarray} \label{Ms-Mn-M01} M_{0,1}= M_s + M_n,  \text{ where } M_s=-\sum\limits_{j=0}^{\frac{n}{2}-1}(E_{2j+1,2j+2}-E_{2j+2,2j+1}),\; M_n=\sum\limits_{j=1}^{n-2}E_{j,j+2}.
\end{eqnarray} 
Hence, $\chi_{\min,M_{0,1}}(X)=\chi_{M_{0,1}}(X)=(X^2+1)^{\frac{n}{2}}$, so  $i$ and  $-i$  are the only (${\frac{n}{2}}$ times repeated complex conjugate) eigenvalues of $M_{0,1}$ and $\mathbb R[M_{0,1}]$ is a MASA of $\mathfrak{gl}(n,\mathbb R)$.
In the basis  $(e_1,e_2,\dots,e_{2n})$ of $\mathfrak{D}_{0,1}^n$, with  $e_j:=(M_{0,1})^{j-1}$, $e_{n+j}=\tilde e_{j}$, $j=1,\dots, n$, 
the Lie bracket reads, for any $ j, l=1,\dots,n,$
\begin{eqnarray}\label{eq:forgottenLiealgebra} 
&&[e_j,e_{n+l}]=(M_{0,1})^{j-1}\tilde e_l  \;.
\end{eqnarray}

\subsection{The classification Theorem}

In Theorem \ref{thm:classification}, we completely classify all $2$-solvable Frobenius Lie algebras  of the form 
$\mathcal G_\phi:=\mathbb R[\phi]\ltimes V$, where $\phi\in\mathfrak{gl}(V)$ is  nonderogatory and $V$ a real vector space of dimension $n$. In particular, we show that  the Lie algebras $\mathfrak{D}_0^p$,  $\mathfrak{D}_{0,1}^{2p}$, where $p\ge 1$ is an integer, discussed in Section \ref{sect:examples-nonderogatory}, are the  building blocks that make up, in a trivial way (direct sums), the Lie algebras  $\mathcal G_\phi$. 
  As in Examples \ref{ex:D0n},  \ref{ex:D01n}, the notations $\mathfrak{D}_0^1:=\mathfrak{aff}(\mathbb R)$ and  $\mathfrak{D}_{0,1}^2:=\mathfrak{aff}(\mathbb C)$, are implicitly adopted.
It is obvious that, if $z,\bar z$ are two complex conjugate eigenvalues of $\phi$, then $(X-Re(z))^2+Im(z)^2$ divides $\chi_\phi(X)$ (Lemma \ref{lemma:factorization}). 
\begin{definition}We say that  $2$ complex conjugate eigenvalues $z,\bar z$ of $\phi\in\mathfrak{gl}(V)$ are of multiplicity  $m$, if $m$ is the largest integer such that  $\Big( (X-Re(z))^2+Im(z)^2\Big)^m$ is a factor of  $\chi_\phi(X)$.
\end{definition}
We call the following, the Factorization Lemma.
\begin{lemma}[Factorization Lemma]\label{lemma:factorization} Let $V$ be a real  vector space,   $\dim V=n$. Suppose a nonderogatory  $\phi\in\mathfrak{gl}(V)$ has $p$ distinct real eigenvalues $\lambda_j,$  with multiplicity $k_j$, $j=1,\dots,p$ and
 $2q$ complex eigenvalues $\lambda_{p+l},\bar\lambda_{p+l}$, with multiplicity $k_{p+{l}}$, $l=1,\dots,q,$ where the equality $n=k_1+\dots+ k_p+2(k_{p+1}+\dots+k_{p+q})$ holds. Then the characteristic polynomial  of $\phi$ factorizes as 
 \beqn\label{eq:factorization}
  \chi_\phi(X)=\displaystyle\prod_{j=1}^{p}(X-\lambda_j)^{k_j}  \prod_{l=1}^{q} \Big((X-Re(\lambda_{p+l}))^2+Im(\lambda_{p+l})^2\Big)^{k_{p+l}}.
\eeqn
 In particular, if $\phi$  has only $2$ eigenvalues which are both complex 
 $\lambda,$ $\bar\lambda$, then 
\beqn \chi_\phi(X)=\Big((Re(\lambda)-X)^2+Im(\lambda)^2\Big)^{\frac{n}{2}}.\eeqn
 \end{lemma}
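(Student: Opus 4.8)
The plan is to treat $\chi_\phi$ purely as an element of $\R[X]$ and to reduce the claim to the standard factorization of a real polynomial into its real irreducible factors, the new content being only the identification of those factors with the eigenvalue data. First I would fix any basis of $V$, so that $\phi$ is represented by a real $n\times n$ matrix and $\chi_\phi(X)=\det(X\,\mathbb I_V-\phi)$ is a monic polynomial of degree $n$ with \emph{real} coefficients. By definition the complex eigenvalues of $\phi$ are the roots of $\chi_\phi$ in $\C$, the real eigenvalues $\lambda_1,\dots,\lambda_p$ being exactly its real roots, and $k_j$ the algebraic multiplicity of $\lambda_j$.

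Next I would split the spectrum. Over $\C$ the Fundamental Theorem of Algebra gives a complete factorization $\chi_\phi(X)=\prod_\mu (X-\mu)^{m_\mu}$ over the distinct complex roots $\mu$. Since $\chi_\phi\in\R[X]$, the coefficientwise conjugation $\sum a_iX^i\mapsto\sum\bar a_iX^i$, which is a ring automorphism of $\C[X]$ fixing $\R[X]$ pointwise, fixes $\chi_\phi$; applying it to the factorization sends $(X-\mu)\mapsto(X-\bar\mu)$ and, by uniqueness of factorization, forces $m_{\bar\mu}=m_\mu$. Hence every non-real root $\mu=a+bi$ with $b\neq0$ is paired with its conjugate $\bar\mu=a-bi$ at the same multiplicity; this common multiplicity is precisely the integer $k_{p+l}$ attached to the pair $\lambda_{p+l},\bar\lambda_{p+l}$.

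I would then recombine each conjugate pair into a real quadratic via $(X-\mu)(X-\bar\mu)=X^2-2\,Re(\mu)X+|\mu|^2=(X-Re(\mu))^2+Im(\mu)^2$, which is irreducible over $\R$ because $Im(\mu)\neq0$. Grouping the real linear factors $(X-\lambda_j)^{k_j}$ with these quadratic factors raised to the powers $k_{p+l}$ yields exactly the asserted formula (\ref{eq:factorization}); matching degrees on both sides reproduces the relation $n=k_1+\dots+k_p+2(k_{p+1}+\dots+k_{p+q})$ and confirms the factorization is complete. For the particular case there are no real eigenvalues ($p=0$) and a single conjugate pair ($q=1$), so $\chi_\phi$ is a single irreducible quadratic raised to a power $k$; the degree count gives $2k=n$, i.e. $k=n/2$, and since $(X-Re(\lambda))^2=(Re(\lambda)-X)^2$ this is $\chi_\phi(X)=\big((Re(\lambda)-X)^2+Im(\lambda)^2\big)^{n/2}$.

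There is no genuine obstacle here: the whole argument rests on the single fact that $\chi_\phi$ has real coefficients, which both forces the conjugate pairing of the non-real roots and equalizes their multiplicities. The only point worth flagging is that this factorization is valid for an arbitrary real endomorphism and does not in fact invoke nonderogatoriness; that hypothesis is carried along merely to match the ambient setting of the surrounding classification, where by Lemma \ref{nonderogatorymatrices} one additionally has $\chi_\phi=\chi_{\min,\phi}$ and each eigenvalue contributes a single Jordan block.
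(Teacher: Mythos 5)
Your proof is correct, but it follows a genuinely different route from the paper's. You work entirely inside $\mathbb C[X]$: the Fundamental Theorem of Algebra gives the full splitting of $\chi_\phi$ over $\mathbb C$, the coefficientwise-conjugation automorphism (which fixes $\chi_\phi$ because its coefficients are real) pairs each non-real root with its conjugate at equal multiplicity by uniqueness of factorization, and recombining each pair gives the irreducible real quadratics $(X-Re(\lambda_{p+l}))^2+Im(\lambda_{p+l})^2$; the degree count then recovers $n=k_1+\dots+k_p+2(k_{p+1}+\dots+k_{p+q})$. The paper instead argues linear-algebraically: it first treats the case of a single conjugate pair of eigenvalues by inductively peeling off one copy of the real quadratic at a time (each quotient $P_j$ being a real polynomial whose complex roots must again be $\lambda,\bar\lambda$), and then handles the general case by invoking the Primary Decomposition Theorem to split $V$ into $\phi$-invariant subspaces on which $\phi$ has either a unique eigenvalue or a single conjugate pair. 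Your argument is more elementary and self-contained (no decomposition of $V$ is needed, only polynomial algebra), and it makes transparent the point, which the paper only records as a remark after its proof, that nonderogatoriness plays no role in Lemma~\ref{lemma:factorization}. What the paper's heavier route buys is that the very decomposition of $V$ used in its proof is the one reused immediately afterwards (via Lemma~\ref{lem:splitting}) in the proofs of Theorem~\ref{thm:classification} and Theorem~\ref{thm:Jordanization}, so the lemma and its proof there serve double duty as a template for the classification argument.
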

\begin{proof} 
Suppose a nonderogatory $\phi\in\mathfrak{gl}(V)$ has only $2$ eigenvalues which are both complex $\lambda,$ $\bar\lambda$.  Since $(X-\lambda)(X-\bar\lambda)=(X-Re(\lambda))^2+Im(\lambda)^2$ divides $\chi_\phi(X),$ we therefore write $\chi_\phi(X)=\Big((X-Re(\lambda))^2+Im(\lambda)^2\Big)P_1(X)$ 
where $P_1(X)$ is  a polynomial of degree $n-2,$ with real coefficients. As $\mathbb C$ is a closed field, $P_1(X)$ admits some complex zeros, bound to be 
 $\lambda,$ $\bar\lambda$ since they are the only zeros of $\chi_\phi(X)$, by hypothesis. We now re-write  $\chi_\phi(X) $ as $\chi_\phi(X) =\Big((X-Re(\lambda))^2+Im(\lambda)^2\Big)^2P_2(X)$ 
where $P_2(X)$ is  a polynomial of degree $n-4$, with real coefficients. The result follows by inductively reapplying the same process to $P_2$. The proof of the general case immediately follows by applying the Primary Decomposition Theorem to $\chi_\phi(X)$ to reduce the problem to the cases where $\phi$ admits a unique eigenvalue or only two eigenvalues which are both complex and conjugate, as above.
\end{proof}
Note that  Lemma \ref{lemma:factorization} is still valid even if $\phi$ is not nonderogatory.
In Theorem \ref{thm:classification} below, $V$ is a real vector space of dimension $n$.
 \begin{theorem}\label{thm:classification}Let   $\phi\in\mathfrak{gl}(V)$ be nonderogatory with $p$ real distinct eigenvalues $\lambda_j$ with multiplicities $k_j,$ $j=1,\dots,p$ and $2q$ distinct complex  eigenvalues $z_l,\bar z_l,$ with multiplicities $m_l,$ $l=1,\dots,q$ where $n=k_1+\dots+k_p+2(m_1+\dots+m_q)$.

Then the Lie algebra 
 $\mathcal G_\phi:=\mathbb R[\phi]\ltimes V$  is isomorphic to the direct sum 
 of the Lie algebras  
${\mathfrak D}_{0}^{k_1}, $ $\dots,$ ${\mathfrak D}_{0}^{k_p}$, $ {\mathfrak D}_{0,1}^{2m_1},$ $ \dots,$ ${\mathfrak D}_{0,1}^{2m_q}.$
In particular, 
\begin{itemize}
\item (a) if $p=n,$ then $\mathcal G_\phi$  is isomorphic to the direct sum 
 of $n$ copies of $\mathfrak{aff}(\mathbb R) $,

\item (b) if $p=1$ and $q=0,$ then  $\mathcal G_\phi$ is isomorphic to ${\mathfrak D}_{0}^{n}$,

\item (c)  if $2q=n,$ then 
 $\mathcal G_\phi$  is  isomorphic to 
the direct sum 
of   $\frac{n}{2}$
 copies of  $\mathfrak{aff}(\mathbb C) $,

\item (d)  if $p=0$ and $q=1,$ then  $\mathcal G_\phi$ is isomorphic to ${\mathfrak D}_{0,1}^{n}$.
\end{itemize}
\end{theorem}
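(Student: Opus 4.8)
The plan is to reduce the general case to the two basic building blocks $\mathfrak{D}_0^{k}$ and $\mathfrak{D}_{0,1}^{2m}$ by decomposing $V$ according to the eigenstructure of $\phi$, and then to invoke the isomorphism criterion of Remark \ref{rmq:nonderogatory} together with the elementary fact that direct sums of Lie algebras correspond to block-diagonal matrices. First I would apply the Factorization Lemma (Lemma \ref{lemma:factorization}) to write
\begin{eqnarray}
\chi_\phi(X)=\prod_{j=1}^{p}(X-\lambda_j)^{k_j}\prod_{l=1}^{q}\Big((X-Re(z_l))^2+Im(z_l)^2\Big)^{m_l}.\nonumber
\end{eqnarray}
Since $\phi$ is nonderogatory, its minimal and characteristic polynomials coincide, so these distinct irreducible factors are pairwise coprime. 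The Primary Decomposition Theorem then yields a $\phi$-invariant direct sum decomposition $V=V_1\oplus\dots\oplus V_p\oplus W_1\oplus\dots\oplus W_q$, where $V_j=\ker\big((\phi-\lambda_j\mathbb I_V)^{k_j}\big)$ has dimension $k_j$ and $W_l=\ker\big(((\phi-Re(z_l)\mathbb I_V)^2+Im(z_l)^2\mathbb I_V)^{m_l}\big)$ has dimension $2m_l$.

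Next I would show that the restriction $\phi|_{V_j}$ is itself nonderogatory on $V_j$ with single real eigenvalue $\lambda_j$ of full multiplicity $k_j$, and likewise $\phi|_{W_l}$ is nonderogatory on $W_l$ with the single complex-conjugate pair $z_l,\bar z_l$ of multiplicity $m_l$. This follows because the minimal polynomial of each restriction is exactly the corresponding primary factor, whose degree equals the dimension of the block. Consequently $\phi$ is conjugate, via a block-diagonal change of basis, to the block matrix $\mathrm{diag}(\phi_1,\dots,\phi_p,\psi_1,\dots,\psi_q)$ where each $\phi_j$ is nonderogatory with the single real eigenvalue $\lambda_j$ and each $\psi_l$ is nonderogatory with the single conjugate pair. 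For such single-eigenvalue nonderogatory blocks one checks that $\mathbb R[\phi_j]\ltimes V_j\cong\mathfrak{D}_0^{k_j}$ and $\mathbb R[\psi_l]\ltimes W_l\cong\mathfrak{D}_{0,1}^{2m_l}$: in the real case, subtracting $\lambda_j\mathbb I$ sends $\phi_j$ to a single nilpotent Jordan block, which is conjugate to $M_0$ of Example \ref{ex:D0n}, and the scalar shift only adds a multiple of $\mathbb I$ that does not change $\mathbb R[\phi_j]$ nor the isomorphism type; the complex case reduces similarly to $M_{0,1}$ of Example \ref{ex:D01n} after an analogous normalization.

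The final structural point is that for a matrix in block-diagonal form, $\mathbb R[\phi]$ splits as the corresponding block-diagonal subalgebra, and the semidirect product respects the block decomposition, so
\begin{eqnarray}
\mathbb R[\phi]\ltimes V\;\cong\;\big(\mathbb R[\phi_1]\ltimes V_1\big)\oplus\dots\oplus\big(\mathbb R[\psi_q]\ltimes W_q\big),\nonumber
\end{eqnarray}
each summand being an ideal. Combining this with the identifications of the previous paragraph gives the claimed isomorphism $\mathcal G_\phi\cong\mathfrak{D}_0^{k_1}\oplus\dots\oplus\mathfrak{D}_0^{k_p}\oplus\mathfrak{D}_{0,1}^{2m_1}\oplus\dots\oplus\mathfrak{D}_{0,1}^{2m_q}$, and specializations (a)--(d) are immediate. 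The main obstacle I anticipate is the complex case: verifying carefully that a single-pair nonderogatory block is conjugate in $\mathfrak{gl}(2m,\mathbb R)$ to the specific $M_{0,1}$ of \eqref{Ms-Mn-M01}, i.e.\ that the real Jordan form for a repeated conjugate pair matches the generator used to define $\mathfrak{D}_{0,1}^{2m}$. This is essentially the real Jordanization of nonderogatory maps, and I would lean on Theorem \ref{thm:Jordanization} to guarantee the required canonical form, after which the identification with $M_{0,1}$ becomes a direct comparison of companion-type blocks.
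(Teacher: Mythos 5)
Your proposal is correct and follows essentially the same route as the paper's proof: factor $\chi_\phi$ via the Factorization Lemma (Lemma \ref{lemma:factorization}), use the Primary Decomposition Theorem to split $V$ into $\phi$-invariant primary components on which the restrictions are again nonderogatory, split $\mathcal G_\phi$ accordingly into a direct sum of ideals (the paper's Lemma \ref{lem:splitting}, which you replace by the equivalent block-diagonal splitting of $\mathbb R[\phi]$, valid here by a dimension count since each restriction is nonderogatory), and identify each block with $\mathfrak{D}_0^{k_j}$ or $\mathfrak{D}_{0,1}^{2m_l}$ via the real Jordan form of Theorem \ref{thm:Jordanization} together with the invariance of the polynomial algebra under affine normalization. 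The only cosmetic difference is in the complex non-diagonalizable block: the paper keeps $M_z=r\mathbb I+sM_s+M_n$ and shows $\mathbb R[M_z]=\mathbb R[M_{0,1}]$ because $M_s$ and $M_n$ are the semisimple and nilpotent parts of $M_{0,1}$ (hence polynomials in it), whereas you first rescale to eigenvalues $\pm i$ and then invoke the Jordan form; both arguments are sound.
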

The following is a direct corollary of Theorem \ref{thm:classification}.
\begin{corollary}\label{corollary1} Let $\phi\in\mathfrak{gl}(V)$ be nonderogatory. 
The  Lie algebra $\mathcal G_\phi$ is indecomposable if and only if one of the following holds true: 

(a) $\phi$ has a unique eigenvalue, in which case $\mathcal G_\phi$ is completely solvable, or

 (b)  $\phi$ has only 2 eigenvalues which are both complex.
\end{corollary}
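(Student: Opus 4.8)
The plan is to derive Corollary \ref{corollary1} directly from the classification provided by Theorem \ref{thm:classification}, using the characterization of indecomposability given in Remark \ref{rmq:indecomposable}. Since Theorem \ref{thm:classification} tells us that $\mathcal G_\phi$ is isomorphic to a direct sum of the building blocks $\mathfrak D_0^{k_1},\dots,\mathfrak D_0^{k_p},\mathfrak D_{0,1}^{2m_1},\dots,\mathfrak D_{0,1}^{2m_q}$, the question of indecomposability of $\mathcal G_\phi$ reduces to counting the summands and verifying that each individual building block is itself indecomposable. By Remark \ref{rmq:indecomposable}, a direct sum of $2$-solvable Frobenius Lie algebras is decomposable precisely when it has at least two summands (each of which is a nonzero ideal), so $\mathcal G_\phi$ is indecomposable if and only if there is exactly one block in the decomposition.

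First I would translate the eigenvalue data into a count of summands. The total number of blocks is $p+q$, where $p$ is the number of distinct real eigenvalues and $q$ is the number of distinct complex-conjugate pairs. Thus $\mathcal G_\phi$ reduces to a single block exactly when $p+q=1$, which splits into the two mutually exclusive cases $(p,q)=(1,0)$ and $(p,q)=(0,1)$. The case $(p,q)=(1,0)$ means $\phi$ has a unique (real) eigenvalue, and by Theorem \ref{thm:classification}(b) one gets $\mathcal G_\phi\cong \mathfrak D_0^n$; this is alternative (a) of the corollary, noting that a single real eigenvalue forces $\phi$, hence $\mathcal G_\phi$, to be completely solvable since $\mathbb R[\phi]$ then acts with only real eigenvalues. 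The case $(p,q)=(0,1)$ means $\phi$ has exactly two eigenvalues, both complex and conjugate, and by Theorem \ref{thm:classification}(d) one gets $\mathcal G_\phi\cong \mathfrak D_{0,1}^n$; this is alternative (b).

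The remaining point, and the only place requiring genuine argument rather than bookkeeping, is to confirm that each single building block $\mathfrak D_0^k$ and $\mathfrak D_{0,1}^{2m}$ really is indecomposable as a Lie algebra, so that a one-block decomposition cannot be refined further. For this I would appeal to the structure described in Examples \ref{ex:D0n} and \ref{ex:D01n}: each such block has a codimension-one nilradical that is $k$-nilpotent (resp. has a single Jordan-type structure reflecting the single eigenvalue or single conjugate pair), and any decomposition $\mathcal G=\mathcal I_1\oplus \mathcal I_2$ into nonzero ideals would force a corresponding nontrivial splitting of the cyclic $\mathbb R[\phi]$-module $V$ as a direct sum of $\phi$-invariant subspaces — contradicting that $\phi$ is nonderogatory with a single eigenvalue (resp. single conjugate pair), since a nonderogatory operator restricted to such a spectrum is cyclic and admits no nontrivial invariant complement. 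Concretely, a splitting of the derived ideal $[\mathcal G,\mathcal G]=V$ compatible with $\mathbb R[\phi]$ would contradict the single-Jordan-block property guaranteed by Lemma \ref{nonderogatorymatrices}(6).

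The main obstacle I anticipate is making the indecomposability of the individual blocks fully rigorous rather than merely plausible: one must rule out \emph{all} ideal decompositions, not just those respecting the given grading into $\mathbb R[\phi]$ and $V$. The cleanest route is to observe that in any direct sum decomposition $\mathcal G_\phi=\mathcal I_1\oplus\mathcal I_2$ of a $2$-solvable Frobenius Lie algebra, the derived ideal $V=[\mathcal G_\phi,\mathcal G_\phi]$ must split as $(V\cap\mathcal I_1)\oplus(V\cap\mathcal I_2)$ into $\phi$-invariant subspaces, and the complementary abelian part must split $\mathbb R[\phi]$ accordingly; a single Jordan block (one eigenvalue or one conjugate pair) admits no such nontrivial splitting, so one factor is forced to vanish. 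Granting the elementary module-theoretic fact that a cyclic module over $\mathbb R[\phi]$ with $\phi$ having a single eigenvalue (or conjugate pair) is indecomposable, the corollary follows immediately by combining this with the summand count above.
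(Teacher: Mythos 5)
Your proposal is correct and follows the same route the paper takes: the paper presents this statement as a direct corollary of Theorem \ref{thm:classification}, with indecomposability read off from the number of summands in the decomposition, exactly as in your bookkeeping step (at least two blocks is decomposable by definition; one block occurs precisely when $(p,q)=(1,0)$ or $(0,1)$, giving cases (a) and (b) respectively). The one place where you go beyond the paper is the indecomposability of the individual blocks $\mathfrak{D}_0^n$ and $\mathfrak{D}_{0,1}^n$: the paper never proves this (it is asserted for $\mathfrak{D}_{0,1}^n$ only later, in Section \ref{sect:more-on-D01n}, and is left implicit for $\mathfrak{D}_0^n$), whereas you supply the needed argument --- in any splitting $\mathcal G_\phi=\mathcal I_1\oplus\mathcal I_2$ into nonzero ideals, neither ideal can be Abelian (an Abelian direct summand would be central, and a Frobenius Lie algebra has trivial center), so $V=(V\cap\mathcal I_1)\oplus(V\cap\mathcal I_2)$ is a nontrivial splitting into $\phi$-invariant subspaces, impossible when $\chi_{\min,\phi}=\chi_\phi$ is a power of a single irreducible polynomial. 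That module-theoretic step is the only genuine mathematical content in the ``if'' direction, and your proof is the more complete one for making it explicit. One small inaccuracy: you attribute to Remark \ref{rmq:indecomposable} the statement that a direct sum with at least two summands is decomposable; that is just the definition of decomposability (the remark is a uniqueness-of-decomposition statement), but this misattribution is harmless to the argument.
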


The rest of this section and Sections \ref{Sect:uniqueeigenvalue}, \ref{Sect:complexeigenvalues} are mainly concerned with discussions and the proof of Theorem \ref{thm:classification}.
Lemma \ref{lem:splitting} allows us to  split  the proof into two main cases discussed in Propositions \ref{prop:classification-multiple-eigenvalues}, \ref{prop:classification-multiple-complexeigenvalues}, \ref{prop:isomorphismcomplexeigenvaluesnonsemiimple}, \ref{prop:all-complex-eigenvalues-nondiagonalizable}.
Part (a) of  Theorem \ref{thm:classification} is obtained by taking, in the general case, $p=n$, $q=0$, with the identification $\mathfrak{D}_0^1=\mathfrak{aff}(\mathbb R)$. A direct proof (for nonzero eigenvalues) is also presented in  Proposition \ref{ex:onlyrealeigenvalues}.
Part  (b) can be directly found in Lemma \ref{lem:uniquerealeigenvalue}, whereas  parts  (c)  and (d) are directly proved in
Propositions \ref{prop:classification-multiple-complexeigenvalues} and \ref{prop:isomorphismcomplexeigenvaluesnonsemiimple}, respectively.

\begin{lemma}\label{lem:splitting}
Suppose $V= \mathcal{E}_1\oplus\mathcal{E}_2$ and $\phi\mathcal{E}_j\subset \mathcal E_j,$ $j=1,2,$ where $\phi\in\mathfrak{gl}(V)$ is nonderogatory. 
Let  $\mathcal G_{\phi_j}:=\mathbb R[\phi_j]\ltimes \mathcal{E}_j$, where $\phi_j$ is the restriction of $\phi$ to $\mathcal{E}_j$.
Then both $\mathcal G_{\phi_1}$, $\mathcal G_{\phi_2}$ are ideals of $\mathcal G_{\phi}$ and, in fact, $\mathcal G_{\phi}=\mathcal G_{\phi_1}\oplus \mathcal G_{\phi_2}$.
\end{lemma}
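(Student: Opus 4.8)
The plan is to unwind the definitions and show directly that the given decomposition of the vector space, which is $\phi$-invariant by hypothesis, forces a corresponding decomposition of $\mathcal G_\phi$ into a direct sum of ideals. The key structural fact I would first establish is the compatibility of the polynomial algebras: since $\phi\mathcal E_j\subset\mathcal E_j$, each power $\phi^k$ preserves $\mathcal E_j$, so the restriction $\phi_j=\phi|_{\mathcal E_j}$ is well defined and $\mathbb R[\phi]$ acts on each $\mathcal E_j$. I would note that because $\phi$ is nonderogatory, so are $\phi_1$ and $\phi_2$ (their minimal and characteristic polynomials coincide on each block, as they are the factors of $\chi_\phi$ corresponding to the invariant decomposition), and hence each $\mathcal G_{\phi_j}=\mathbb R[\phi_j]\ltimes\mathcal E_j$ is itself a well-formed $2$-solvable Frobenius Lie algebra by Remark \ref{rmq:nonderogatory}.

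First I would set up the identification that makes the direct sum statement precise. Writing the restriction map $r_j:\mathbb R[\phi]\to\mathbb R[\phi_j]$, $P(\phi)\mapsto P(\phi_j)$, I would identify $\mathcal G_{\phi_j}$ with the subspace $\{(r_j(a),x):a\in\mathbb R[\phi],\,x\in\mathcal E_j\}$ of $\mathcal G_\phi$; more concretely, since $V=\mathcal E_1\oplus\mathcal E_2$ and any $a\in\mathbb R[\phi]$ decomposes as $a=a|_{\mathcal E_1}\oplus a|_{\mathcal E_2}$, the polynomial algebra $\mathbb R[\phi]$ embeds diagonally and the two summands $\mathbb R[\phi_1]$, $\mathbb R[\phi_2]$ sit inside $\mathbb R[\phi]\subset\mathfrak{gl}(V)$ as the operators acting as $\phi_j$ on $\mathcal E_j$ and as $0$ on the complementary block. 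The central computation is then to check the brackets. For $a\in\mathbb R[\phi_1]$ (viewed as vanishing on $\mathcal E_2$) and $x\in\mathcal E_2$, I would verify $[a,x]=ax=0$, and symmetrically that elements of $\mathcal G_{\phi_1}$ and $\mathcal G_{\phi_2}$ bracket to zero. Combined with the fact that each $\mathcal G_{\phi_j}$ is a subalgebra and that the two subspaces span $\mathcal G_\phi$ and intersect trivially, this yields $\mathcal G_\phi=\mathcal G_{\phi_1}\oplus\mathcal G_{\phi_2}$ as a direct sum of ideals.

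The one genuine subtlety — the step I expect to require the most care — is the correct identification of $\mathbb R[\phi]$ with $\mathbb R[\phi_1]\oplus\mathbb R[\phi_2]$ as associative algebras, so that the vector-space dimensions add up and the embedding of each $\mathcal G_{\phi_j}$ is faithful. This is exactly where nonderogatoriness is essential: in general $\dim\mathbb R[\phi]=\deg\chi_{\min,\phi}$, and one needs $\chi_{\min,\phi}=\chi_{\min,\phi_1}\chi_{\min,\phi_2}$ with $\chi_{\min,\phi_1}$ and $\chi_{\min,\phi_2}$ coprime in order for the Chinese Remainder Theorem to give $\mathbb R[\phi]\cong\mathbb R[\phi_1]\times\mathbb R[\phi_2]$ and hence $\dim\mathbb R[\phi]=k_1+k_2=n$ matching $\dim\mathcal E_1+\dim\mathcal E_2$. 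Because $\phi$ is nonderogatory, $\chi_{\min,\phi}=\chi_\phi$ and the invariant subspaces $\mathcal E_1,\mathcal E_2$ carry coprime characteristic factors, so this coprimality holds and the decomposition of the polynomial algebra is genuine rather than merely a surjection. Once this identification is in place, everything else is a routine check of the bracket relations using $\phi\mathcal E_j\subset\mathcal E_j$, and the conclusion follows.
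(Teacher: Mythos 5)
Your proposal is correct, and it reaches the decomposition by a mechanism that is genuinely different from the paper's. The paper extends each restriction $\phi_j$ by zero on the complementary block to get $\tilde\phi_j\in\mathfrak{gl}(V)$, observes that $[\tilde\phi_j,\phi]=0$, and invokes the centralizer characterization of nonderogatory operators (Lemma \ref{nonderogatorymatrices}, items (3)--(4)) to conclude $\tilde\phi_1,\tilde\phi_2\in\mathbb R[\phi]$; the bracket verifications are then essentially the ones you describe. You instead split the associative algebra $\mathbb R[\phi]\cong\mathbb R[X]/(\chi_{\phi_1}\chi_{\phi_2})\cong\mathbb R[\phi_1]\times\mathbb R[\phi_2]$ by the Chinese Remainder Theorem, using that nonderogatoriness of $\phi$ forces the block characteristic polynomials to be coprime and each $\phi_j$ to be nonderogatory; both facts follow simultaneously from the degree count $\deg\,\mathrm{lcm}(\chi_{\min,\phi_1},\chi_{\min,\phi_2})=\deg\chi_{\min,\phi}=k_1+k_2$, which is worth writing out explicitly, since your parenthetical justification of the nonderogatoriness of the $\phi_j$ reads as a restatement rather than an argument. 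As to what each approach buys: the paper's centralizer argument is shorter and reuses a lemma already on record, while your CRT argument makes the dimension bookkeeping explicit and, more importantly, identifies the ideals correctly. Your linear parts are the embedded copies of $\mathbb R[\phi_j]$ consisting of operators that vanish on the complementary block, and these genuinely yield ideals. By contrast, the paper's subspaces $\mathbb R[\tilde\phi_j]\ltimes\mathcal E_j$ contain $\mathbb I_{V}=\tilde\phi_j^{\,0}$, so for $0\neq x\in\mathcal E_2$ one has $[\mathbb I_{V},x]=x\notin\mathbb R[\tilde\phi_1]\oplus\mathcal E_1$: as literally written these are subalgebras but not ideals, and their dimensions need not equal $\dim\mathcal G_{\phi_j}$ (when $0$ is not an eigenvalue of $\phi_1$, $\dim\mathbb R[\tilde\phi_1]=k_1+1$). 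So your route is not only valid but quietly repairs an imprecision in the paper's own proof.
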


\begin{proof} Let us extend $\phi_{j}$ to the linear map $\tilde {\phi}_{j}$ on 
$V$ such that $\tilde{\phi}_{j}(\mathcal{E}_{p})=0$ if $p\neq j.$ So  $[\tilde {\phi}_{1} , \phi] =[\tilde {\phi}_{2} , \phi] =0$ and hence
 $\tilde{\phi}_{1},$ $\tilde{\phi}_{2}\in \mathbb R[\phi]$. More precisely $\mathbb R[\tilde \phi_j]$ is a subalgebra of $\mathbb R[\phi]$. We thus see that $\mathcal  G_{\phi_{j}}$, identified with   
$\mathcal G_{{\tilde \phi}_{j}} =\mathbb R[\tilde{\phi}_{j}]\ltimes \mathcal{E}_{j}$, is a Lie subalgebra of $\mathcal G_\phi:=\mathbb R[\phi]\ltimes V.$ As a matter of fact,  each $\mathcal  G_{\phi_j}$ is an ideal of $\mathcal G_\phi$.  This directly follows from the combination of the following properties $\phi= \tilde \phi_1+\tilde \phi_2,$ $\mathbb R^n=\mathcal E_1\oplus\mathcal E_2,$ 
$\phi\mathcal{E}_j\subset \mathcal E_j,$ $j=1,2$  and 
$\tilde{\phi}_{j}(\mathcal{E}_{p})=0$ if $p\neq j.$
Thus, as the  ideals $\mathcal{G}_{\phi_j}$ form a direct sum (they only meet at $\{0\}$, unless they are identical), we get $\mathcal{G}:=\mathcal G_{{\tilde \phi}_{1}}\oplus \mathcal G_{{\tilde \phi}_{2}}$.
\end{proof}

Consider the general case where  $\phi\in\mathfrak{gl}(V)$ admits $p$ real and $2q$ complex eigenvalues.  We write $\chi_\phi(X)$ as the product 
 $\chi_\phi(X) = Q_1(X)Q_2(X)$ where $Q_1(X)$ has only complex (nonreal) zeros, whereas  the zeros of  $Q_2(X)$ are all real.
Of course as $Q_1(X)$ and $Q_2(X)$ must be relatively prime, the 
Primary Decomposition Theorem combined with Cayley-Hamilton theorem imply the following
\begin{eqnarray}V=\ker(\chi_\phi(\phi)) = \ker(Q_1(\phi))\oplus \ker(Q_2(\phi)). \end{eqnarray} 
As $\ker Q_1(\phi)$ and $\ker Q_2(\phi)$  are both stable by $\phi$, Lemma \ref{lem:splitting} reduces the proof of Theorem \ref{thm:classification} to two cases: the case where all the eigenvalues of $\phi$ are real and the case where all the eigenvalues of $\phi$ are complex. 

\subsection{Nonderogatory $\phi\in\mathfrak{gl}(V)$ with only real eigenvalues}\label{Sect:uniqueeigenvalue}

If a nonderogatory $\phi\in\mathfrak{gl}(V)$ has only real eigenvalues $\lambda_j$
of respective multiplicity $k_j$, $j=1,\dots,p$, then  its restriction $\phi_j$ to each subspace $\mathcal E_j:=\ker(\phi-\lambda_j)^{k_j}$ 
is again nonderogatory with 
a unique eigenvalue $\lambda_j$ of multiplicity $k_j$ and $V $ splits as $V = \mathcal E_1\oplus\cdots\oplus \mathcal E_p.$

\begin{definition}[Notation]\label{def:D0n}
When a nonderogatory $\phi\in\mathfrak{gl}(V)$ has a unique real eigenvalue $\lambda$ of multiplicity $n$, we set $\mathfrak{D}_\lambda^n:=\mathcal G_\phi=\mathbb R[\phi]\ltimes V.$
\end{definition}
\begin{lemma}\label{lem:multiplerealeigenvalues}Suppose a nonderogatory $\phi\in\mathfrak{gl}(V)$ has $p$ distinct eigenvalues $\lambda_1, \dots,\lambda_p,$  all of which are real and of respective multiplicity $k_1,\dots, k_p$, where
$k_1+\cdots+ k_p=n.$ Then $\mathcal G_\phi$ is isomorphic to the direct sum 
$ \mathfrak{D}_{\lambda_1}^{k_1}\oplus \cdots \oplus \mathfrak{D}_{\lambda_p}^{k_p}$.
\end{lemma}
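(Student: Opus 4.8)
The plan is to reduce the multi-eigenvalue case to the single-eigenvalue case via the splitting Lemma \ref{lem:splitting}, which is precisely the tool that has been set up for this purpose. Since $\phi$ is nonderogatory with real eigenvalues $\lambda_1,\dots,\lambda_p$ of multiplicities $k_1,\dots,k_p$, the Primary Decomposition Theorem applied to the characteristic polynomial $\chi_\phi(X)=\prod_{j=1}^p(X-\lambda_j)^{k_j}$ yields the $\phi$-invariant decomposition $V=\mathcal E_1\oplus\cdots\oplus\mathcal E_p$, where $\mathcal E_j=\ker(\phi-\lambda_j)^{k_j}$. This decomposition is already recorded in the discussion immediately preceding Definition \ref{def:D0n}.

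First I would handle the two-summand case directly from Lemma \ref{lem:splitting}. Writing $V=\mathcal E_1\oplus(\mathcal E_2\oplus\cdots\oplus\mathcal E_p)$, with $\phi$ preserving each factor, Lemma \ref{lem:splitting} gives $\mathcal G_\phi=\mathcal G_{\phi_1}\oplus\mathcal G_{\phi'}$, where $\phi_1$ is the restriction of $\phi$ to $\mathcal E_1$ and $\phi'$ its restriction to $\mathcal E_2\oplus\cdots\oplus\mathcal E_p$. By construction $\phi_1$ is nonderogatory on $\mathcal E_1$ with the single eigenvalue $\lambda_1$ of multiplicity $k_1$, so by Definition \ref{def:D0n} we have $\mathcal G_{\phi_1}=\mathfrak D_{\lambda_1}^{k_1}$. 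Then I would iterate: applying the same argument to $\phi'$ on $\mathcal E_2\oplus\cdots\oplus\mathcal E_p$ splits off $\mathfrak D_{\lambda_2}^{k_2}$, and an induction on $p$ (the base case $p=1$ being exactly Definition \ref{def:D0n}) completes the decomposition $\mathcal G_\phi\cong\mathfrak D_{\lambda_1}^{k_1}\oplus\cdots\oplus\mathfrak D_{\lambda_p}^{k_p}$.

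The one genuine point requiring care is that the restriction $\phi_j$ of a nonderogatory map to an invariant primary component remains nonderogatory, with the stated eigenvalue and multiplicity. This follows because, for a nonderogatory $\phi$, the minimal polynomial equals the characteristic polynomial, and on $\mathcal E_j=\ker(\phi-\lambda_j)^{k_j}$ the minimal polynomial of $\phi_j$ is $(X-\lambda_j)^{k_j}$ while its characteristic polynomial is also $(X-\lambda_j)^{k_j}$ since $\dim\mathcal E_j=k_j$ (the algebraic multiplicity). Equivalently, by criterion (6) of Lemma \ref{nonderogatorymatrices}, $\phi$ has a single Jordan block per eigenvalue, and restricting to $\mathcal E_j$ simply retains the block for $\lambda_j$, which is again nonderogatory. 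This is the main obstacle in the sense that it is the only place where nonderogatoriness must be propagated through the decomposition; once it is established, the result is a mechanical consequence of Lemma \ref{lem:splitting} and Definition \ref{def:D0n}. I would also note that isomorphism of the direct sum is interpreted as an external direct sum of Lie algebras, so no compatibility of Frobenius functionals needs to be checked at this stage.
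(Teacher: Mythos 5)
Your proof is correct and follows essentially the same route as the paper's: Primary Decomposition of $V$ into the $\phi$-invariant subspaces $\mathcal E_j=\ker(\phi-\lambda_j)^{k_j}$, then Lemma \ref{lem:splitting} to split $\mathcal G_\phi$ into the ideals $\mathcal G_{\phi_j}$, then Definition \ref{def:D0n}. The only differences are that you make explicit two points the paper leaves implicit -- the induction needed to pass from the two-summand statement of Lemma \ref{lem:splitting} to a $p$-fold decomposition, and the verification that each restriction $\phi_j$ (and the complementary restriction $\phi'$ used in the induction) remains nonderogatory -- which is a welcome tightening rather than a different argument.
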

\begin{proof} Applying Lemma \ref{lem:splitting} to 
Equality (\ref{eq:split}), where $\ker\chi_j(\phi)=\ker(\phi-\lambda_j \mathbb I_{V})^{k_j}$ $ =:\mathcal E_j,$ $j=1,\dots,p,$  leads to $\mathcal G_{\phi} = \mathcal G_{\phi_1}\oplus\dots\oplus\mathcal G_{\phi_p}$, where $\phi_j$ is the restriction of $\phi$ to $\mathcal E_j.$
 Each $\phi_j$   is a nonderogatory element of $\mathfrak{gl}(\mathcal E_j)$, with 
 a unique eigenvalue $\lambda_j$ of multiplicity $k_j$, so $\mathcal G_{\phi_j}=\mathfrak{D}_{\lambda_j}^{k_j},$ by Definition \ref{def:D0n}. Thus
$\mathcal G_\phi= \mathfrak{D}_{\lambda_1}^{k_1}\oplus \dots \oplus \mathfrak{D}_{\lambda_p}^{k_p}$.
\end{proof}
\begin{lemma}\label{lem:uniquerealeigenvalue}
If a nonderogatory $\phi\in\mathfrak{gl}(V)$ has a unique real eigenvalue $\lambda$, then the Lie algebra $\mathcal G_\phi =:\mathfrak{D}_{\lambda}^{n} $ is isomorphic to ${\mathfrak D}_{0}^{n}$ as in Example \ref{ex:D0n}.
\end{lemma}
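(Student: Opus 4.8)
The plan is to strip off the eigenvalue by passing to the nilpotent part of $\phi$, and to observe that this does not change the generated polynomial algebra. First I would set $\psi := \phi - \lambda\mathbb I_{V}$ and record the key identity $\mathbb R[\phi] = \mathbb R[\psi]$: each of $\phi,\psi$ is obtained from the other by adding a scalar multiple of $\mathbb I_{V}$, which lies in both polynomial algebras. Consequently $\mathcal G_\phi = \mathbb R[\phi]\ltimes V = \mathbb R[\psi]\ltimes V$ identically, so it suffices to analyze the nilpotent operator $\psi$.

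Next I would identify $\psi$ with the principal nilpotent $M_0$. Since $\phi$ is nonderogatory with a unique real eigenvalue $\lambda$, its characteristic and minimal polynomials coincide and equal $(X-\lambda)^n$ (Lemma \ref{lemma:factorization} with $p=1$, $q=0$). Hence $\chi_\psi(X)=\chi_{\min,\psi}(X)=X^n$, so $\psi^{n-1}\neq 0$ and $\psi^{n}=0$, i.e. $\psi$ is a principal nilpotent. By Lemma \ref{nonderogatorymatrices}(2) there is a cyclic vector $\bar x\in V$ making $(\bar x,\psi\bar x,\dots,\psi^{n-1}\bar x)$ a basis of $V$. Reading $\psi$ in the reversed basis $(\psi^{n-1}\bar x,\dots,\psi\bar x,\bar x)$, one checks $\psi(\psi^{j}\bar x)=\psi^{j+1}\bar x$ and $\psi(\psi^{n-1}\bar x)=0$, so the matrix of $\psi$ in this basis is exactly the matrix $M_0$ of (\ref{principalnilpotent}). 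Denoting by $P\in GL(V)$ the corresponding change of basis, I obtain $\psi = PM_0P^{-1}$.

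Then I would conclude via conjugacy of the polynomial algebras. From $\psi=PM_0P^{-1}$ and the first step, $\mathbb R[\phi]=\mathbb R[\psi]=P\,\mathbb R[M_0]\,P^{-1}$, so $\mathbb R[\phi]$ and $\mathbb R[M_0]$ are conjugate $n$-dimensional MASAs of $\mathfrak{gl}(V)$. By Proposition \ref{lemma:conjugaison-MASAs}, the semidirect sums attached to conjugate MASAs of equal dimension are isomorphic, whence $\mathcal G_\phi=\mathbb R[\phi]\ltimes V\cong \mathbb R[M_0]\ltimes V=\mathfrak D_0^n$. Equivalently one may invoke Remark \ref{rmq:nonderogatory} applied to $\psi=PM_0P^{-1}$, with the explicit isomorphism $\xi(a)=PaP^{-1}$, $\xi(x)=Px$.

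The only conceptual step — and the one worth isolating — is the first: the identity $\mathbb R[\phi]=\mathbb R[\phi-\lambda\mathbb I_{V}]$ is precisely what makes the eigenvalue $\lambda$ drop out of the isomorphism class, yielding $\mathfrak D_\lambda^n\cong\mathfrak D_0^n$ for every $\lambda$. Everything after that is the standard fact that a principal nilpotent is a single Jordan block together with transport of structure for conjugate MASAs; I expect no genuine obstacle once the reduction to $\psi$ is in place.
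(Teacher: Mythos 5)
Your proof is correct, and it reaches the conclusion by a genuinely lighter route than the paper. The paper's own proof invokes Theorem \ref{thm:Jordanization} (the appendix result) to produce a basis in which $[\phi]=M_\lambda=\lambda\mathbb I_{V}+M_0$, and then observes that $M_\lambda$ and $M_0$, both nonderogatory with $M_\lambda$ a polynomial in $M_0$, have the same centralizer, so $\mathbb R[M_\lambda]=\mathbb R[M_0]$ and hence $\mathcal G_{M_\lambda}=\mathcal G_{M_0}$ literally. You apply the same key observation --- that adding a scalar multiple of $\mathbb I_V$ does not change the generated polynomial algebra --- but \emph{before} Jordanizing rather than after: the identity $\mathbb R[\phi]=\mathbb R[\phi-\lambda\mathbb I_V]$ gives $\mathcal G_\phi=\mathcal G_\psi$ on the nose, and then the Jordanization of the nilpotent nonderogatory $\psi$ is immediate, since the reversed cyclic basis $(\psi^{n-1}\bar x,\dots,\bar x)$ from Lemma \ref{nonderogatorymatrices}(2) already carries $\psi$ to $M_0$ with no further conjugation. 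This bypasses Theorem \ref{thm:Jordanization} entirely (whose proof of case (A) requires constructing an explicit conjugating matrix with binomial-coefficient entries), so your argument is self-contained modulo Lemma \ref{nonderogatorymatrices} and Remark \ref{rmq:nonderogatory}; in fact it quietly re-derives case (A) of that theorem as a byproduct. What the paper's ordering buys is only the reuse of a result it proves anyway and an equality (rather than an isomorphism via $\xi(a)=PaP^{-1}$, $\xi(x)=Px$) at the final step; both conclusions are equally valid.
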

\begin{proof}
Following Theorem \ref{thm:Jordanization}, consider a basis in which the matrix of $\phi$ has the form
$M_\lambda=\lambda\mathbb I_{V}+M_0,$ with $M_0$ as in (\ref{principalnilpotent}). 
 Both $M_0$ and $M_\lambda$  being nonderogatory and  $M_\lambda$ being a polynomial in  $M_0,$ imply the following:  
cent$(M_0,\mathfrak{gl}(V))=$ cent$(M_\lambda,\mathfrak{gl}(V))=$ $\mathbb K[M_0]$.  Thus we have  ${\mathfrak D}_{0}^{n} = \mathcal G_{M_0} = \mathcal G_{M_\lambda} =\mathfrak{D}_{\lambda}^{n}  .$
\end{proof}
Lemmas \ref{lem:multiplerealeigenvalues} and \ref{lem:uniquerealeigenvalue} prove the following.
\begin{proposition}\label{prop:classification-multiple-eigenvalues}
Suppose a nonderogatory $\phi\in\mathfrak{gl}(V)$ has $p$ distinct eigenvalues $\lambda_1, \dots,\lambda_p,$  all of which are real and of respective multiplicity $k_1,\dots, k_p$, where
$k_1+\cdots+ k_p=n.$ Then $\mathcal G_\phi$ is isomorphic to the direct sum $ \mathfrak{D}_{0}^{k_1}\oplus \cdots \oplus \mathfrak{D}_{0}^{k_p}$.
\end{proposition}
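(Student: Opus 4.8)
The plan is to obtain this statement as a short assembly of the two preceding lemmas, since all the genuine content already lives in them: Lemma \ref{lem:multiplerealeigenvalues} supplies the direct-sum decomposition indexed by the distinct eigenvalues, and Lemma \ref{lem:uniquerealeigenvalue} normalizes each summand by erasing its eigenvalue. So the proposition is essentially the composite of these two reductions, and the proof should amount to invoking them in order.

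First I would invoke Lemma \ref{lem:multiplerealeigenvalues}, whose hypothesis is precisely the one in force here: $\phi$ is nonderogatory with $p$ distinct real eigenvalues $\lambda_1,\dots,\lambda_p$ of multiplicities $k_1,\dots,k_p$ summing to $n$. That lemma already yields $\mathcal G_\phi \cong \mathfrak D_{\lambda_1}^{k_1}\oplus\cdots\oplus\mathfrak D_{\lambda_p}^{k_p}$, the splitting arising from the primary decomposition $V=\bigoplus_{j=1}^p \ker(\phi-\lambda_j\mathbb I_V)^{k_j}$ together with the decomposition of $\mathcal G_\phi$ into ideals guaranteed by Lemma \ref{lem:splitting}. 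Second, I would apply Lemma \ref{lem:uniquerealeigenvalue} to each summand separately: each $\mathfrak D_{\lambda_j}^{k_j}=\mathcal G_{\phi_j}$ is built from a nonderogatory operator with the single real eigenvalue $\lambda_j$ of multiplicity $k_j$, so $\mathfrak D_{\lambda_j}^{k_j}\cong \mathfrak D_0^{k_j}$. Substituting these isomorphisms summand by summand produces $\mathcal G_\phi \cong \mathfrak D_0^{k_1}\oplus\cdots\oplus\mathfrak D_0^{k_p}$, which is exactly the claim.

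There is essentially no obstacle left at this stage, precisely because the two delicate points were discharged earlier. The conceptual heart, proved in Lemma \ref{lem:uniquerealeigenvalue}, is that $\mathfrak D_\lambda^n$ is independent of $\lambda$: writing the matrix of $\phi$ as $M_\lambda=\lambda\mathbb I_V+M_0$ with $M_0$ the principal nilpotent of (\ref{principalnilpotent}), one has that $M_\lambda$ is a polynomial in $M_0$, whence cent$(M_\lambda,\mathfrak{gl}(V))=$ cent$(M_0,\mathfrak{gl}(V))=\mathbb R[M_0]$, so $\mathcal G_{M_\lambda}$ and $\mathcal G_{M_0}$ are literally the same Lie algebra; this in turn rests on the availability of a Jordan form for real nonderogatory maps (Theorem \ref{thm:Jordanization}). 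The only residual thing to check for the proposition itself is the formal compatibility of the summand-wise isomorphisms with the ambient direct sum, which is immediate and moreover consistent with the uniqueness of decomposition recorded in Remark \ref{rmq:indecomposable}.
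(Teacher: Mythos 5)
Your proposal is correct and follows exactly the paper's own route: the paper likewise derives the proposition by first invoking Lemma \ref{lem:multiplerealeigenvalues} to obtain $\mathcal G_\phi\cong \mathfrak{D}_{\lambda_1}^{k_1}\oplus\cdots\oplus\mathfrak{D}_{\lambda_p}^{k_p}$ and then applying Lemma \ref{lem:uniquerealeigenvalue} summand by summand to replace each $\mathfrak{D}_{\lambda_j}^{k_j}$ with $\mathfrak{D}_{0}^{k_j}$. Your additional remarks on the centralizer argument and the role of Theorem \ref{thm:Jordanization} merely restate the content already discharged inside those lemmas, so nothing is missing.
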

\begin{proof}Indeed, if a  nonderogatory $\phi\in\mathfrak{gl}(V)$ has $p$ real eigenvalues $\lambda_1, \dots,\lambda_p,$ 
 of respective multiplicity $k_1,\dots, k_p$, where
$k_1+\cdots+ k_p=n,$ then Lemma \ref{lem:multiplerealeigenvalues} ensures that $\mathcal G_\phi$ is isomorphic to the direct sum $ \mathfrak{D}_{\lambda_1}^{k_1}\oplus \cdots \oplus \mathfrak{D}_{\lambda_p}^{k_p}$ and Lemma  \ref{lem:uniquerealeigenvalue} further proves that each $\mathfrak{D}_{\lambda_j}^{k_j}$ is isomorphic to $\mathfrak{D}_{0}^{k_j}$, for  $j=1,\dots,p.$ 
\end{proof}
This concludes the proof of Theorem \ref{thm:classification} in the case  of nonderogatory $\phi\in\mathfrak{gl}(V)$  with only real eigenvalues. Proposition \ref{ex:onlyrealeigenvalues} supplies another rather direct proof (valid only when all the eigenvalues are nonzero) of Theorem \ref{thm:classification} (a).

\begin{proposition}\label{ex:onlyrealeigenvalues}  
Let $\mathcal G $ be  the direct sum  of $n$ copies of the Lie algebra $
\mathfrak{aff}(\mathbb R) $ and $\lambda_1,\dots,\lambda_n,$  $n$ distinct nonzero real numbers. Suppose $\phi\in\mathfrak{gl}(\mathbb R^n)$ is nonderogatory with eigenvalues $\lambda_1,\dots,\lambda_n.$ Then $\mathcal G$ and $\mathcal G_\phi$ are isomorphic.
\end{proposition}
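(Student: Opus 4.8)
The plan is to reduce everything to the diagonal model of $\phi$ and then read off the $n$ copies of $\mathfrak{aff}(\mathbb R)$ directly from the matrix units. First I would observe that a nonderogatory $\phi\in\mathfrak{gl}(\mathbb R^n)$ with $n$ \emph{distinct} real eigenvalues $\lambda_1,\dots,\lambda_n$ is diagonalizable over $\mathbb R$: distinct eigenvalues yield $n$ linearly independent eigenlines, so $\phi$ admits a real eigenbasis $(v_1,\dots,v_n)$ with $\phi v_i=\lambda_i v_i$. By the conjugation-invariance of $\mathcal G_\phi$ recorded in Remark \ref{rmq:nonderogatory}, I may replace $\phi$ by its diagonal matrix $\mathrm{diag}(\lambda_1,\dots,\lambda_n)$ in the canonical basis $(\tilde e_1,\dots,\tilde e_n)$ without changing the isomorphism type of $\mathcal G_\phi$, so that from now on $\phi\tilde e_i=\lambda_i\tilde e_i$.

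The second step identifies $\mathbb R[\phi]$. Since $\phi$ is nonderogatory, $\dim\mathbb R[\phi]=n$ by Lemma \ref{nonderogatorymatrices}; since every power $\phi^k=\mathrm{diag}(\lambda_1^k,\dots,\lambda_n^k)$ is diagonal, $\mathbb R[\phi]$ is contained in the $n$-dimensional algebra of diagonal matrices, whence the two coincide. In particular each idempotent $E_{i,i}$ lies in $\mathbb R[\phi]$; concretely it is the Lagrange polynomial $\prod_{j\neq i}(\phi-\lambda_j\mathbb I_{\mathbb R^n})/(\lambda_i-\lambda_j)$, which is exactly where the distinctness of the $\lambda_i$ enters. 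The family $(E_{i,i})_{i=1}^n$ is then a basis of $\mathbb R[\phi]$ adapted to the action on $\mathbb R^n$.

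Third, I would simply compute the brackets of $\mathcal G_\phi=\mathbb R[\phi]\ltimes\mathbb R^n$ in the basis $(E_{1,1},\dots,E_{n,n},\tilde e_1,\dots,\tilde e_n)$. Because $E_{i,i}\tilde e_j=\delta_{ij}\tilde e_j$, the only nonzero brackets are $[E_{i,i},\tilde e_i]=\tilde e_i$, while $[E_{i,i},E_{j,j}]=0$ and $[\tilde e_i,\tilde e_j]=0$. Hence $\mathcal G_\phi=\bigoplus_{i=1}^n \mathrm{span}(E_{i,i},\tilde e_i)$, each summand being an ideal isomorphic to $\mathfrak{aff}(\mathbb R)=\mathfrak D_0^1$ via $E_{i,i}\mapsto e_1$, $\tilde e_i\mapsto e_2$ (Example \ref{ex:D0n}). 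Matching these $n$ summands with the $n$ given copies of $\mathfrak{aff}(\mathbb R)$ making up $\mathcal G$ produces the desired isomorphism $\mathcal G\cong\mathcal G_\phi$.

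I do not expect a genuine obstacle in the calculation; the one point to state carefully is the identification $\mathbb R[\phi]=\{\text{diagonal matrices}\}$, i.e. that the matrix units $E_{i,i}$ really are polynomials in $\phi$, since this is what makes $\mathrm{span}(E_{i,i},\tilde e_i)$ an honest subalgebra of $\mathcal G_\phi$ and forces the semidirect product to split as a direct sum of copies of $\mathfrak{aff}(\mathbb R)$. The hypothesis that all eigenvalues are nonzero is the scope under which this direct route (through the invertible operator $\phi$ itself) is phrased, the unrestricted statement being already covered by the inductive splitting of Proposition \ref{prop:classification-multiple-eigenvalues}.
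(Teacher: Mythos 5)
Your proof is correct, but it runs in the opposite direction from the paper's and rests on a different mechanism. You decompose $\mathcal G_\phi$: diagonalize $\phi$ (legitimate, since $n$ distinct real eigenvalues give a real eigenbasis, and Remark \ref{rmq:nonderogatory} gives conjugation invariance), identify $\mathbb R[\phi]$ with the full diagonal algebra via the Lagrange idempotents $E_{i,i}=\prod_{j\neq i}(\phi-\lambda_j\mathbb I_{\mathbb R^n})/(\lambda_i-\lambda_j)$, and read off $\mathcal G_\phi=\bigoplus_i \mathrm{span}(E_{i,i},\tilde e_i)$ as a sum of ideals isomorphic to $\mathfrak{aff}(\mathbb R)$. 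The paper instead starts from the abstract $\mathcal G$, writes it as $\mathfrak B\ltimes V$ with $\mathfrak B=\mathrm{span}(a_1,\dots,a_n)$ acting diagonally on $V=\mathrm{span}(b_1,\dots,b_n)$ (so the operators $\rho(a_i)$ are exactly your $E_{i,i}$, seen from the other side), chooses $a_0=\sum_i\lambda_i a_i$, and builds an explicit isomorphism $\mathcal G\to\mathbb R[\rho(a_0)]\ltimes V$ by expressing each $\rho(a)$ as a polynomial in $\rho(a_0)$ \emph{with zero constant term}; solvability of the resulting linear system hinges on $\det(N)=\bigl(\prod_i\lambda_i\bigr)\prod_{i<j}(\lambda_j-\lambda_i)\neq 0$, which is precisely where the hypothesis $\lambda_i\neq 0$ enters. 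Your route never needs that hypothesis, since Lagrange interpolation permits constant terms; so your argument actually proves the statement with ``nonzero'' deleted (consistent with the general case already covered by Proposition \ref{prop:classification-multiple-eigenvalues}), whereas the paper's proof buys an explicit isomorphism formula in the original basis of $\mathcal G$, without conjugating $\phi$, at the cost of the nonvanishing assumption.
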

\begin{proof}
 Choose a basis
$(a_i,b_i)$ of the ith copy of $
\mathfrak{aff}(\mathbb R) $, so that $[a_i,b_i]=b_i$  and set
$\mathfrak{B}=\hbox{\rm span}(a_1,\dots,a_n)$. Let $V:=[\mathcal G,\mathcal G]=\hbox{\rm span}(b_1, \dots,b_n) $.
 For $a\in\mathfrak{B}$, denote by $\rho(a)$ the restriction to $V$ of the adjoint  operator $[a,\cdot ]$ of $a.$ 
So one has  
 $\mathcal G=\mathfrak{B}\ltimes_\rho V.$
Further choose $a_0 =
\sum\limits_{i=1}^n\lambda_ia_i$, 
with $\lambda_i$ as in the hypotheses, so that the $\lambda_i$'s are the eigenvalues 
of   $\rho(a_0).$ By hypothesis, the characteristic
and minimal polynomials of $\rho(a_0)$ coincide. Hence $\mathbb R[\rho(a_0)]$ and $\mathbb R[\phi]$ are conjugate (with the identification $V=\mathbb R^n$).
Consider the linear map
 $\psi : \mathcal G \to\mathbb R[\rho(a_0)]\ltimes V
=\mathcal G_{\phi}
$
defined by $\psi (b)=b$, for every $b\in V$ and
 $\psi(a)=\sum\limits_{s=1}^nv_s
(\rho(a_0))^{s}$ for any $a=\sum\limits_{i=1}^nk_ia_i$, where $v=(v_1,\dots,v_n)$ is the solution of the equation
 $ Nv^T =
K^T
$, with $K=(k_1,\dots,k_n)$  
and $N$ is the $n\times n$ matrix with coefficients $N_{ij}=\lambda_i^j$.
One sees that, for any $b=\sum\limits_{j}^nt_jb_j,$ 
 $t_j\in\mathbb R,$ $\psi[a,b]=[a,b]=\sum\limits_{j=1}^nt_jk_jb_j = 
\sum\limits_{j=1}^nt_j  \sum\limits_{s=1}^nN_{js}v_sb_j = \sum\limits_{j=1}^nt_j  \sum\limits_{s=1}^n\lambda_j^sv_sb_j 
$ and $[\psi(a),\psi(b)] = \sum\limits_{j=1}^n t_j \sum\limits_{s=1}^nv_s
(\rho(a_0))^{s}b_j =   \sum\limits_{j=1}^n t_j \sum\limits_{s=1}^nv_s
\lambda_j^{s}b_j  =\psi[a,b]$
 and 
$[\psi(a),\psi(a')]=\psi[a,a']= [\psi(b),\psi(b')]=\psi[b,b']=0,$ for any $a,a'\in\mathfrak{B}$, $b,b'\in V$. We further have
$$\det(N)=\left(\prod\limits_{i=1}^n\lambda_i\right)\left(\prod\limits_{1\le i<j\le
n}(\lambda_j-\lambda_i)\right)\neq 0.$$
Thus $\psi$ is a Lie algebra isomorphism.
\end{proof}

\begin{example}
Let $\mathcal G$ be the  $6$-dimensional rank-three K\"ahler-Einstein solvable Lie algebra  in \cite{g2-structures-fino}, with  Lie bracket  
$[e_4,e_1]=se_1$, $[e_4,e_2]=se_2$, $[e_4,e_3]=se_3$, 
$[e_5,e_1]=-s\frac{\sqrt{6}}{2}  e_1$, $[e_5,e_2]=s\frac{\sqrt{6}}{2}  e_2$,
 $[e_6,e_1]=\frac{\sqrt{2}}{2}  e_1$, $[e_6,e_2]=s\frac{\sqrt{2}}{2}  e_2$, 
$[e_6,e_3]=-s\sqrt{2}  e_3,$  in a basis $(e_1,\dots,e_6),$ with $s\in\mathbb R.$ 
Consider  $\mathcal B=\mathfrak{aff}(\mathbb R)\oplus \mathfrak{aff}(\mathbb R)\oplus\mathfrak{aff}(\mathbb R)$ and a basis $(a_1,b_1,a_2,b_2,a_3,b_3)$ 
in which its Lie bracket reads
$[ a_j,  b_j]= b_j,$ $j=1,2,3$.
 The linear map $\psi: \mathcal G\to \mathcal B$,   $\psi(e_1)= b_1$, $\psi(e_2)= b_2$,  $\psi(e_3)= b_3$, 
$\psi(e_4)= s(a_1+ a_2+  a_3)$, $\psi(e_5)=s\frac{\sqrt{6}}{2}(a_2-a_1)$, 
$\psi(e_6)=\frac{\sqrt{2}}{2} (a_1+ s a_2 -2s a_3)$, is a homomorphism and $\det(\psi)=-\frac{\sqrt{3}}{2} s^2(5s+1)$. So, $\psi$ is an isomorphism between the Lie algebras $\mathcal G$ and $\mathcal B$,  except when $s\in\{0,-\frac{1}5\}$. 
Thus, any $\phi \in\mathfrak{gl}(3,\mathbb R)$ with $3$ distinct  real eigenvalues  gives rise to $\mathcal G_\phi=\mathcal G,$ 
when $s\notin \{0,-\frac{1}5\}$. Note that $\G$ is Frobenius if and only if $s\notin\{ 0,-\frac{1}5\}$.
\end{example}

\subsection{Nonderogatory $\phi\in\mathfrak{gl}(V)$ all of whose eigenvalues are complex}\label{Sect:complexeigenvalues}
\subsubsection{Nonderogatory $\phi\in\mathfrak{gl}(V)$ diagonalizable in $\mathbb C$}

Let $\phi\in\mathfrak{gl}(V)$ be nonderogatory. Suppose all the eigenvalues $\lambda_1,\bar \lambda_1,\dots,\lambda_{\frac{n}{2}}, \bar \lambda_{\frac{n}{2}}$ of $\phi$ are complex (nonreal) and distinct,  in which case  $n$ is even and $\phi$ is diagonalizable in $\mathbb C$. 
From Theorem \ref{thm:Jordanization} (B),  there is a basis of $V$ in which the matrix $\phi$ is of the form diag($J_1,\dots,J_{\frac{n}{2}}$), where each block $J_j$ is of the form   $\begin{pmatrix}Re(\lambda_j)&-Im(\lambda_j)\\ Im(\lambda_j)&Re(\lambda_j)\end{pmatrix},$ $Im(\lambda_j)\neq 0.$
Note that  each  $J_j$ is nonderogatory with characteristic polynomial $\chi_{J_j}(X)=(X-Re(\lambda_j))^2+Im(\lambda_j)^2.$ 
 Let $v_j$ be an  eigenvector of $\phi$ with corresponding eigenvalue $\lambda_j$.
Both $Re(v_j)$ and $Im(v_j)$ are in $\ker\chi_{J_j}(\phi) =:\mathcal E_{j}.$  Obviously, one has $\phi\mathcal E_{j}\subset  \mathcal E_j,$ $j=1,\dots,\frac{n}{2}.$
Set $\phi_j:=\phi_{\vert_{\mathcal E_j}}.$
Of course, $\mathbb R  [\phi_j]= \mathbb R \mathbb I_{\mathcal E_j}\oplus \mathbb R  \phi_j$.  In the basis 
$e_1=\mathbb I_{\mathcal E_j},$ $e_2=\phi_j,$ $e_3=Re(v_j),$ $e_4=Im(v_j)$, 
the Lie bracket of  $\mathcal  G_{{ \phi}_{j}}:=\mathbb R  [\phi_j]\ltimes \mathcal E_j$, is
$[e_1,e_3] = e_3,$ $[e_1,e_4] = e_4,$ $[e_2,e_3] = Re(\lambda_j ) e_3 +  Im(\lambda_j) e_4$, 
 $[e_2,e_4] = -  Im(\lambda_j) e_3 +Re(\lambda_j) e_4.$ 
 Lemma \ref{lem:splitting} applied to the direct sum  $V=\mathcal E_{1}\oplus\dots\oplus \mathcal E_{{\frac{n}{2}}}$, leads to $\mathcal G_\phi = \mathcal  G_{{\phi}_{1}}\oplus\cdots\oplus \mathcal  G_{{\phi}_{\frac{n}{2}}}.$ 
We identify $\mathcal E_{j}$ with $\mathbb R^2$ and show below that each  Lie ideal
 $\mathcal G_{{\phi}_{j}}$ is isomorphic to $\mathfrak{aff}(\mathbb C).$  
 \begin{lemma}\label{lem:isomorphism4D} Let $\phi\in\mathfrak{gl}(2,\mathbb R)$ with complex eigenvalues $\lambda, \;\bar\lambda,$ where $\lambda =r-is$  and $r, \;s\in\mathbb R,$ with $s\neq 0$. Then $\mathcal G_{\phi}$ is isomorphic to $\mathfrak{aff}(\mathbb C),$ as in Example \ref{ex:D01n}.\end{lemma}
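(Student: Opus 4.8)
The plan is to exploit the fact that $\mathcal G_\phi$ depends only on the commutative subalgebra $\mathbb R[\phi]\subset\mathfrak{gl}(2,\mathbb R)$ together with its action on $\mathbb R^2$, and not on the particular generator $\phi$; this lets me normalize $\phi$ to the operator $\psi$ of Example \ref{ex:D01n}(A) and then conclude by conjugation. Since $\phi$ has the two distinct complex eigenvalues $\lambda=r-is$ and $\bar\lambda=r+is$ with $s\neq0$, it is nonderogatory with $\chi_{\min,\phi}=\chi_\phi=(X-r)^2+s^2$; hence $\mathbb R[\phi]=\mathbb R\mathbb I_{\mathbb R^2}\oplus\mathbb R\phi$ is two-dimensional and $\mathcal G_\phi=\mathbb R[\phi]\ltimes\mathbb R^2$ is four-dimensional.

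First I would set $\eta:=\frac1s(\phi-r\mathbb I_{\mathbb R^2})\in\mathfrak{gl}(2,\mathbb R)$. Because $s\neq0$, this is an invertible affine reparametrization of $\phi$, so $\mathbb R[\eta]=\mathbb R[\phi]$ as subalgebras of $\mathfrak{gl}(2,\mathbb R)$; since the two algebras coincide and act identically on $\mathbb R^2$, the semidirect sums are literally equal, $\mathcal G_\eta=\mathbb R[\eta]\ltimes\mathbb R^2=\mathbb R[\phi]\ltimes\mathbb R^2=\mathcal G_\phi$. A direct computation of eigenvalues shows that $\eta$ has eigenvalues $\pm i$, i.e.\ $\chi_\eta(X)=X^2+1$, which is exactly the characteristic polynomial of the operator $\psi$ of Example \ref{ex:D01n}(A).

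Next I would use that any two real $2\times2$ matrices sharing the characteristic polynomial $X^2+1$ are conjugate in $GL(2,\mathbb R)$: the polynomial $X^2+1$ is irreducible over $\mathbb R$, so such a matrix turns $\mathbb R^2$ into a cyclic module over $\mathbb R[X]/(X^2+1)\cong\mathbb C$, and the resulting real (rational) canonical form is unique. Thus $\eta=P\psi P^{-1}$ for some $P\in GL(2,\mathbb R)$. By Remark \ref{rmq:nonderogatory}, conjugate nonderogatory operators produce isomorphic Lie algebras, so $\mathcal G_\eta\cong\mathcal G_\psi=\mathfrak{aff}(\mathbb C)$. Combining with the previous paragraph yields $\mathcal G_\phi=\mathcal G_\eta\cong\mathfrak{aff}(\mathbb C)$, as claimed.

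The step I expect to be the crux is the first one: the eigenvalues $r\pm is$ of $\phi$ differ from the eigenvalues $\pm i$ of $\psi$, so $\phi$ and $\psi$ are \emph{not} conjugate and Remark \ref{rmq:nonderogatory} cannot be applied to them directly. Recognizing that replacing $\phi$ by the affinely rescaled $\eta$ leaves $\mathbb R[\phi]$, and hence $\mathcal G_\phi$, unchanged is precisely what unlocks the argument; everything after this normalization is standard linear algebra. As a cross-check one could instead exhibit an explicit linear isomorphism $\mathcal G_\phi\to\mathfrak{aff}(\mathbb C)$ in the bases $(\mathbb I_{\mathbb R^2},\phi,Re(v),Im(v))$ and $(e_1,\dots,e_4)$ of Example \ref{ex:D01n}(A), matching the brackets recorded just before the lemma, but the reparametrization argument avoids all such computation.
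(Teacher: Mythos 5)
Your proof is correct, but it runs along a genuinely different route than the paper's. The paper proves Lemma \ref{lem:isomorphism4D} by brute force: it puts $\phi$ in the real Jordan form $\begin{pmatrix}r&-s\\ s&r\end{pmatrix}$ (justified by Theorem \ref{thm:Jordanization}(B)), writes out the bracket of $\mathcal G_\phi$ in the basis $(e_1'=\mathbb I_{\mathbb R^2},\,e_2'=\phi,\,e_3'=\tilde e_1,\,e_4'=\tilde e_2)$, and then exhibits the explicit change of basis $X_1=e_1'$, $X_2=-\tfrac{r}{s}e_1'+\tfrac1s e_2'$, $X_3=pe_3'-qe_4'$, $X_4=qe_3'+pe_4'$ ($p^2+q^2\neq0$), verifying by hand that the new structure constants are those of $\mathfrak{aff}(\mathbb C)$. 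Your normalization $\eta=\tfrac1s(\phi-r\mathbb I_{\mathbb R^2})$ is exactly the paper's $X_2$, but you package it abstractly: since $\mathbb R[\eta]=\mathbb R[\phi]$, the Lie algebra $\mathcal G_\phi$ is literally unchanged, and then conjugacy of $\eta$ to $\psi$ (uniqueness of the real canonical form for the irreducible polynomial $X^2+1$) plus Remark \ref{rmq:nonderogatory} finishes the argument with no bracket computation at all. What your version buys is conceptual economy and uniformity: it is in fact the same strategy the paper itself deploys later for the non-diagonalizable case (Section \ref{sect:nondiagonalizableinC}, Proposition \ref{prop:isomorphismcomplexeigenvaluesnonsemiimple}), where $M_z=r\mathbb I_{\mathbb R^n}+sM_s+M_n$ is shown to be a polynomial in $M_{0,1}$, so that $\mathbb R[M_z]=\mathbb R[M_{0,1}]$ and $\mathcal G_{M_z}=\mathcal G_{M_{0,1}}$; your proof shows the diagonalizable $2\times2$ case admits the identical treatment. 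What the paper's computation buys is self-containedness (no appeal to canonical-form uniqueness beyond the existence statement of Theorem \ref{thm:Jordanization}) and an explicit isomorphism, including the $(p,q)$-parameter family of choices, which is of independent use. One cosmetic caveat: your side remark that ``$\phi$ and $\psi$ are not conjugate'' is not true in the degenerate case $r=0$, $s=\pm1$ (there $\eta=\pm\phi$ and they are conjugate); this affects only the motivation, not the proof, since the normalization argument is valid in all cases.
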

\begin{proof}
Without loss of generality, set $\phi(\tilde e_1)=r\tilde e_1+s \tilde e_2$, $\phi(\tilde e_2)=-s\tilde e_1+r \tilde e_2$, in the canonical basis $(\tilde e_1, \tilde e_2)$ of $\mathbb R^2 .$ 
 In the basis $e_1' =\mathbb I_{\mathbb R^2},\; e_2'=\phi$, $e_3'=\tilde e_1,\; e_4'=\tilde e_2$, the Lie bracket of $\mathcal G_{\phi}$ reads
 $[e_1',e_3'] = e_3'$, $[e_1',e_4'] = e_4'$, $[e_2',e_3'] = re_3'+s e_4'$,  $[e_2',e_4'] =- se_3'+r e_4'$. In the new  basis $X_1:=e_1'$, $X_2:=-\frac{r}{s}e_1' + \frac{1}{s}e_2'$, $X_3:=pe_3' -qe_4'$, $X_4:=qe_3' + pe_4'$, with $p^2+q^2\neq 0$, we now have
$[X_1,X_3]=X_3$, $[X_1,X_4]=X_4$, $[X_2,X_3]=X_4$, $[X_2,X_4]=-X_3$, which is the Lie bracket of  $\mathfrak{aff}(\mathbb C).$ In other words, the invertible linear map $\psi : \mathfrak{aff}(\mathbb C) \to  \mathcal G_{\phi}$, $\psi(e_j)=X_j$, $j=1,2,3,4,$ is an isomorphism between the Lie algebras $\mathfrak{aff}(\mathbb C)$ and $  \mathcal G_{\phi}$.
  \end{proof}

\begin{proposition}\label{prop:classification-multiple-complexeigenvalues}
Suppose a nonderogatory $\phi\in\mathfrak{gl}(V)$ has $n$  distinct complex eigenvalues $\lambda_j,\bar\lambda_j,$  $j=1,\dots, \frac{n}{2}.$ Then the Lie algebra $\mathcal G_\phi:=\mathbb R[\phi]\ltimes V$ is isomorphic to the direct sum $\mathfrak{aff}(\mathbb C)\oplus \cdots \oplus \mathfrak{aff}(\mathbb C)$ of  $\frac{n}{2}$ copies of the Lie algebra $\mathfrak{aff}(\mathbb C)$.
\end{proposition}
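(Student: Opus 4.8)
The plan is to assemble the ingredients already prepared in the preceding discussion into a single inductive decomposition, since all the substantive work is carried by the auxiliary results. First I would invoke the Jordanization Theorem (Theorem \ref{thm:Jordanization}) to fix a basis in which $\phi$ takes the block-diagonal real form $\mathrm{diag}(J_1,\dots,J_{\frac n2})$, each $J_j$ being the $2\times 2$ block whose characteristic polynomial is the irreducible real quadratic $\chi_{J_j}(X)=(X-Re(\lambda_j))^2+Im(\lambda_j)^2$. Setting $\mathcal E_j:=\ker\chi_{J_j}(\phi)$, each $\mathcal E_j$ is a two-dimensional $\phi$-invariant subspace. Since the eigenvalues are pairwise distinct, the quadratic factors are pairwise coprime, so the Primary Decomposition Theorem together with Cayley--Hamilton yields the $\phi$-invariant splitting $V=\mathcal E_1\oplus\cdots\oplus\mathcal E_{\frac n2}$.

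Next I would peel off the summands one at a time using Lemma \ref{lem:splitting}. Because that lemma is phrased for a decomposition into two $\phi$-invariant pieces, I would argue by induction on the number of blocks: at the $k$-th step I group $\mathcal E_1\oplus\cdots\oplus\mathcal E_k$ against $\mathcal E_{k+1}\oplus\cdots\oplus\mathcal E_{\frac n2}$, both $\phi$-invariant, and extract $\mathcal G_{\phi_1}\oplus\cdots\oplus\mathcal G_{\phi_k}$ as a direct summand. For the induction to proceed I must check that the hypothesis of Lemma \ref{lem:splitting}---namely that the relevant restriction of $\phi$ is nonderogatory---survives at each stage; this is exactly the point that deserves a line of care. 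It holds because, on any partial sum of the $\mathcal E_j$, the restricted minimal polynomial is the product of the corresponding distinct irreducible quadratics, which coincides with the restricted characteristic polynomial. Iterating produces $\mathcal G_\phi=\mathcal G_{\phi_1}\oplus\cdots\oplus\mathcal G_{\phi_{\frac n2}}$, where each $\phi_j$ denotes the restriction of $\phi$ to $\mathcal E_j$.

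Finally, identifying each $\mathcal E_j$ with $\mathbb R^2$, I observe that $\phi_j\in\mathfrak{gl}(2,\mathbb R)$ carries the pair of complex conjugate eigenvalues $\lambda_j,\bar\lambda_j$ with nonzero imaginary part, so Lemma \ref{lem:isomorphism4D} applies verbatim and gives $\mathcal G_{\phi_j}\cong\mathfrak{aff}(\mathbb C)$. Substituting these isomorphisms into the direct-sum decomposition from the previous step then yields the desired isomorphism of $\mathcal G_\phi$ with $\frac n2$ copies of $\mathfrak{aff}(\mathbb C)$. Beyond the bookkeeping just described, there is no genuine obstacle: the geometric content sits entirely in the Jordanization Theorem and in Lemmas \ref{lem:splitting} and \ref{lem:isomorphism4D}.
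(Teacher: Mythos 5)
Your proposal is correct and follows essentially the same route as the paper: primary decomposition of $V$ into the $\phi$-invariant planes $\mathcal E_j=\ker\chi_{J_j}(\phi)$, the splitting Lemma \ref{lem:splitting} to obtain $\mathcal G_\phi=\mathcal G_{\phi_1}\oplus\cdots\oplus\mathcal G_{\phi_{\frac{n}{2}}}$, and Lemma \ref{lem:isomorphism4D} to identify each summand with $\mathfrak{aff}(\mathbb C)$. Your explicit induction (checking that the restriction of $\phi$ to each partial sum remains nonderogatory) merely makes precise the paper's direct application of the two-summand Lemma \ref{lem:splitting} to the full decomposition, so the content is the same.
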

\begin{proof}Suppose  a nonderogatory $\phi\in\mathfrak{gl}(V)$ has $n$ distinct complex (nonreal) eigenvalues $\lambda_j,\bar\lambda_j,$ $j=1,\dots, \frac{n}{2}.$  Then $\mathcal G_\phi$ is isomorphic to the direct sum 
  of the ideals $\mathcal  G_{{ \phi}_{j}}:=\mathbb R  [\phi_j]\ltimes \mathcal E_j$, $j=1,\dots,\frac{n}{2}$, where $\ker\chi_{\phi_j}(\phi_j) =:\mathcal E_{j},$ as above.   From Lemma  \ref{lem:isomorphism4D}, each  $\mathcal  G_{{\phi}_{j}}$ is isomorphic to $\mathfrak{aff}(\mathbb C)$. 
\end{proof}

\subsubsection{Example: the circular permutation of the  vectors of a basis}\label{Sect:exple-complexeigenvalues}
Here is a typical example of a nonderogatory $\phi\in\mathfrak{gl}(n,\mathbb R)$ with $n$ real and complex eigenvalues, hence diagonalizable in $\mathbb C$. 
It is 
given by the circular permutation of the canonical basis $\phi(\tilde e_i) = \tilde e_{i+1}$, for $i=1,\dots,n-1$ and $\phi(\tilde e_{n})=\tilde e_1$. Any vector $\tilde e_i$ of this basis is such that   $(\tilde e_i,\phi(\tilde e_i), \dots, \phi^{n-1}(\tilde e_i))$ is again a basis of $\mathbb R^n$. The  map $\phi$ is nonderogatory and its matrix in the above basis reads $[\phi]=E_{1,n}+\sum\limits_{i=1}^{n-1}E_{i+1,i}.$
Its characteristic polynomial is $\chi_\phi(X)=X^n-1$, up to a sign. So the eigenvalues are the complex nth roots of $1.$ They are $z_k=e^{ik\frac{2\pi}{n}}$, where $k=1,2,\dots,n.$
When $n=2,$ then it reads  $[\phi]=E_{1,2}+E_{2,1}$ and has the two distinct real eigenvalues $z_1=-1$ and $z_2=1.$  So $\mathcal G_\phi$ is isomorphic to 
$\mathfrak{aff}(\mathbb R)\oplus \mathfrak{aff}(\mathbb R).$ For $n=3,$ the eigenvalues are $z_1=1,$ $ z_2=-e^{i\frac{\pi}{3}}$, 
$ z_3=-e^{-i\frac{\pi}{3}}.$ So  $\mathcal G_\phi$ is isomorphic to $\mathfrak{aff}(\mathbb R)\oplus\mathfrak{aff}(\mathbb C)$. For $n=4,$ the eigenvalues are $z_1=-1,$ $z_2= 1,$ $z_3= i,$ $z_4= -i$, thus  $\mathcal G_\phi$ is isomorphic to $\mathfrak{aff}(\mathbb R)\oplus \mathfrak{aff}(\mathbb R)\oplus\mathfrak{aff}(\mathbb C).$  When $n=5$, there are one real and $4$ complex eigenvalues and hence we get
 $\mathcal G_\phi=\mathfrak{aff}(\mathbb R)\oplus\mathfrak{aff}(\mathbb C)\oplus\mathfrak{aff}(\mathbb C).$
For $n=7$, we have one real  and six complex eigenvalues, namely  $1, -e^{i\frac{\pi}{7}},  -e^{-i\frac{\pi}{7}}, e^{i\frac{2\pi}{7}}, e^{-i\frac{2\pi}{7}},  -e^{i\frac{3\pi}{7}},- e^{-i\frac{3\pi}{7}}.$
This gives rise to $\mathcal G_\phi=\mathfrak{aff}(\mathbb R)\oplus\mathfrak{aff}(\mathbb C)\oplus\mathfrak{aff}(\mathbb C) \oplus\mathfrak{aff}(\mathbb C).$

\subsubsection{Nonderogatory $\phi\in\mathfrak{gl}(V)$ non-diagonalizable in $\mathbb C$}\label{sect:nondiagonalizableinC}

As above, $V$ is a real vector space with $\dim V=n.$
In this section, we discuss the case of nonderogatory $\phi\in\mathfrak{gl}(V)$ all of whose eigenvalues are complex (nonreal, in which case, $n$ is even), 
but which are not diagonalizable in $\mathbb C.$  
First, consider the case where $\phi$ has only $2$ eigenvalues, say $z=r+is$ and $r-is$, with $s\neq 0$. In the same basis in which the matrix of $\phi$ is in the form $M_z$ as in (\ref{Jordancomplex}), consider the nonderogatory 
 $M_{0,1}=M_s+M_n$, where $M_s$, $M_n$ are as in (\ref{Ms-Mn-M01}).
We have
\begin{eqnarray}M_sM_n&=& -\sum\limits_{j=0}^{\frac{n}{2}-2}\Big(E_{2j+1,2j+4} - E_{2j+2,2j+3}\Big)= M_nM_s\;,
\end{eqnarray}
and, of course $[ M_{0,1},M_s]=[M_n ,M_s] = [ M_n,M_{0,1}]=0$. In particular,  $M_s$ and $M_n$ are respectively the 
semisimple and the nilpotent parts of $ M_{0,1}.$ So, $M_s$ and $M_n$ are both polynomials in $M_{0,1}$.
 Thus,  $M_z=r\mathbb I_{\mathbb R^n}+sM_s+M_n$ is also a polynomial in 
 $M_{0,1}$. This induces the following  equalities  
\begin{eqnarray}
\mathbb R[ M_{0,1}] = \mathbb R[ M_z] \text{  and }   \mathcal G_{M_z}=\mathcal G_{M_{0,1}} =\mathfrak{D}_{0,1}^n.
\end{eqnarray}
 We have thus proved the
\begin{proposition}\label{prop:isomorphismcomplexeigenvaluesnonsemiimple}
If a nonderogatory $\phi\in\mathfrak{gl}(V)$ has only $2$ eigenvalues which are  complex (thus conjugate),  and if $\phi$ is non-diagonalizable in $\mathbb C$, then $  \mathcal G_{\phi}=\mathfrak{D}_{0,1}^n.$
\end{proposition}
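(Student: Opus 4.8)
The plan is to transport the whole problem to one explicit model operator and then to prove that $\phi$ generates exactly the same commutative algebra of polynomials as that model. I would first apply the Jordanization result, Theorem \ref{thm:Jordanization}, to fix a basis of $V$ in which the matrix of $\phi$ is the real canonical matrix $M_z$ of (\ref{Jordancomplex}); since $\phi$ is nonderogatory, has the single complex-conjugate pair $z=r\pm is$ with $s\neq 0$, and is non-diagonalizable over $\mathbb C$, this $M_z$ is a single real block. In that basis I would decompose $M_z = r\,\mathbb I_V + sM_s + M_n$, with $M_s$ and $M_n$ the explicit matrices of (\ref{Ms-Mn-M01}), and recall that $M_{0,1}:=M_s+M_n$ is precisely the defining matrix of $\mathfrak D_{0,1}^n$ from Example \ref{ex:D01n}.

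The core of the argument is to recognize $M_s$ and $M_n$ as the semisimple and nilpotent parts of $M_{0,1}$. For this I would verify the three characterizing properties: $M_s$ is block diagonal with $2\times 2$ rotation blocks, hence semisimple with eigenvalues $\pm i$; $M_n$ is the shift-by-two matrix, hence nilpotent; and $M_s$ and $M_n$ commute, which is the displayed identity $M_sM_n=M_nM_s$. By the uniqueness of the additive Jordan--Chevalley decomposition over the perfect field $\mathbb R$, these three facts force $M_s$ and $M_n$ to be (constant-free) polynomials in $M_{0,1}$. Consequently $M_z = r\,\mathbb I_V + sM_s + M_n$ is a polynomial in $M_{0,1}$ as well, giving the inclusion $\mathbb R[M_z]\subseteq\mathbb R[M_{0,1}]$.

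To upgrade this inclusion to an equality I would invoke Lemma \ref{nonderogatorymatrices}: both $M_z$ and $M_{0,1}$ are nonderogatory, so each polynomial algebra has dimension $n$, and nested subspaces of equal finite dimension coincide. Hence $\mathbb R[M_z]=\mathbb R[M_{0,1}]$, and since the two semidirect sums are built from identical algebras acting on the same space, $\mathcal G_{M_z}=\mathbb R[M_z]\ltimes V=\mathbb R[M_{0,1}]\ltimes V=\mathcal G_{M_{0,1}}=\mathfrak D_{0,1}^n$. Finally, because $\phi$ is conjugate in $GL(V)$ to $M_z$, Remark \ref{rmq:nonderogatory} supplies an isomorphism $\mathcal G_\phi\cong\mathcal G_{M_z}$, which closes the chain.

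The one step that carries genuine content is the passage from \emph{$M_s,M_n$ are the semisimple and nilpotent parts} to \emph{$M_s,M_n\in\mathbb R[M_{0,1}]$}; everything surrounding it is either an explicit matrix verification or a dimension count. I expect the only place to be careful is confirming that the Jordan--Chevalley polynomiality is valid in this non-split real setting, but since it holds over any perfect field and commutes with the base change to $\mathbb C$, this raises no real obstacle.
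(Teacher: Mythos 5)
Your proof is correct and follows essentially the same route as the paper: Jordanize $\phi$ to $M_z=r\,\mathbb I_{V}+sM_s+M_n$, recognize $M_s$ and $M_n$ as the commuting semisimple and nilpotent (Jordan--Chevalley) parts of $M_{0,1}$, deduce that they (and hence $M_z$) are polynomials in $M_{0,1}$, and conclude $\mathbb R[M_z]=\mathbb R[M_{0,1}]$, so that $\mathcal G_\phi=\mathcal G_{M_{0,1}}=\mathfrak{D}_{0,1}^n$. Your explicit dimension count via Lemma \ref{nonderogatorymatrices} to upgrade the inclusion $\mathbb R[M_z]\subseteq\mathbb R[M_{0,1}]$ to an equality, and the appeal to Remark \ref{rmq:nonderogatory} for the conjugation back to $\phi$, merely make explicit details the paper leaves implicit.
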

Finally, we deduce the following.
\begin{proposition}\label{prop:all-complex-eigenvalues-nondiagonalizable}
 If all the eigenvalues $\lambda_1,\bar\lambda_1,\dots,\lambda_p,\bar\lambda_p$ of a nonderogatory $\phi\in\mathfrak{gl}(V),$ are  complex, then $  \mathcal G_{\phi}$  is  isomorphic to the direct sum $\mathfrak{D}_{0,1}^{2k_1}\oplus\dots\oplus \mathfrak{D}_{0,1}^{2k_p}$ of copies of $ \mathfrak{D}_{0,1}^{2k_j}$, where $k_j$ is the multiplicity of the eigenvalues $\lambda_j$, $\bar \lambda_j$.
\end{proposition}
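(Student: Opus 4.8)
The plan is to mirror the argument used for real eigenvalues in Lemma \ref{lem:multiplerealeigenvalues}, simply replacing the building blocks $\mathfrak{D}_0^{k}$ by $\mathfrak{D}_{0,1}^{2k}$. First I would invoke the Factorization Lemma (Lemma \ref{lemma:factorization}) to write
\[
\chi_\phi(X)=\prod_{j=1}^{p}\Big((X-Re(\lambda_j))^2+Im(\lambda_j)^2\Big)^{k_j},
\]
and set $\chi_j(X):=\big((X-Re(\lambda_j))^2+Im(\lambda_j)^2\big)^{k_j}$. Since the pairs $\lambda_j,\bar\lambda_j$ are pairwise distinct and nonreal, the polynomials $\chi_1,\dots,\chi_p$ are pairwise relatively prime, so the Primary Decomposition Theorem together with the Cayley--Hamilton theorem yields the $\phi$-stable decomposition $V=\mathcal E_1\oplus\cdots\oplus\mathcal E_p$, where $\mathcal E_j:=\ker\chi_j(\phi)$ and $\dim\mathcal E_j=2k_j$.

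Next I would apply Lemma \ref{lem:splitting} inductively to this decomposition to obtain the direct-sum splitting $\mathcal G_\phi=\mathcal G_{\phi_1}\oplus\cdots\oplus\mathcal G_{\phi_p}$, where $\phi_j:=\phi_{\vert_{\mathcal E_j}}$ and $\mathcal G_{\phi_j}=\mathbb R[\phi_j]\ltimes\mathcal E_j$. The key technical point I would then establish is that each restriction $\phi_j$ is again nonderogatory, with $\lambda_j,\bar\lambda_j$ as its only eigenvalues, each of multiplicity $k_j$. This follows because $\chi_j(\phi_j)=0$ on $\mathcal E_j$ forces $\chi_{\min,\phi_j}$ to divide $\chi_j$, while nonderogatory-ness of $\phi$ gives $\chi_{\min,\phi}=\chi_\phi=\prod_j\chi_j=\prod_j\chi_{\min,\phi_j}$ (the factors being coprime); comparing degrees forces $\chi_{\min,\phi_j}=\chi_j$, whose degree $2k_j$ equals $\dim\mathcal E_j$. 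Hence the minimal and characteristic polynomials of $\phi_j$ coincide, so $\phi_j$ is nonderogatory by assertion 5 of Lemma \ref{nonderogatorymatrices}.

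With $\phi_j$ nonderogatory and carrying a single conjugate pair of complex eigenvalues, I would finish by a case split. When $k_j\ge 2$ the map $\phi_j$ is non-diagonalizable over $\mathbb C$, and Proposition \ref{prop:isomorphismcomplexeigenvaluesnonsemiimple} gives $\mathcal G_{\phi_j}\cong\mathfrak{D}_{0,1}^{2k_j}$ directly. When $k_j=1$ the map is diagonalizable over $\mathbb C$, and Lemma \ref{lem:isomorphism4D} gives $\mathcal G_{\phi_j}\cong\mathfrak{aff}(\mathbb C)=\mathfrak{D}_{0,1}^{2}$, under the convention of Example \ref{ex:D01n}. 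Assembling these identifications over $j$ and using the splitting from the previous paragraph produces $\mathcal G_\phi\cong\mathfrak{D}_{0,1}^{2k_1}\oplus\cdots\oplus\mathfrak{D}_{0,1}^{2k_p}$.

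The main obstacle is precisely the verification that restriction to a primary component preserves nonderogatory-ness; once that is secured the statement collapses to the already-proved single-pair results. The only other place demanding care is the separate treatment of the $k_j=1$ case, so that Proposition \ref{prop:isomorphismcomplexeigenvaluesnonsemiimple}, which presumes non-diagonalizability, is not misapplied; the convention $\mathfrak{D}_{0,1}^{2}:=\mathfrak{aff}(\mathbb C)$ lets both subcases be recorded uniformly in the final direct sum.
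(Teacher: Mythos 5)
Your proposal is correct and follows essentially the same route as the paper's own proof, which likewise chains the Factorization Lemma (Lemma \ref{lemma:factorization}), the Primary Decomposition Theorem, Lemma \ref{lem:splitting}, and Proposition \ref{prop:isomorphismcomplexeigenvaluesnonsemiimple}. You additionally make explicit two points the paper leaves implicit, both to your credit: the coprimality/degree argument showing each restriction $\phi_j$ to a primary component is again nonderogatory, and the separate handling of the multiplicity-one case via Lemma \ref{lem:isomorphism4D} (with the convention $\mathfrak{D}_{0,1}^{2}=\mathfrak{aff}(\mathbb C)$), where Proposition \ref{prop:isomorphismcomplexeigenvaluesnonsemiimple} does not literally apply.
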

\begin{proof}Factorizing the characteristic polynomial of $\phi$, via Lemma \ref{lemma:factorization}, then successively applying the Primary Decomposition Theorem, Lemma \ref{lem:splitting},  together with Proposition \ref{prop:isomorphismcomplexeigenvaluesnonsemiimple}, yield the result.  
\end{proof}

\subsubsection{More on the Lie algebra $\mathfrak{D}_{0,1}^n$}\label{sect:more-on-D01n}

Without loss of generality,  
 we suppose that the form (\ref{Jordancomplex}) is achieved in the canonical basis $(\tilde e_1,\dots,\tilde e_n)$ of $\mathbb R^n.$ 
Note that $M_s$ and $M_n$ in  (\ref{Ms-Mn-M01}) satisfy $M_s^2=-\mathbb I_{\mathbb R^n},$ $M_n^{\frac{n}{2}}=0$, $(M_s M_n)^{\frac{n}{2}}=0$ and for any $j=1,\dots, \frac{n}{2}-1,$
\begin{eqnarray}  \Big(M_n\Big)^{j}=\sum\limits_{p=1}^{n-2j} E_{p,p+2j} \text{ and }
 M_s  \Big(M_n\Big)^{j}=-\sum\limits_{p=0}^{\frac{n}{2}-j-1}\Big( E_{2p+1,2p+2j+2}-  E_{2p+2,2p+2j+1}\Big)
 .\nonumber\end{eqnarray} 
Each $ M_{0,1}^{p}  = (M_s+M_n)^p=\sum\limits_{j=0}^p \complement_p^j \; (M_s)^j\; (M_n)^{p-j},$  $p=0,1,2,\dots,n-1$, is a linear combination of $\mathbb I_{\mathbb R^n}$, $M_s$, $(M_n)^{j},$ $M_s  (M_n)^{j}$, $j=1,\dots, \frac{n}{2}-1,$  where as above, $\complement_p^j=\frac{p!}{j!(p-j)!}$, $p\ge j$.  
Thus $e_1:=\mathbb I_{\mathbb R^n},$ $ e_2:=M_s,$ $ e_{2j+1}:=(M_n)^{j},$ $ e_{2j+2}:=-M_s  (M_n)^{j},$ $ j=1,\dots, \frac{n}{2}-1,$ is another basis of  $\mathbb R[M_{0,1}]$. 
In the basis ($e_s,\tilde e_s$, $s=1,\dots,n$), the Lie bracket of $\mathfrak{D}_{0,1}^n$ reads, for any  $q=0,1,\dots,\frac{n}{2}-1$  and $i=1,\dots,n,$

$ [e_{2q+1},\tilde e_i]=\tilde e_{i-2q},\;\; \text{ if }2q+1\leq i \leq n, \text{ and } [e_{2q+1},\tilde e_i]=0 \text{ otherwise, } $

$[e_{2q+2},\tilde e_i]=\left\{
\begin{array}{cll}
-\tilde e_{i-2q+1}, & \text{ if $i$ odd and } 2q+1\leq i \leq n-1, \\
\tilde e_{i-2q-1}, & \; \text{ if $i$ even and } 2q+2\leq i \leq n, \\
  0, & \; \text{ otherwise.}
\end{array}\right.
$

So the 2-form $\partial \tilde e_1^*=- \sum\limits_{k=1}^ne_{k}^*\wedge \tilde e_{k}^*$ is non-degenerate on  $\mathfrak{D}_{0,1}^n$.
 The codimension $2$ subspace  
$\mathcal N:=span\Big((M_n)^{j}, M_s  (M_n)^{j}, \; j=1,\dots, \frac{n}{2}-1
\Big)\ltimes \mathbb R^n$  is an ideal of $\mathfrak{D}_{0,1}^n$, as it  contains the derived ideal $\mathbb R^n =[\mathfrak{D}_{0,1}^n,\mathfrak{D}_{0,1}^n]$. The equalities $(M_n)^{\frac{n}{2}} = (M_s  M_n)^{\frac{n}{2}} =0$,  show that $\mathcal N$ is $\frac{n}{2}$-step nilpotent, and is in fact the nilradical of $\mathfrak{D}_{0,1}^n$.  Indeed, if we set $\mathfrak{F}:=\mathbb R \mathbb I_{\mathbb R^n} \oplus \mathbb R M_{0,1}$, then the vector space underlying  $\mathfrak{D}_{0,1}^n$ splits as the direct sum $\mathfrak{D}_{0,1}^n = \mathfrak{F}\oplus \mathcal N$, so that, any subspace of dimension higher than $n-2$, must meet  $\mathfrak{F}$ non-trivially and hence cannot be a nilpotent subalgebra of  $\mathfrak{D}_{0,1}^n$. Thus $\mathcal N$ is the biggest nilpotent ideal of $\mathfrak{D}_{0,1}^n$.
Altogether, $\mathfrak{D}_{0,1}^n$ is an  indecomposable, non-completely solvable (the adjoint of $M_s$ has complex eigenvalues) $2$-solvable Frobenius Lie algebra with a codimension 2 non-Abelian nilradical $\mathcal N$.
One sees that $\mathfrak{D}_{0,1}^n$ is not isomorphic to 
 $\mathfrak{D}_0^n$  which is completely solvable and has a codimension $1$ non-Abelian nilradical (except for $\mathfrak{D}_0^1=\mathfrak{aff}(\mathbb R)$).

\subsection{Derivations and automorphisms of $\mathcal G_\phi$}
 
\begin{proposition}\label{prop:derivations} Let $\phi\in \mathfrak{gl}(V)$ be nonderogatory.
Set  
\begin{eqnarray}
\mathfrak{N} :=\{N\in\mathfrak{gl}(V), \text{ such that } [ N,\mathbb K[\phi]]\subset \mathbb K[\phi]\}=\mathcal N_{\mathfrak{gl}(V)}(\mathbb K[\phi]).\end{eqnarray}
Up to isomorphism, the Lie algebra $Der(\mathcal G_\phi)$ of derivations of   $\mathcal G_\phi$  is given by
\begin{eqnarray}
Der(\mathcal G_\phi)=\mathfrak{N} \ltimes  V.\end{eqnarray}
More precisely,  $D\in Der(\mathcal G_\phi)$ if and only if there are  $x_D\in V$ and $h \in\mathfrak{N},$ such that $D$
 is of the form
$D(a+x) =[h,a]+ ax_D+h(x),$
for every $a\in\mathbb K[\phi]$ and $x\in V.$ \end{proposition}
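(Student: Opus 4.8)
The plan is to pin down a derivation by its action on the two natural pieces of $\mathcal G_\phi=\mathbb K[\phi]\oplus V$ and to read off all constraints from the three families of brackets. First I would observe that, $\phi$ being nonderogatory, there is a cyclic vector $\bar x$ with $\mathbb K[\phi]\bar x=V$, whence $[\mathcal G_\phi,\mathcal G_\phi]=\mathbb K[\phi]V=V$ (the other brackets vanish). Since the derived ideal is a characteristic ideal, every $D\in Der(\mathcal G_\phi)$ satisfies $D(V)\subset V$; set $h:=D|_V\in\mathfrak{gl}(V)$. For $a\in\mathbb K[\phi]$ I would then write $D(a)=\delta(a)+\tau(a)$ with $\delta(a)\in\mathbb K[\phi]$ and $\tau(a)\in V$, both linear in $a$.

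Next I would extract the constraints. Applying the Leibniz rule to $[a,b]=0$ for $a,b\in\mathbb K[\phi]$ and using that $\mathbb K[\phi]$ is Abelian gives $a\tau(b)=b\tau(a)$; choosing $b=\mathbb I_{V}$ yields $\tau(a)=a\,x_D$ with $x_D:=\tau(\mathbb I_{V})\in V$. Applying the Leibniz rule to $[a,x]=ax$ gives $h(ax)=\delta(a)x+a\,h(x)$ for every $x\in V$, that is $[h,a]=\delta(a)$ as operators on $V$; since $\delta(a)\in\mathbb K[\phi]$ this forces $[h,a]\in\mathbb K[\phi]$ for all $a\in\mathbb K[\phi]$, i.e. $h\in\mathfrak N$. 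Thus every derivation has the announced form $D(a+x)=[h,a]+a\,x_D+h(x)$, and conversely a direct verification on the three bracket types $[x,y]$, $[a,b]$, $[a,x]$ shows that for any $(h,x_D)\in\mathfrak N\times V$ this formula does define a derivation.

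Finally I would upgrade the bijection $D\mapsto(h,x_D)$ to a Lie algebra isomorphism $Der(\mathcal G_\phi)\cong\mathfrak N\ltimes V$. Writing $D_i\mapsto(h_i,x_i)$ and computing $[D_1,D_2]=D_1D_2-D_2D_1$ on $a+x$, the Jacobi identity collapses the $\mathbb K[\phi]$-component to $[[h_1,h_2],a]$, while the $V$-component reduces, after cancellation, to $a(h_1x_2-h_2x_1)+[h_1,h_2]x$; this is precisely the derivation attached to the pair $([h_1,h_2],\,h_1x_2-h_2x_1)$, which is exactly the bracket of the semidirect sum $\mathfrak N\ltimes V$. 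The conceptual crux is recognizing that the cross-bracket relation is nothing but the normalizer condition $h\in\mathfrak N$; the only genuinely laborious step is this last commutator computation, which is nonetheless routine once the Jacobi identity is invoked.
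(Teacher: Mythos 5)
Your proof is correct and follows essentially the same route as the paper's: decompose $D$ along $\mathbb K[\phi]\oplus V$ using that the derived ideal $V$ is preserved, extract $\tau(a)=ax_D$ from the Abelian brackets and $\delta(a)=[h,a]\in\mathbb K[\phi]$ (hence $h\in\mathfrak N$) from $[a,x]=ax$, then check that $D\mapsto(h,x_D)$ intertwines commutators with the semidirect-sum bracket $([h_1,h_2],\,h_1x_{D_2}-h_2x_{D_1})$. The only cosmetic differences are that you derive $\tau(a)=ax_D$ from the general relation $a\tau(b)=b\tau(a)$ before specializing $b=\mathbb I_V$, where the paper specializes immediately, and your appeal to a cyclic vector for $[\mathcal G_\phi,\mathcal G_\phi]=V$ is unnecessary since $\mathbb I_V\in\mathbb K[\phi]$ already gives it.
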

\begin{proof}
Let $D\in Der(\mathcal G_\phi)$. As the derived ideal $V=[\mathcal G_\phi,\mathcal G_\phi]$ is preserved
by $D$, we write $D(x)=h(x)$ and
$D(a)=D_{1,1}(a)+D_{1,2}(a),$  for $a\in\mathbb K[\phi]$,  $x\in
 V,$ where $D_{1,1}\in\mathfrak{gl}(\mathbb K[\phi])$, $h\in\mathfrak{gl}(V)$ and $D_{1,2}: \mathbb K[\phi]\to V$ is linear.
Now, setting  $e_1:=\mathbb I_{V}$ and  $x_D:=D_{12}(e_{1})$, the equality $D_{12}(a) =ax_D,$ for any $a\in \mathbb K[\phi]$, follows:
$0=D([e_{1}, a])$$=[D(e_{1}),a]+ [e_{1},D(a)] $$=
 [x_D,a] + [e_{1},D_{12}(a)]$$
=D_{12}(a)-ax_D.$ 
We also have
$h(ax) =  D[a,x]=
[D_{11}(a),x]+[a,h(x)] =D_{11}(a)x+ah(x)$, 
thus entailing $[h,a] = D_{11}(a) \in
\mathbb K[\phi],$ for any $a\in\mathbb K[\phi].$
So the equality $ D_{11}(a): = [h,a]$ stands as the definition of $D_{11} (a)$ and implies  $h\in  \mathfrak N$. 
Thus for any $D$ in $ Der(\mathcal G_\phi)$, there are $x_D\in V$,  $h\in  \mathfrak N$ such that $D(a+x)=[h,a]+h(x)+ax_D.$
 Conversely, any $(h,\hat x)\in\mathfrak{N}\times V$  defines a unique 
 $D\in Der(\mathcal G_\phi)$ by the formula $D(a+x)=[h,a]+h(x)+a\hat x.$ 
We get the invertible linear $\psi : Der(\mathcal G_\phi)\to \mathfrak N\ltimes V$, $D\mapsto (h,x_D)$.
 As easily seen,  for any $D_j\in Der(\mathcal G_\phi)$  given by $(h_j, x_{D_j})$, $j=1,2,$ we have
$\psi([D_1,D_2])= ([h_1,h_2],h_1(x_{D_2})
- h_2(x_{D_1}))=[\psi(D_1),\psi(D_2)] $. 
Hence $\psi$ is an isomorphism between the Lie algebras $Der(\mathcal G_\phi)$ and $ \mathfrak N\ltimes V.$
\end{proof}
 Note that in Proposition \ref{prop:derivations}, the inner derivations of $\mathcal G_\phi$ are those for which the component $h$ belongs to $\mathbb K[\phi]$.
\begin{example}\label{ex:derivationsofD0n} The space $Der(\mathfrak{D}_0^n)$ of derivations of $\mathfrak{D}_0^n$.

For $n=2,$
the normalizer of  $\mathbb R[E_{1,2}]$ in $\mathfrak{gl}(2,\mathbb R),$ is the 3-dimensional 
real algebra spanned by $E_{1,1},E_{1,2},E_{22}$, which is $1$ dimension higher than   $\mathbb R[E_{1,2}]$. For example, $E_{2,2}$ is not in  $\mathbb R[E_{1,2}]$  and will thus act as an outer derivation on $\mathfrak{D}_0^2.$ Considering elements of $\mathbb R^2$ as inner derivations, we see that $Der(\mathfrak{D}_0^2)$ is spanned by $E_{1,1},E_{1,2},E_{22}, \tilde e_1,\tilde e_2.$ 
For $n=3,$
the normalizer of  $\mathbb R[E_{1,2}+E_{2,3}]$ in $\mathfrak{gl}(3,\mathbb R),$ is the $5$-dimensional algebra  of matrices spanned by $E_{1,1}-E_{3,3},$ $E_{22}+2E_{33}$, $E_{1,2}$, $E_{2,3}$, $E_{1,3}$.  For example,  $E_{1,1}-E_{3,3},$ $E_{22}+2E_{33}$, are not in  $\mathbb R[E_{1,2}+E_{2,3}]$, so they represent outer derivations of $\mathfrak{D}_0^3.$ Counting elements of $\mathbb R^3$ in, as inner derivations,  we get $\dim Der(\mathfrak{D}_0^3)=8.$
More generally, for a given $n\ge 4$, if we set $M_0:=E_{1,2}+E_{2,3}+\dots+E_{n-1,n}$ as in (\ref{principalnilpotent}), then the normalizer of $\mathbb R[M_0]$ in $\mathfrak{gl}(n,\mathbb R),$  is of dimension $2n-1$ and is spanned by 
$D_k:=E_{1,k}-\sum\limits_{j=3}^{n-k+1}(j-2)E_{j,j+k-1}$, $D_{k}':=E_{2,k+1}+ \sum\limits_{j=3}^{n-k+1}(j-1)E_{j,j+k-1}$, $k=1,\dots,n-2$ and $D_{n-1}:=E_{1,n-1}$, $D_{n-1}':=E_{2,n}$, $D_n:=E_{1,n}.$ For example, $D_1$ and $D_1'$ are not in $\mathbb R[M_0]$ and will act as non-trivial outer derivations of $\mathfrak{D}_0^n$.  In particular $\mathbb R[M_{0}]$ is not a Cartan subalgebra of $\mathfrak{gl}(n,\mathbb R)$ and $\dim Der(\mathfrak{D}_0^n)=3n-1.$
\end{example}

\begin{example}\label{ex:derivationsofD01n} On the space $Der(\mathfrak{D}_{0,1}^n)$ of derivations of $\mathfrak{D}_{0,1}^n$.

For $n=4$, consider the $n\times n$ matrix $M_{0,1}=E_{2,1} -E_{1,2}+E_{4,3}-E_{3,4}+E_{1,3} +E_{2,4} $ as in Example \ref{ex:D01n}. The normalizer of $\mathbb R[M_{0,1}]$ in $\mathfrak{gl}(4,\mathbb R),$ is of dimension $6$  and is spanned by the matrices  $E_{1,1}+E_{2,2},$ $E_{3,3}+E_{4,4},$ $E_{1,2}-E_{2,1},$ $E_{3,4}-E_{4,3},$  $E_{1,3}+E_{2,4},$ $E_{2,3}-E_{1,4}$. In particular the two matrices $E_{1,1}+E_{2,2},$ $E_{3,3}+E_{4,4},$ are not elements of $\mathbb R[M_{0,1}]$. So they both represent non-trivial outer derivations of $\mathfrak{D}_{0,1}^4$. 
More generally, for any $n\ge 6$, we let again $M_{0,1}$ stand for the $n\times n$ matrix $M_{0,1}=M_s+M_n$ as in (\ref{Ms-Mn-M01}).
 The $n\times n$ matrix   $
Z_1:=E_{1,1}+E_{2,2}-\sum\limits_{j=2}^{\frac{n}{2}-1}(j-1)(E_{2j+1,2j+1}+E_{2j+2,2j+2})
$
is in the normalizer of $\mathbb R[M_{0,1}]$ in $\mathfrak{gl}(n,\mathbb R)$, but not in $\mathbb R[M_{0,1}]$. Indeed, for any $s\ge 1,$ we have $  [Z_1, \Big(M_n\Big)^{s}] 
= s\sum\limits_{j=1}^{n-2s}E_{j,j+2s} = s \Big(M_n\Big)^{s}$, $[Z_1, M_s] =0$ and  $[Z_1, M_s\Big(M_n\Big)^{s}] = M_s[Z_1, \Big(M_n\Big)^{s}] + [Z_1, M_s] \Big(M_n\Big)^{s}=  sM_s\Big(M_n\Big)^{s}.$ So the linear map $M\mapsto [Z_1,M]$ preserves $\mathbb R[M_{0,1}]$ and has a diagonal matrix diag($0,0,1,2,\dots,\frac{n}{2}-1, 1,2,\dots,\frac{n}{2}-1$) in the basis $(\mathbb I_{\mathbb R^n}, M_s, M_n,(M_n)^2,\dots,(M_n)^{\frac{n}{2}-1}, M_s M_n, M_s(M_n)^2,\dots,M_s(M_n)^{\frac{n}{2}-1})$ and  $Z_1\notin\mathbb R[M_{0,1}]$, given that $\mathbb R[M_{0,1}]$ is Abelian. In particular $\mathbb R[M_{0,1}]$ is not a Cartan subalgebra of $\mathfrak{gl}(n,\mathbb R)$. Also, $Z_1$ will act as an outer derivation of $\mathfrak{D}_{0,1}^n$.
\end{example}
We have the following.
\begin{theorem}\label{prop:abeliannilrad}Let $\phi\in\mathfrak{gl}(n,\mathbb R)$ be nonderogatory. The following are equivalent. 

(1) Every derivation of $\mathcal G_\phi$ is an inner derivation.

(2) $\mathbb R[\phi]$ is a Cartan subalgebra of $\mathfrak{gl}(n,\mathbb R)$.

(3)  $\phi$ has $n$ distinct  (real or  complex) eigenvalues. 

(4) The nilradical of $\mathcal G_\phi$ is Abelian.

(5)  $\mathcal G_\phi$ is the direct sum 
of only copies of $\mathfrak{aff}(\mathbb R)$ and $\mathfrak{aff}(\mathbb C)$.
\end{theorem}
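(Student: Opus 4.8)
The plan is to establish the five conditions via the chain of equivalences $(1)\Leftrightarrow(2)$, $(2)\Leftrightarrow(3)$, $(3)\Leftrightarrow(5)$ and $(4)\Leftrightarrow(5)$, using the derivation description of Proposition \ref{prop:derivations} for the first, a normalizer computation for the second, and the classification Theorem \ref{thm:classification} together with the uniqueness of the decomposition into indecomposables (Remark \ref{rmq:indecomposable}) for the last two.

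For $(1)\Leftrightarrow(2)$ I would invoke Proposition \ref{prop:derivations}, which identifies $Der(\mathcal G_\phi)$ with $\mathfrak N\ltimes V$, where $\mathfrak N=\mathcal N_{\mathfrak{gl}(V)}(\mathbb R[\phi])$, together with the remark that an inner derivation is precisely one whose $h$-component lies in $\mathbb R[\phi]$. Hence every derivation of $\mathcal G_\phi$ is inner if and only if $\mathfrak N=\mathbb R[\phi]$. Since $\mathbb R[\phi]$ is abelian, hence nilpotent, this last equality says exactly that $\mathbb R[\phi]$ is its own normalizer, i.e. that it is a Cartan subalgebra of $\mathfrak{gl}(n,\mathbb R)$. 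So $(1)$ and $(2)$ become the same condition once read through Proposition \ref{prop:derivations}.

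The central step is $(2)\Leftrightarrow(3)$, a normalizer computation. For $(3)\Rightarrow(2)$ I would complexify: if $\phi$ has $n$ distinct eigenvalues then over $\mathbb C$ it is diagonalizable with pairwise distinct diagonal entries, so in the eigenbasis $\mathbb C[\phi]$ is the full diagonal and a one-line commutator computation shows its normalizer in $\mathfrak{gl}(n,\mathbb C)$ is again the diagonal, namely $\mathbb C[\phi]$. As $\mathfrak N\otimes_{\mathbb R}\mathbb C$ embeds into this normalizer, $\dim_{\mathbb R}\mathfrak N\le n$; combined with $\mathbb R[\phi]\subseteq\mathfrak N$ and $\dim_{\mathbb R}\mathbb R[\phi]=n$ (Lemma \ref{nonderogatorymatrices}) this forces $\mathfrak N=\mathbb R[\phi]$, giving $(2)$. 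For the contrapositive $\neg(3)\Rightarrow\neg(2)$, a repeated eigenvalue together with nonderogatoriness forces a Jordan block of size $\ge 2$, hence by Lemma \ref{lem:splitting} and Theorem \ref{thm:classification} a summand isomorphic to $\mathfrak D_0^{k}$ with $k\ge 2$ or to $\mathfrak D_{0,1}^{2m}$ with $m\ge 2$. Examples \ref{ex:derivationsofD0n} and \ref{ex:derivationsofD01n} exhibit, on such a block, an element of the normalizer of $\mathbb R[\phi_j]$ lying strictly outside $\mathbb R[\phi_j]$. Extending it by zero on the complementary blocks is legitimate because the spectral projections are polynomials in $\phi$, so that $\mathbb R[\phi]\cong\prod_j\mathbb R[\phi_j]$; this produces an element of $\mathfrak N\setminus\mathbb R[\phi]$, and $\mathbb R[\phi]$ fails to be self-normalizing.

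Finally, for $(3)\Leftrightarrow(5)$ and $(4)\Leftrightarrow(5)$ I would feed the spectral data into Theorem \ref{thm:classification}, writing $\mathcal G_\phi\cong\bigoplus_j\mathfrak D_0^{k_j}\oplus\bigoplus_l\mathfrak D_{0,1}^{2m_l}$. Condition $(3)$ says every $k_j=1$ and every $m_l=1$, and since $\mathfrak D_0^1=\mathfrak{aff}(\mathbb R)$ and $\mathfrak D_{0,1}^2=\mathfrak{aff}(\mathbb C)$, this is exactly $(5)$; conversely, the blocks $\mathfrak D_0^{k}$ ($k\ge 2$) and $\mathfrak D_{0,1}^{2m}$ ($m\ge 2$) are indecomposable (Corollary \ref{corollary1}) and, by dimension, isomorphic to neither $\mathfrak{aff}(\mathbb R)$ nor $\mathfrak{aff}(\mathbb C)$, so the uniqueness of the decomposition into indecomposables (Remark \ref{rmq:indecomposable}) forces all multiplicities to be $1$ whenever $\mathcal G_\phi$ is a sum of copies of $\mathfrak{aff}(\mathbb R)$ and $\mathfrak{aff}(\mathbb C)$. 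For $(4)\Leftrightarrow(5)$ I would use that the nilradical of a direct sum is the direct sum of the nilradicals: $\mathfrak{aff}(\mathbb R)$ and $\mathfrak{aff}(\mathbb C)$ have abelian ($1$- and $2$-dimensional) nilradicals, whereas $\mathfrak D_0^{k}$ ($k\ge 2$) and $\mathfrak D_{0,1}^{2m}$ ($m\ge 2$) have non-abelian nilradicals (Example \ref{ex:D0n} and Section \ref{sect:more-on-D01n}), so the nilradical of $\mathcal G_\phi$ is abelian exactly when every summand is an $\mathfrak{aff}$, i.e. when $(5)$ holds. I expect the main obstacle to be the converse half of $(2)\Leftrightarrow(3)$: converting the abstract hypothesis ``$\phi$ has a repeated eigenvalue'' into a concrete outer derivation, which is precisely where the explicit normalizer computations of Examples \ref{ex:derivationsofD0n} and \ref{ex:derivationsofD01n} carry the real content.
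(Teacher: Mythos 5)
Your proposal is correct and complete, and on the pivotal step it takes a genuinely different route from the paper. The paper handles $(1)\Leftrightarrow(2)$ exactly as you do, via Proposition \ref{prop:derivations}, and proves $(5)\Rightarrow(4)$ and $(4)\Rightarrow(5)$ by the same nilradical comparison you use; but for $(2)\Leftrightarrow(3)\Leftrightarrow(5)$ it simply cites Theorem \ref{thm:cartansubalgebras}. Since the paper's own proof of Theorem \ref{thm:cartansubalgebras} invokes this very theorem to obtain its implication (b)$\Rightarrow$(a) --- which is precisely $(3)\Rightarrow(2)$ --- the two proofs as written lean on each other. Your self-contained argument for $(2)\Leftrightarrow(3)$ breaks that cycle: for $(3)\Rightarrow(2)$ you complexify, identify $\mathbb C[\phi]$ with the full diagonal algebra in an eigenbasis so that its normalizer is itself, and conclude $\dim_{\mathbb R}\mathfrak N\le n=\dim_{\mathbb R}\mathbb R[\phi]$, forcing $\mathfrak N=\mathbb R[\phi]$; for $\neg(3)\Rightarrow\neg(2)$ you export the outer normalizing elements of Examples \ref{ex:derivationsofD0n}, \ref{ex:derivationsofD01n} block-by-block, the extension by zero being justified by the Chinese-Remainder splitting $\mathbb R[\phi]\cong\prod_j\mathbb R[\phi_j]$. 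This is exactly the role those examples are designed to play, and making the reduction explicit (rather than routing through Theorem \ref{thm:cartansubalgebras} and Sugiura's classification) is the real added value of your write-up: it yields a proof that does not depend on the external classification of Cartan subalgebras at all.

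One small repair is needed in your $(5)\Rightarrow(3)$ step: the phrase ``by dimension'' does not exclude $\mathfrak D_0^{2}\cong\mathfrak{aff}(\mathbb C)$, since both are $4$-dimensional. They are nevertheless non-isomorphic: $\mathfrak D_0^{2}$ has a $3$-dimensional Heisenberg nilradical (equivalently, it is completely solvable), while $\mathfrak{aff}(\mathbb C)$ has a $2$-dimensional Abelian one. This is exactly the invariant you already deploy for $(4)\Leftrightarrow(5)$, so the fix costs nothing, but as stated the dimension count alone does not close that case.
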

\begin{proof}The equivalence between (1) and (2) directly follows from Proposition \ref{prop:derivations}. Indeed, every derivation of $\mathcal G_\phi$ is inner, if and only if the normalizer of $\mathbb R[\phi]$ in  $\mathfrak{gl}(n,\mathbb R),$  coincides with $\mathbb R[\phi]$. 
The equivalence between (2), (3) and (5) has been shown in Theorem \ref{thm:cartansubalgebras}.
The proof that (5) implies (4) directly follows from the fact that each copy, of either  $\mathfrak{aff}(\mathbb R)$ or $\mathfrak{aff}(\mathbb C)$, has an Abelian nilradical.  We now prove that (4) implies (5).
In the decomposition of $\mathcal G_\phi$ (Theorem \ref{thm:classification}), the nilradical of $\mathcal G_\phi$ is the sum of the nilradicals of  its ideals. From Examples \ref{ex:derivationsofD0n}, \ref{ex:derivationsofD01n}, the nilradicals of $\mathcal D_0^k$  and $\mathcal D_{0,1}^k$  are not Abelian, 
for any $k\ge 2$. So in order for 
$\mathcal G_\phi$ to have an Abelian nilradical, it must not contain an ideal isomorphic $\mathcal D_0^k$  or $\mathcal D_{0,1}^k$, $k\ge 2$.  
\end{proof}

Note that Theorem \ref{prop:abeliannilrad} is in agreement with the classification of Cartan subalgebras of $\mathfrak{gl}(n,\mathbb R)$ supplied by Kostant (\cite{kostant}) and Sugiura (\cite{sugiura}).
  Theorem \ref{prop:abeliannilrad}  also implies that  
 when $\mathbb R[\phi]$ is a Cartan subalgebra of $\mathfrak{gl}(n,\mathbb R)$, then  $\mathcal G_\phi$ is a part of the 2-solvable Lie algebras studied in \cite{alvarez-abelian-nilradical}, \cite{ndogmo-winternitz}.

\subsection{Proof of Theorem \ref{thm:cartansubalgebras}}\label{proofofCartan-subalgebras-sl(n,R)} 
From Proposition \ref{prop:abeliannilrad}, for a given $n$, the number of isomorphism classes of $2$-solvable Frobenius Lie algebras 
of dimension $2n$  of the form $\mathcal G_\phi :=\mathbb R[\phi]\ltimes \mathbb R^n$, where $\mathbb R[\phi]$ is a Cartan subalgebra of $\mathfrak{gl}(n,\mathbb R),$  is
 exactly $[\frac{n}{2}]+1$. Indeed, one can look at $[\frac{n}{2}]+1$ as the number (counting from zero)
of possible copies of $\mathfrak{aff}(\mathbb C)$ that one can count in a decomposable Lie algebra 
containing only copies of either $\mathfrak{aff}(\mathbb C)$ or $\mathfrak{aff}(\mathbb R)$.
On the other hand, from e.g. \cite{sugiura}, there are exactly $[\frac{n}{2}]+1$ non-conjugate Cartan subalgebras of 
$\mathfrak{gl}(n,\mathbb R).$ So we have derived Theorem \ref{thm:cartansubalgebras}, in a simple and direct way. More precisely we prove it as follows.
\begin{proof}
 Let $\mathfrak{h}$ be a Cartan subalgebra of
 $\mathfrak{sl}(n,\mathbb R)$ with a $k$-dimensional toroidal part. From \cite{sugiura}, up to conjugacy under an element of the Weyl group, there is a basis ($\tilde e_1,\dots, \tilde e_n$), considered here as the canonical basis of $\mathbb R^n$, in which   $\mathfrak{h}$ is of the form
$\mathfrak{h}=\Big\{\begin{pmatrix} D_1&-D_2&{\mathbf 0}\\
D_2 &D_1&{\mathbf 0}\\
{\mathbf 0}&{\mathbf 0}&D_3
\end{pmatrix}\Big\}$, where $D_1=\text{diag}(h_1,\dots,h_{k})$, 
$D_2=\text{diag}(h_{k+1},\dots,h_{2k}),$ $D_3=\text{diag}(h_{2k+1},\dots,h_{n})$ with $ h_j\in\mathbb R, j=1,\dots,n$. So that, $\mathfrak{h}$ is conjugate to 
$\mathfrak{h}'=\Big\{M:=\text{diag}(D_1',\dots,D_k', D_3),$  with  
$D_j'=\begin{pmatrix} { h_j} & -h_{k+j} \\
h_{k+j} &{ h_j}
\end{pmatrix}$ and $ D_3=\text{diag}(h_{2k+1},\dots,h_{n}), 
 h_j\in\mathbb R, j=1,\dots,n\;\Big\}$,
 obtained by reordering the canonical basis of 
$\mathbb R^n$ into
 $(\tilde e_1, \tilde e_{k+1}, \tilde e_2, \tilde e_{k+2}, \dots,\tilde e_k,
 \tilde e_{2k},\tilde e_{2k+1},\tilde e_{2k+2},\dots,\tilde e_{n}).$ An $M$ in $\mathfrak{h}'$ is
 nonderogatory if and only if $(h_i,h_{k+i}) \neq (h_j,h_{k+j})$, whenever $i\neq j$ and $h_{2k+s}\neq h_{2k+l}$ whenever $s\neq l.$ But the existence of an (regular) element satisfying these
conditions is guaranteed by the fact that $\mathfrak{h}$ is a Cartan subalgebra. Hence, there exists a nonderogatory $M$ with the $n$ distinct eigenvalues $h_1+ih_{k+1}, h_1-ih_{k+1},\dots , h_k+ih_{2k}, h_k-ih_{2k},$ $h_{2k+1},\dots, h_{n}$, such that, up to a conjugation, $\mathbb I_{\mathbb R^n}\oplus\mathfrak{h}=\mathbb R[M]=\mathfrak{B}_{\mathfrak{h}}$. So (a) implies (b).   Furthermore, Theorem \ref{thm:classification} ensures that the Lie algebra $\mathfrak{B}_{\mathfrak{h}}\ltimes \mathbb R^n$ is isomorphic to the direct sum $\mathfrak{aff}(\mathbb C)\oplus\dots\oplus\mathfrak{aff}(\mathbb C)\oplus\mathfrak{aff}(\mathbb R)\oplus\dots\oplus\mathfrak{aff}(\mathbb R)$ of $k$ copies of $\mathfrak{aff}(\mathbb C)$ and $(n-k)$ of $\mathfrak{aff}(\mathbb R)$. In fact, the equivalence between (b) and (c) has already been proved  by Theorem \ref{thm:classification}. 
Conversely, suppose $M\in\mathfrak{gl}(n,\mathbb R)$ is nonderogatory and has $n$ distinct eigenvalues, 2k of which are complex. 
From Proposition \ref{prop:abeliannilrad}, $\mathbb R[M]$ is a Cartan subalgebra of $\mathfrak{gl} (n,\mathbb R)$ and so (b) implies (a). Furthermore, using the Primary Decomposition Theorem and results from Section \ref{Sect:complexeigenvalues}, we put $\mathbb R[M]$ in the form $\mathfrak{h}'$.
\end{proof}

\section{Classification of low dimensional 2-solvable Frobenius Lie algebras}\label{sect:6D}
Applying Theorem \ref{thm:classification} and the analysis done above,
we get Theorem \ref{thm:classification-D6} below which provides, up to isomorphism, a complete list of all 2-solvable Frobenius Lie algebras of dimension $2$, $4,$ $6$ or $8.$ 
We note that $\mathfrak{D}_0^2$, $\mathfrak{aff}({\mathbb C})$
and $\mathfrak{aff}({\mathbb R})\oplus \mathfrak{aff}({\mathbb R})$ correspond to the family PHC7 of \cite{4d-para-hypercomplex} 
and to S11, S8 and S10 respectively in \cite{snow}. We also note that $\mathfrak{D}_0^2\oplus \mathfrak{D}_0^2$ is missing from the list in  \cite{winternitz-zassenhaus}, see Section  \ref{revisit-Winternitz-Zassenhaus}. It is also worth recalling here, that the Lie algebras $\mathcal G_{4,2}$ of Example  \ref{example-general-nD} and $\mathfrak{h}_{4,2}$ of Example \ref{example2a}  are isomorphic via the isomorphism (\ref{isom:Gn2-hn2}).
\begin{theorem} \label{thm:classification-D6} A 2-solvable Frobenius Lie algebra of dimension $\leq 6$ is either isomorphic to
 $\mathfrak{aff}({\mathbb R})$, $\mathfrak{D}_0^2$, $\mathfrak{aff}({\mathbb C})$, $\mathfrak{D}_0^3$, $\mathcal G_{3,1}$ as in Example \ref{example-general-nD}, or to one of the direct sums 
$\mathfrak{aff}({\mathbb R})\oplus \mathfrak{aff}({\mathbb R})$,  
 $\mathfrak{D}_0^2 \oplus\mathfrak{aff}({\mathbb R})$, 
$\mathfrak{aff}({\mathbb C}) \oplus \mathfrak{aff}({\mathbb R})$, 
$\mathfrak{aff}({\mathbb R})\oplus \mathfrak{aff}({\mathbb R}) \oplus \mathfrak{aff}({\mathbb R})$ of their copies. 
%
In dimension $8$, there are $14$  non-isomorphic  $2$-solvable Frobenius Lie algebras, $9$ of which are of the form $\mathcal G_M$, for some  nonderogatory $M\in\mathfrak{gl}(4,\mathbb R)$, namely
${\mathfrak D}_{0,1}^{4}$,  $\mathfrak{D}_0^4$,  the direct sums $\mathfrak{D}_0^3 \oplus \mathfrak{aff}({\mathbb R}),$
 $\mathfrak{D}_0^2\oplus \mathfrak{D}_0^2$, 
 $\mathfrak{D}_0^2\oplus \mathfrak{aff}({\mathbb C})$, 
 $\mathfrak{D}_0^2\oplus \mathfrak{aff}({\mathbb R}) \oplus \mathfrak{aff}({\mathbb R})$,
 $\mathfrak{aff}({\mathbb C})\oplus \mathfrak{aff}({\mathbb C})$,
$\mathfrak{aff}({\mathbb C})\oplus \mathfrak{aff}({\mathbb R}) \oplus \mathfrak{aff}({\mathbb R})$,
$\mathfrak{aff}({\mathbb R}) \oplus \mathfrak{aff}({\mathbb R})\oplus \mathfrak{aff}({\mathbb R}) \oplus \mathfrak{aff}({\mathbb R})$,
and $5$ of which are   not given by a nonderogatory $M\in\mathfrak{gl}(4,\mathbb R)$, namely 
$\mathcal G_{4,1}$ of Example \ref{example-general-nD}, $\mathfrak h_{4,2}$ and $\mathfrak h_{4,3}$ of Example \ref{example2a}, $\mathcal G_{4,4}'$  of Example \ref{example23}, $\mathcal G_{3,1}\oplus \mathfrak{aff}({\mathbb R})$.  
\end{theorem}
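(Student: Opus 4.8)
The plan is to leverage the structure theorem to recast everything in terms of MASAs, and then split by cyclicity. By Theorem \ref{thm:structure-2-solvable-Frobenius} together with Proposition \ref{lemma:conjugaison-MASAs}, classifying $2$-solvable Frobenius Lie algebras of dimension $2n$ up to isomorphism is the same as classifying, up to conjugacy in $GL(n,\mathbb R)$, the $n$-dimensional MASAs $\mathfrak B\subset\mathfrak{gl}(n,\mathbb R)$ that have an open orbit in $(\mathbb R^n)^*$. Remark \ref{rmq:indecomposable} then lets me restrict to the \emph{indecomposable} such algebras and recover the whole list by forming every direct sum of total dimension at most $8$. Since $\dim\mathcal G\le 8$ forces $n\le 4$, I would exhibit the indecomposable models for $n=1,2,3,4$ and assemble the rest.

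\textbf{The cyclic stratum.} I first dispose of the MASAs of the form $\mathfrak B=\mathbb R[\phi]$ with $\phi$ nonderogatory, equivalently those containing a cyclic element (Lemma \ref{nonderogatorymatrices}). Here Theorem \ref{thm:classification} determines $\mathcal G_\phi$ from the eigenvalue multiplicities alone, as a direct sum of blocks $\mathfrak D_0^{k}$ and $\mathfrak D_{0,1}^{2m}$, and Corollary \ref{corollary1} isolates the indecomposable ones: a unique real eigenvalue yields $\mathfrak D_0^n$, a single complex conjugate pair yields $\mathfrak D_{0,1}^n$. Enumerating the ways to write $n$ as a sum of real multiplicities and complex-pair multiplicities gives one algebra for $n=1$, three for $n=2$, four for $n=3$, and exactly the nine algebras of the form $\mathcal G_M$ for $n=4$ (the only indecomposable ones being $\mathfrak D_0^4$ and $\mathfrak D_{0,1}^4$). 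Comparing the $n=4$ output with \cite{winternitz-zassenhaus} is what reveals that their list lacks the MASA realizing $\mathfrak D_0^2\oplus\mathfrak D_0^2$, the correction recorded in Section \ref{revisit-Winternitz-Zassenhaus}.

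\textbf{The non-cyclic stratum.} It remains to treat the MASAs $\mathfrak B=\mathbb R\mathbb I_{\mathbb R^n}\oplus\mathcal A$ with open orbit in which no element is nonderogatory, so that $\mathcal A$ is an indecomposable MASA of $\mathfrak{sl}(n,\mathbb R)$ of class strictly less than $n$. A short check shows that the NAID (complex-structure) indecomposable MASAs always contain a cyclic element, hence already occur in the cyclic stratum; what is left are the AID MANS of sub-maximal class. For $n\le 3$ the only new contribution is the class $2$ MANS of $\mathfrak{sl}(3,\mathbb R)$, which by Theorem \ref{thm:class2-kravchuk-(n-1)-0-1} forces $\mathcal G\cong\mathcal G_{3,1}$; this completes dimensions $\le 6$, whose full list is the nine algebras displayed. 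For $n=4$, the class $2$ signature $(3,0,1)$ again yields the unique Frobenius algebra $\mathcal G_{4,1}$ (Theorem \ref{thm:class2-kravchuk-(n-1)-0-1}), while the Frobenius MANS of class $3$ are exactly the three indecomposable models $\mathfrak h_{4,2}$, $\mathfrak h_{4,3}$ (Example \ref{example2a}) and $\mathcal G_{4,4}'$ (Example \ref{example23}); adjoining the decomposable $\mathcal G_{3,1}\oplus\mathfrak{aff}(\mathbb R)$ gives the five non-cyclic algebras. Pairwise non-isomorphism of all fourteen is then read off from invariants already computed in the Examples --- the dimension and step of nilpotency of the nilradical, the dimension of its derived ideal, and complete solvability --- while the cyclic and non-cyclic strata cannot overlap since cyclicity is a conjugation invariant, and $\mathfrak h_{4,2}\cong\mathcal G_{4,2}$ is counted only once.

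\textbf{The main obstacle.} The delicate point is the exhaustive, duplicate-free treatment of the class $3$ MANS of $\mathfrak{sl}(4,\mathbb R)$: one must confirm that every such MANS with an open orbit is conjugate to $\mathfrak h_{4,2}$, $\mathfrak h_{4,3}$ or $\mathcal G_{4,4}'$, and that no remaining MASA of $\mathfrak{sl}(4,\mathbb R)$ is Frobenius. This is exactly where the corrected and completed classification of MASAs of $\mathfrak{sl}(4,\mathbb R)$ is needed, and where Lemma \ref{2-nilp-MASA} and the Kravchuk-signature bookkeeping eliminate the non-Frobenius candidates.
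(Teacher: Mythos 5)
Your global architecture --- reduction to $n$-dimensional MASAs with an open orbit in $(\mathbb R^n)^*$ via Theorem \ref{thm:structure-2-solvable-Frobenius} and Proposition \ref{lemma:conjugaison-MASAs}, reduction to indecomposables via Remark \ref{rmq:indecomposable}, enumeration of the cyclic (nonderogatory) stratum through Theorem \ref{thm:classification} and Corollary \ref{corollary1}, and disposal of the class-$2$ MANS through Lemma \ref{2-nilp-MASA} and Theorem \ref{thm:class2-kravchuk-(n-1)-0-1} --- is the same as the paper's, your counting in the cyclic stratum is correct, and your final list of fourteen algebras agrees with Theorem \ref{thm:classification-D6}, including the identification $\mathfrak h_{4,2}\cong\mathcal G_{4,2}$.

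However, at the step you yourself single out as the main obstacle there is a genuine gap, not merely a deferred computation. You propose to settle the class-$3$ MANS of $\mathfrak{sl}(4,\mathbb R)$ (the case $\mathcal A^2\neq 0$, $\mathcal A^3=0$) by invoking the ``corrected and completed classification of MASAs of $\mathfrak{sl}(4,\mathbb R)$.'' Inside this paper that corrected list is Section \ref{revisit-Winternitz-Zassenhaus}, and it is obtained \emph{from} Theorem \ref{thm:classification-D6} by comparison with \cite{winternitz-zassenhaus}; using it as an input is circular. Nor can the published list of \cite{winternitz-zassenhaus} serve as an independent authority: the paper shows it records one MASA incorrectly as a non-commutative family ($\mathcal{Y}_8$) and omits another outright ($\mathcal{Y}_{17}$), so its completeness is exactly what cannot be assumed. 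The paper's actual mechanism, absent from your proposal, is de Graaf's classification (\ref{degraaflist}) of $3$-dimensional nilpotent associative real algebras \cite{degraaf}: Lemma \ref{2-nilp-MASA} forces $\dim \mathrm{Im}(\mathcal A^2)=1$, and $\mathcal A$, viewed as an abstract commutative nilpotent associative algebra with $\mathcal A^2\neq 0$ and $\mathcal A^3=0$, must be isomorphic to $A_{3,2}$, $A_{3,3}^{1}$ or $A_{3,3}^{-1}$ (the commutative types of the correct class); each type is then realized by the explicit MASAs $\mathcal P_{4,2}$, $\mathcal P_{4,3}$, $L_{4,4}'$ of Examples \ref{example2a} and \ref{example23}, yielding $\mathfrak h_{4,2}$, $\mathfrak h_{4,3}$ and $\mathcal G_{4,4}'$. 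A point worth making explicit (it is implicit in the paper) is that for Frobenius MASAs the orbital map identifies $(\mathbb R^4)^*$ with the regular module of $\mathbb R\mathbb I_{\mathbb R^4}\oplus\mathcal A$, so that the abstract associative isomorphism type determines the $GL(4,\mathbb R)$-conjugacy class; this is what makes one matrix representative per de Graaf type sufficient both for exhaustiveness and for the absence of duplicates. Without this ingredient, or an equivalent direct normal-form argument for class-$3$ MANS, your proof of the dimension-$8$ statement is incomplete.
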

\subsection{Proof of Theorem \ref{thm:classification-D6}}
First, let us recall the classification list of nilpotent 3-dimensional associative real algebras $A_{4,j},$ $j=1,2,\dots,6,$ given in \cite{degraaf}. We will need the commutative  ones for  the proof of Theorem \ref{thm:classification-D6}.  They have a basis $(a,b,c)$ in which the (non-zero) products are as follows. 
\begin{eqnarray}\label{degraaflist}  A_{3,1}:  &&(A_{3,1})^2=0,
\nonumber\\
 A_{3,2}: && a^2=c,    
 \nonumber\\ 
  A_{3,3}^s: &&a^2=sc, \;\; b^2=c, \; \;s\in\mathbb R,  \;\; s\neq 0,  
\nonumber\\ 
A_{3,4}^s: && a^2=s c, \; \;b^2=c,\;\; ab=c,  \; \;s\in\mathbb R,  
\nonumber\\
 A_{3,5}:&& ab=c, \;\; ba=-c,  
\nonumber\\
A_{3,6}: &&   
 a^2=b, \;\; ab=ba=c.
\end{eqnarray}
Note that  $A_{3,4}^s$ and  $A_{3,5}$ are not commutative, since $ab\neq ba.$ As noted in \cite{degraaf},   $A_{3,3}^s$ and   $A_{3,3}^t$ are isomorphic if and only if  $t=\epsilon^2 s$ for some $\epsilon\neq 0.$ In our case where the ground field is $\mathbb R,$ there are only two isomorphism classes corresponding to $s >0$, say,  $A_{3,3}^{1}$ and $s<0$, say  $A_{3,3}^{-1}$.

The above being said, let us now dive into the proof of Theorem \ref{thm:cartansubalgebras}. 
 
\noindent
The case $n=1$ is trivial, as every 2-dimensional non-Abelian Lie algebra is isomorphic to $\mathfrak{aff}(\mathbb R) =\mathcal G_\psi$, where $\psi=\mathbb I_{\mathbb R}$, as in Example \ref{ex:D0n}.
 Let $\mathcal G $ be a 2-solvable Frobenius Lie algebra of dimension $2n$, with $2\leq n\leq 3.$ Write $\mathcal G=\mathfrak{B}\ltimes \mathbb R^n$ where $\mathfrak{B}$ is an $n$-dimensional MASA  of $\mathfrak{gl}(n,\mathbb R)$. Note that, when $n\le 3,$ then $[\frac{n^2}{4}]+1=n$, so that from Jacobson's theorem  (\cite{jacobson-schur}), every $n$-dimensional Abelian subalgebra of $\mathfrak{gl}(n,\mathbb R)$ is a MASA. Further set $\mathfrak{B}= \mathbb R \mathbb I_{\mathbb R^n} \oplus L$, where  $L$ is a MASA of $\mathfrak{sl}(n,\mathbb R)$. 
When $n=2,$ then $L=\mathbb R M$, for some nonzero $M\in\mathfrak{sl} (2,\mathbb R)$. But every nonzero $M\in\mathfrak{sl} (2,\mathbb R)$ is nonderogatory. 
Thus, $\mathfrak{B}= \mathbb R [M]$ and  $\mathfrak{B}\ltimes\mathbb R ^2 = \mathcal G_M$.
From Theorem \ref{thm:classification},  $\mathcal G_M$  is isomorphic to $\mathfrak{D}_0^2$, $\mathfrak{aff} (\mathbb C)$, or $\mathfrak{aff}(\mathbb R)\oplus\mathfrak{aff}(\mathbb R)$.
For $n=3,$ using the Lie algebras of lower dimensions listed above, the list of decomposable 2-solvable Frobenius Lie algebras of dimension $6$ simply reads
 $\mathfrak{D}_0^2 \oplus\mathfrak{aff}({\mathbb R})$, 
$\mathfrak{aff}({\mathbb C}) \oplus \mathfrak{aff}({\mathbb R})$, 
$\mathfrak{aff}({\mathbb R})\oplus \mathfrak{aff}({\mathbb R}) \oplus \mathfrak{aff}({\mathbb R})$. As regards the non-decomposable ones, they are all of the form $\mathfrak{g}=(\mathbb R\mathbb I_{\mathbb R^3}\oplus \mathcal A)\ltimes \mathbb R^3$  with $\mathcal A$ a $2$-dimensional AID MASA of $\mathfrak{sl}(3,\mathbb R),$ since $n$ is odd. We have $2$ possibilities. The first is $\mathcal A^2\neq 0$, in which  case $\mathbb R \mathbb I_{\mathbb R^3}\oplus \mathcal A=\mathbb R[M],$ yielding  the only solution $M=M_0$ and $\mathfrak{g}=\mathfrak{D}_0^3.$ 
The second possibility is $\mathcal A^2= 0.$ From Theorem \ref{thm:class2-kravchuk-(n-1)-0-1}, $\mathcal A$  is unique and the corresponding Lie algebra is $\mathbb R\mathbb I_{\mathbb R^3}\oplus \mathcal A=\mathfrak{g}_{3,1},$ as in Example \ref{example-general-nD}. 
For the case $n=4,$ again using the Lie algebras of lower dimensions, we get the following list of decomposable 2-solvable Frobenius Lie algebras of dimension $8$:
$\mathfrak{D}_0^3 \oplus \mathfrak{aff}({\mathbb R}),$
 $\mathfrak{D}_0^2\oplus \mathfrak{D}_0^2$, 
 $\mathfrak{D}_0^2\oplus \mathfrak{aff}({\mathbb C})$, 
 $\mathfrak{D}_0^2\oplus \mathfrak{aff}({\mathbb R}) \oplus \mathfrak{aff}({\mathbb R})$,
 $\mathfrak{aff}({\mathbb C})\oplus \mathfrak{aff}({\mathbb C})$,
$\mathfrak{aff}({\mathbb C})\oplus \mathfrak{aff}({\mathbb R}) \oplus \mathfrak{aff}({\mathbb R})$,
$\mathfrak{aff}({\mathbb R}) \oplus \mathfrak{aff}({\mathbb R})\oplus \mathfrak{aff}({\mathbb R}) \oplus \mathfrak{aff}({\mathbb R})$ and  $\mathcal G_{3,1}\oplus \mathfrak{aff}({\mathbb R}).$
We put the non-decomposable ones in the form $\mathfrak{g}=(\mathbb R\mathbb I_{\mathbb R^4}\oplus \mathcal A)\ltimes \mathbb R^4$  where $\mathcal A$ is either an AID or NAID $3$-dimensional MASA of $\mathfrak{sl}(4,\mathbb R).$ 
First,  in the case where $\mathcal A$ is a  NAID  MASA of $\mathfrak{sl}(4,\mathbb R),$ we write $\mathcal A = \mathbb RM_s\oplus \mathcal A'$, where $\mathcal A'$ a $2$-dimensional MANS of $\mathfrak{sl}(2,\mathbb C).$  Using a direct approach and writing elements of  $\mathcal A'$ as upper triangular $4\times 4$ matrices commuting with $M_s$, one easily sees that $\mathcal A'=\{m_1(E_{1,3}+E_{2,4})+m_2(E_{2,3}-E_{1,4}), m_1,m_2\in\mathbb R\}$. This leads to  $\mathbb R\mathbb I_{\mathbb R^4}\oplus \mathcal A=\mathbb R[M_{0,1}]$ and 
$(\mathbb R\mathbb I_{\mathbb R^4}\oplus \mathcal A)\ltimes \mathbb R^4=\mathfrak{D}_{0,1}^4,$ as in Example \ref{ex:D01n}. Note that using lower triangular matrices leads to $\mathbb R[(M_{0,1})^T]$ which is conjugate to $\mathbb R[M_{0,1}]$.
Now in the case  where $\mathcal A$ is an AID, 
there are $3$ possibilities. (A) $\mathcal A^3\neq 0$ and $\mathcal A^4= 0$, there exists $a\in\mathcal A$ such that $a^3\neq 0$ and $a^4=0,$ so that $\mathbb R\mathbb I_{\mathbb R^4}\oplus \mathcal A=\mathbb R[a].$ 
From Theorem \ref{thm:classification},  we have $\mathbb R\mathbb I_{\mathbb R^4}\oplus \mathcal A=\mathbb R[M_0]$ and  $(\mathbb R\mathbb I_{\mathbb R^4}\oplus \mathcal A)\ltimes \mathbb R^4=\mathfrak{D}_{0}^4$. This corresponds to the case $A_{3,6}$ in \cite{degraaf}, as $A_{3,6}=span(a, a^2=b, a^3=c).$
(B) The second possibility is $\mathcal A^3=0$ and $\mathcal A^2\neq 0.$ 
 From Lemma \ref{2-nilp-MASA}, we have $\dim Im(\mathcal A^2)=1.$ This splits into 3 cases (\ref{degraaflist}):  
(a)  $A_{3,2}$: with $(A_{3,2})^2=\mathbb R c$.
 A representative of this class is $\mathcal{P}_{4,2}$ as in Example \ref{example2a} with $a=E_{1,2}+E_{2,4}$, $b=E_{1,3}$, $c=E_{1,4}$, so the corresponding Lie algebra is   $(\mathbb R\mathbb I_{\mathbb R^4}\oplus A_{3,2})\ltimes \mathbb R^4=\mathfrak{h}_{4,2}$.  
(b)  $A_{3,3}^1$, with  $(A_{3,3}^1)^2=\mathbb Rc.$  
 A representative of this class is $\mathcal{P}_{4,3}$ as in Example \ref{example2a} with $a=E_{1,2}+E_{2,4}$, $b=E_{1,3}+E_{3,4}$, $c=E_{1,4}$, and   $(\mathbb R\mathbb I_{\mathbb R^4}\oplus A_{3,3}^1)\ltimes \mathbb R^4=\mathfrak{h}_{4,3}$.  
(c)  $A_{3,3}^{-1}$ :  
A representative of this class is $L_{4,4}'$ as in Example \ref{example23} with $a=E_{1,2}+E_{3,4}-E_{1,3}-E_{2,4},$
$b=E_{1,2}+E_{3,4}+E_{1,3}+E_{2,4},$  $c=2 E_{1,4}$,  so that   $(\mathbb R\mathbb I_{\mathbb R^4}\oplus A_{3,3}^{-1})\ltimes \mathbb R^4=\G_{4,4}'$.  
(C) The last case is given by the condition $\mathcal A^2=0$, which, from Theorem \ref{thm:class2-kravchuk-(n-1)-0-1}, provides a unique $\mathcal A$ and $(\mathbb R\mathbb I_{\mathbb R^4}\oplus \mathcal A)\ltimes\mathbb R^4=\mathfrak{g}_{4,1},$ as in Example \ref{example-general-nD}. Note that this corresponds to  $\mathcal A=A_{3,1}$ in \cite{degraaf}. 
 \qed 
\subsection{A discussion on the lists  by  Winternitz and  Zassenhaus }\label{revisit-Winternitz-Zassenhaus}  

We revisit the classification lists of MASAs of $\mathfrak{sl}(n,\mathbb R)$, $n=3,4$ provided in  \cite{winternitz} and  \cite{winternitz-zassenhaus}. A systematic comparison shows a match with our classification, except for one missing item which we complete and correct some misprint on another item.

\noindent
For $n=3,$ according to \cite{winternitz}, there are  six classes of non-mutually conjugate MASAs   of $\mathfrak{sl}(3,\mathbb R)$. With the same notation as in  \cite{winternitz}, we denote them by  $L_{2,i}$, $i=1,\dots,6.$

\noindent
(1) The first is  $L_{2,1}:=\{  diag(k_1+k_2, -k_1+k_2,-2k_2), \;k_1,k_2\in\mathbb R\}.$ 

\noindent
We note that  $L_{2,1}:=\{  (k_2-k_1)S_{2,1}^0 +\frac{1}{2}(k_1+3k_2) S_{2,1} +\frac{3}{2}(k_1-k_2) S_{2,1}^2, \;k_1,k_2\in\mathbb R\}$$\subset  \mathbb R[ S_{2,1}],$ where $S_{2,1}=diag(1,0,-1),$  
 and  $ \mathbb R \mathbb I_{\mathbb R^n} \oplus L_{2,1}= \mathbb R[ S_{2,1}].$  So  $(\mathbb R \mathbb I_{\mathbb R^n} \oplus L_{2,1})\ltimes \mathbb R^3$ is isomorphic to $ \mathfrak{aff}(\mathbb R)\oplus \mathfrak{aff}(\mathbb R)\oplus \mathfrak{aff}(\mathbb R)$, as   
 $S_{2,1}$ is nonderogatory with $3$ eigenvalues.

\noindent
(2) In $L_{2,2}:=\Big\{k_1(E_{1,1}+E_{2,2}-2E_{3,3}) +k_2(E_{1,2}-E_{2,1}), k_1, k_2\in\mathbb R \Big\},$ every element  
  is a nonderogatory matrix with one real eigenvalue $-2k_1$ and two complex conjugate eigenvalues $k_1+ik_2$ and $k_1-ik_2,$ except when $k_2=0.$ 
For instance, 
$ S_{2,2}:=E_{1,2}-E_{2,1}\in L_{2,2}$ is nonderogatory,   
so  $\mathbb R \mathbb I_{\mathbb R^3} \oplus L_{2,2} =\mathbb R[ S_{2,2}].$ From Theorem \ref{thm:classification}, 
$(\mathbb R \mathbb I_{\mathbb R^3} \oplus L_{2,2})\ltimes \mathbb R^3$ is isomorphic to $ \mathfrak{aff}(\mathbb C)\oplus \mathfrak{aff}(\mathbb R).$

\noindent
(3) In $L_{2,3}:=\Big\{k_1(E_{1,1}+E_{2,2}-2E_{3,3})+k_2E_{1,2}, k_1,k_2\in\mathbb R \Big\}$, each element
  is of the form $k_1S_{2,3}^0+ k_2 S_{2,3} - (3k_1+k_2)S_{2,3}^2$ where $S_{2,3}:=E_{1,2}+E_{3,3}$ is nonderogatory 
with a double eigenvalue $0$ and a simple eigenvalue 1. Hence,  $\mathbb R \mathbb I_{\mathbb R^n} \oplus L_{2,3}=\mathbb R[ S_{2,3}]$ and
 $(\mathbb R \mathbb I_{\mathbb R^n} \oplus L_{2,3})\ltimes \mathbb R^3 = \mathcal G_{ S_{2,3} }$ is isomorphic to $\mathfrak{aff}(\mathbb R)\oplus \mathfrak{D}_0^2$ (Theorem \ref{thm:classification}).    

\noindent
(4) As regards  $L_{2,5}:=\Big\{k_1E_{1,2}+k_2E_{1,3}, k_1,k_2\in\mathbb R\Big\}$, 
 the algebra $\mathbb R\mathbb I_{\mathbb R^3}\oplus L_{2,5}$
 is the same as $\mathfrak{B}_{3,1}$ in Example \ref{example-general-nD}  and $\mathfrak{B}_{3,1}\ltimes \mathbb R^3=\mathcal G_{3,1}$ is indecomposable. 

\noindent
(5)  $L_{2,6}:=\Big\{k_1(E_{1,2}+E_{2,3})+k_2E_{1,3}, k_1,k_2\in\mathbb R\Big\}$ is such that    $\mathbb R\mathbb I_{\mathbb R ^3}\oplus L_{2,6}=\mathbb R[M_0]$, where 
 the nonderogatory matrix  $M_{0}=E_{1,2}+E_{2,3}$ has
   $0$  as a unique eigenvalue of multiplicity 3. Thus,   $(\mathbb R\mathbb I_{\mathbb R ^3}\oplus L_{2,6})\ltimes \mathbb R^3 =\mathfrak{D}_0^3,$  (Theorem \ref{thm:classification}).  

\noindent
(6) For the remaining algebra  
 $L_{2,4}:=\Big\{k_{1,3}E_{1,3}+k_{2,3}E_{2,3},k _{1,3},k_{2,3}\in\mathbb R\Big\}$
   in the list in  \cite{winternitz},  it is easy to see that  $(\mathbb R \mathbb I_{\mathbb R^3} \oplus L_{2,4}) \ltimes \mathbb R^3$ is not a Frobenius Lie algebra, as  every linear form $\alpha$ satisfies $(\partial\alpha)^3=0.$  See also Remark \ref{rem:masa-no-frobenius}.

\noindent
For  $n=4$, we  treat the pair-wise non-conjugate $16$ MASAs of $\mathfrak{sl}(4,\mathbb R)$ in the order in which they appear in the classification  list  in \cite{winternitz-zassenhaus} and name them $\mathcal{Y}_j$, $j=1,\dots,16.$ We correct the algebra $\mathcal{Y}_8$  from its original expression in   \cite{winternitz-zassenhaus} which was not commutative and complete the list of MASAs from which the MASA  $\mathcal{Y}_{17}$ giving rise to $\mathfrak{D}_0^2\oplus \mathfrak{D}_0^2$ is missing.
\begin{enumerate}
 \item  $\mathcal{Y}_1:=\{k_{1,3}E_{1,3}+k_{1,4}E_{1,4}+k_{2,3}E_{2,3}+k_{2,4}E_{2,4}$, $k_{1,3},k_{1,4},k_{2,3},k_{2,4}\in\mathbb R\}$,  is $4$-dimensional, thus it is not relevant to our study.
 \item $\mathcal{Y}_2:=\{k_{1,2}E_{1,2}+k_{1,3}E_{1,3}+k_{1,4}E_{1,4}$, $k_{1,2},k_{1,3},k_{1,4}\in\mathbb R\}$ is the algebra $L_{4,1}$ in Example  \ref{example-general-nD}, so the $2$-solvable Frobenius Lie algebra $(\mathbb I_{\mathbb R^4}\oplus\mathcal{Y}_2)\ltimes \mathbb R^4$ is isomorphic to $\mathcal G_{4,1}.$
 \item   $\mathcal{Y}_3:=\{k_{1,4}E_{1,4}+k_{2,4}E_{2,4}+k_{3,4}E_{3,4}$, $k_{1,4},k_{2,4},k_{3,4}\in\mathbb R\}$ is the algebra $L_n$ in Remark \ref{rem:masa-no-frobenius}, when $n=4$. So $(\mathbb R \mathbb I_{\mathbb R^4}\oplus\mathcal{Y}_3)\ltimes \mathbb R^4$ is not a Frobenius Lie algebra.
\item  $\mathcal{Y}_4:=\{k_{1,3}(E_{1,3}+E_{3,4})+k_{1,4}E_{1,4}+k_{2,4}E_{2,4}$, $k_{1,3},k_{1,4},k_{2,4}\in\mathbb R\}$  yields a 2-solvable Lie algebra which is not a Frobenius Lie algebra. Indeed, if we set $e_1:=\mathbb I_{\mathbb R^4},$ $e_2:=E_{1,3}+E_{3,4},$ $e_3:=E_{1,4},$ $e_4=E_{2,4}, $
 the Lie bracket of $(\mathbb I_{\mathbb R^4}\oplus \mathcal{Y}_4)\ltimes \mathbb R^4$ is $[e_1,\tilde e_j] =\tilde e_j,$ $j=1,\dots,4,$ $[e_2,\tilde e_3]=\tilde e_1,$
$[e_2,\tilde e_4]=\tilde e_3,$ $[e_3,\tilde e_4]=\tilde e_1,$ $[e_4,\tilde e_4]=\tilde e_2.$

\noindent
Any linear form
 $\alpha=k_1e_1^*+k_2e_2^*+k_3e_3^*+k_4e_4^*+s_1\tilde e_1^*+s_2\tilde e_2^*+s_3\tilde e_3^*+s_4\tilde e_4^*$ satisfies 
$(\partial\alpha)^4=0$, as we have $\partial\alpha= -e_1^*\wedge\eta_1-e_2^*\wedge \eta_2 -\eta_3\wedge\tilde  e_4^*$, where 
$\eta_1^*:=s_1\tilde e_1^*+s_2\tilde e_2^*+s_3\tilde e_3^*+s_4\tilde e_4^*,$ $\eta_2:=s_1\tilde e_3^*+s_3\tilde e_4^*,$ $\eta_3:=s_1e_3^*+s_2e_4^*.$
 \item $\mathcal{Y}_5:=\{k_{1,2}(E_{1,2}+E_{2,4})+k_{1,3}E_{1,3}+k_{1,4}E_{1,4},k_{1,2},k_{1,3},k_{1,4}\in\mathbb R\}$ is the MASA $\mathcal{P}_{4,2}$ of $\mathfrak{sl}(4,\mathbb R)$ given in Example \ref{example2a}. So 
$(\mathbb I_{\mathbb R^4}\oplus \mathcal{Y}_5)\ltimes \mathbb R^4$ is  the $2$-solvable Frobenius Lie algebra $\mathfrak{h}_{4,2}$ in Example \ref{example2a}.
 \item  $\mathcal{Y}_{6,\varepsilon}:=\{k_{1,2}(E_{1,2}+E_{2,4})+k_{1,3}(E_{1,3}+\varepsilon E_{3,4})+k_{1,4}E_{1,4}, \; k_{1,2},k_{1,3},k_{1,4}\in\mathbb R\}$, $\varepsilon=\pm 1$, yields a 2-solvable Frobenius Lie algebra $(\mathbb I_{\mathbb R^4}\oplus \mathcal{Y}_{6,\varepsilon})\ltimes \mathbb R^4$ 
 with the following Lie brackets, in the basis $e_1:=\mathbb I_{\mathbb R^4},$ $e_2:=E_{1,2}+E_{2,4},$ $e_3:=E_{1,3}+\varepsilon E_{3,4},$ $e_4=E_{1,4}, $ $\tilde e_j,$ $1\leq j\leq 4:$
 $[e_1, \tilde e_j] =\tilde e_j,$ $j=1,\dots,4,$ $[e_2,\tilde e_2]=\tilde e_1,$
$[e_2,\tilde e_4]=\tilde e_2,$
$[e_3,\tilde e_3]=\tilde e_1,$ $[e_3,\tilde e_4]=\varepsilon \tilde e_3,$ $[e_4,\tilde e_4]=\tilde e_1.$
Furthermore  
$\partial\tilde e_1^*= -e_1^*\wedge\tilde e_1^* -  e_2^*\wedge\tilde e_2^* - e_3^*\wedge\tilde e_3^* - e_4^*\wedge\tilde e_4^*$ is nondegenerate.
 Note that $\mathcal{Y}_{6,1}$ coincides with $\mathcal{P}_{4,3}$ in Example \ref{example2a}.  
So 
$(\mathbb I_{\mathbb R^4}\oplus \mathcal{Y}_{6,1})\ltimes \mathbb R^4$ is  the same as $\mathfrak{h}_{4,3}$ in Example \ref{example2a}.
When $\varepsilon=-1$, the linear map defined by $\psi: (\mathbb I_{\mathbb R^4}\oplus \mathcal{Y}_{6,-1})\ltimes \mathbb R^4 \to\G_{4,4}',$ 
$\psi(e_1)=e_1',$ 
$\psi(e_2)=e_2'-e_3',$  $\psi(e_3)=-e_2'-e_3',$ $\psi(e_4)=-2e_4',$
$\psi(\tilde e_2)=-\tilde e_2'+ \tilde e_3',$ $\psi(\tilde e_3)=\tilde e_2'+ \tilde e_3',$ $\psi(\tilde e_1)=-4\tilde e_1',$
is a Lie algebra isomorphism, where $\G_{4,4}'$ is as in Example \ref{example23}. So $\mathcal{Y}_{6,-1}$ is conjugate to $L_{4,4}'.$
 \item $\mathcal{Y}_7:=\{k_{1,2}(E_{1,2}+E_{2,3}+E_{3,4})+k_{1,3}(E_{1,3}+E_{2,4})+k_{1,4}E_{1,4}, \; k_{1,2},k_{1,3},k_{1,4}\in\mathbb R\}$ satisfies
  $\mathbb I_{\mathbb R^4}\oplus \mathcal{Y}_{7}=\mathbb R[M_{0}],$  with $M_{0}=E_{1,2}+E_{2,3}+E_{3,4}.$ So $(\mathbb I_{\mathbb R^4}\oplus \mathcal{Y}_{7})\ltimes \mathbb R^4=\mathfrak{D}_{0}^4$.
\item  $\mathcal{Y}_8:=\{k_{1,2}(E_{1,2}-E_{2,1}+E_{3,4}-E_{4,3})+k_{1,3}(E_{1,3}+E_{2,4})+k_{1,4}(E_{1,4}-E_{2,3}),$ $ k_{1,2},k_{1,3},k_{1,4}\in\mathbb R\}$ is such that $\mathbb I_{\mathbb R^4}\oplus \mathcal{Y}_{8}=\mathbb R[M_{0,1}]$, where $M_{0,1}=
E_{1,2}-E_{2,1}+E_{3,4}-E_{4,3}+E_{1,3}+E_{2,4}$. Thus $(\mathbb I_{\mathbb R^4}\oplus \mathcal{Y}_{8})\ltimes \mathbb R^4=\mathfrak{D}_{0,1}^4$. Note that, in the original  list in \cite{winternitz-zassenhaus}, this item was listed with some mistakes as $\{k_{1,2}(E_{1,2}-E_{2,1}+E_{2,4}-E_{4,3})+k_{1,3}(E_{1,3}+E_{2,4})+k_{1,4}E_{1,4}, $ $ k_{1,2},k_{1,3},k_{1,4}\in\mathbb R\}$ which is not commutative.
\item  $\mathcal{Y}_9:=\{k_{1,1}(E_{1,1}+E_{2,2}+E_{3,3}-3E_{4,4})+k_{1,2}E_{1,2}+k_{1,3}E_{1,3},$ $ k_{1,1},k_{1,2},k_{1,3}\in\mathbb R\}$, consider the basis 
$e_1=\mathbb I_{\mathbb R^n},$ $e_2=E_{1,1}+E_{2,2}+E_{3,3}-3E_{4,4},$ $e_3=E_{1,2},$ $e_4:=E_{1,3}$ of $\mathbb I_{\mathbb R^4}\oplus\mathcal{Y}_9$
and change it into
$e_1':=\frac{1}{4}(3e_1+e_2),$
$e_2'=e_3,$ 
$e_3'=e_4,$
$e_4':=\frac{1}{4}(e_1-e_2).$ The Lie bracket of $(\mathbb I_{\mathbb R^4}\oplus\mathcal{Y}_9)\ltimes \mathbb R^4$ reads
$[e_1',\tilde e_1]=\tilde e_1,$ $[e_1',\tilde e_2]=\tilde e_2,$ $[e_1',\tilde e_3]=\tilde e_3,$
$[e_2',\tilde e_2]=\tilde e_1,$
$[e_3,\tilde e_3]=\tilde e_1,$ 
$[e_4',\tilde e_4]=\tilde e_4.$ 
That is clearly the Lie bracket of $\mathcal G_{3,1}\oplus\mathfrak{aff}(\mathbb R)$, where
 $\mathcal G_{3,1}=span(e_1',e_2',e_3',\tilde e_1,\tilde e_2,\tilde e_3)$ corresponds to $n=3, p=1$ in Example  \ref{example-general-nD}.
\item  $\mathcal{Y}_{10}:=\{k_{1,1}(E_{1,1}+E_{2,2}+E_{3,3}-3E_{4,4})+k_{1,3}E_{1,3}+k_{2,3}E_{2,3}, \; k_{1,1},k_{1,3},k_{2,3}\in\mathbb R\}$, a basis of $\mathbb I_{\mathbb R^4}\oplus\mathcal{Y}_{10}$ is
$e_1=\mathbb I_{\mathbb R^4},$ $e_2=E_{1,1}+E_{2,2}+E_{3,3}-3E_{4,4},$ $e_3=E_{1,3},$ $e_4:=E_{2,3}$
which we change into 
$e_1':=\frac{1}{4}(3e_1+e_2),$
$e_2'=e_3,$ 
$e_3'=e_4,$
$e_4':=\frac{1}{4}(e_1-e_2).$
The Lie bracket of $(\mathbb I_{\mathbb R^4}\oplus\mathcal{Y}_{10})\ltimes \mathbb R^4$ reads
$[e_1',\tilde e_1]=\tilde e_1,$ $[e_1',\tilde e_2]=\tilde e_2,$ $[e_1',\tilde e_3]=\tilde e_3,$
$[e_2',\tilde e_3]=\tilde e_1,$
$[e_3',\tilde e_3]=\tilde e_2,$ 
$[e_4',\tilde e_4]=\tilde e_4,$ 
which is the same as that of $(\mathfrak{B}_{3}\ltimes\mathbb R^3)\oplus\mathfrak{aff}(\mathbb R)$, where $\mathfrak{B}_{3}=$span($e_1',e_2',e_3')$ is as in Remark  \ref{rem:masa-no-frobenius}, when $n=3$ and   $\mathbb{R}^{3}=$span($\tilde e_1,\tilde e_2,\tilde e_3)$.  As  $\mathfrak{B}_{3}\ltimes\mathbb R^3$ is not a Frobenius Lie algebra (Remark  \ref{rem:masa-no-frobenius}), neither is  
 $(\mathbb I_{\mathbb R^4}\oplus\mathcal{Y}_{10})\ltimes \mathbb R^4$.

\item $\mathcal{Y}_{11}:=\{k_{1,1}(E_{1,1}+E_{2,2}+E_{3,3}-3E_{4,4})+k_{1,2}(E_{1,2}+E_{2,3})+k_{1,3}E_{1,3},$ $ k_{1,1},$ $k_{1,2},k_{1,3}\in\mathbb R\}$, a basis of $\mathbb R\mathbb I_{\mathbb R^n}\oplus \mathcal{Y}_{11}$ is
$e_1=\mathbb I_{\mathbb R^n},$ $e_2=E_{1,1}+E_{2,2}+E_{3,3}-3E_{4,4},$ $e_3=E_{1,2}+E_{2,3},$ $e_4:=E_{1,3} = e_2^2$,
which we change into 
$e_1':=\frac{1}{4}(3e_1+e_2),$
$e_2'=e_3,$ 
 $e_3'=e_4,$
$e_4':=\frac{1}{4}(e_1-e_2),$
so that the Lie bracket of $(\mathbb R\mathbb I_{\mathbb R^n}\oplus \mathcal{Y}_{11})\ltimes \mathbb R^4$ reads
$[e_1',\tilde e_1]=\tilde e_1,$ $[e_1',\tilde e_2]=\tilde e_2,$ $[e_1',\tilde e_3]=\tilde e_3,$
$[e_2',\tilde e_2]=\tilde e_1,$
$[e_2',\tilde e_3]=\tilde e_2,$ 
$[e_3',\tilde e_3]=\tilde e_1,$
$[e_4',\tilde e_4]=\tilde e_4.$  
This is  the Lie bracket of $\mathfrak{D}_{0}^3\oplus \mathfrak{aff}(\mathbb R)$, where  $\mathfrak{D}_{0}^3=span(e_1',e_2',e_3',\tilde e_1,\tilde e_2,\tilde e_3)=\mathbb R[e_2']\ltimes \mathbb R^3$. 
\item $\mathcal{Y}_{12}:=\{k_{1,1}(E_{1,1}+E_{2,2}-E_{3,3}-E_{4,4})+k_{1,2}(E_{1,2}-E_{2,1})+k_{3,4}(E_{3,4}-E_{4,3}),  $ $k_{1,1}, k_{1,2},$ $k_{3,4}\in\mathbb R\}$, a basis of $\mathbb R\mathbb I_{\mathbb R^4}\oplus \mathcal{Y}_{12}$ is
$e_1=\mathbb I_{\mathbb R^n},$ $e_2=E_{1,1}+E_{2,2}-E_{3,3}-E_{4,4},$ $e_3=E_{1,2} - E_{2,1},$ $e_4:=E_{3,4}-E_{4,3},$
 we change it into 
$e_1'=\frac{1}{2}(e_1+e_2),$ $e_2'=e_3,$  $e_3'=\frac{1}{2}(e_1-e_2),$ $e_4'=e_4.$
The Lie bracket of $(\mathbb R\mathbb I_{\mathbb R^n}\oplus \mathcal{Y}_{12})\ltimes \mathbb R^4$ is given by
$[e_1',\tilde e_1]=\tilde e_1,$ $[e_1',\tilde e_2]=\tilde e_2,$
 $[e_2',\tilde e_1]=-\tilde e_2,$
$[e_2',\tilde e_2]=\tilde e_1,$
$[e_3',\tilde e_3]=\tilde e_3,$ 
$[e_3',\tilde e_4]=\tilde e_4,$
$[e_4',\tilde e_3]=-\tilde e_4,$
$[e_4',\tilde e_4]=\tilde e_3.$    
This is  the direct sum $\mathcal I_1\oplus \mathcal I_2$ of two ideals $\mathcal I_1=span(e_1',e_2',\tilde e_1,\tilde e_2)$ and $\mathcal I_2=span(e_3',e_4',\tilde e_3,\tilde e_4).$  Both $\mathcal I_1$ and $\mathcal I_2$ are isomorphic to   $ \mathfrak{aff}(\mathbb C).$ So  $(\mathbb R\mathbb I_{\mathbb R^4}\oplus \mathcal{Y}_{12})\ltimes \mathbb R^4$ is isomorphic to  $ \mathfrak{aff}(\mathbb C) \oplus \mathfrak{aff}(\mathbb C).$

\item $\mathcal{Y}_{13}:=\{k_{1,1}(E_{1,1}+E_{2,2}-E_{3,3}-E_{4,4})+k_{1,2}(E_{1,2}-E_{2,1})+k_{3,4}E_{3,4},$ $ k_{1,1},k_{1,2},$ $k_{3,4}\in\mathbb R\}$, consider the following basis of $\mathbb R\mathbb I_{\mathbb R^4}\oplus \mathcal{Y}_{13}$:
$e_1=\mathbb I_{\mathbb R^n},$ $e_2=E_{1,1}+E_{2,2}-E_{3,3}-E_{4,4},$ 
%
 $e_3=E_{1,2} - E_{2,1},$ $e_4:=E_{3,4}$. In the new basis 
$e_1'=\frac{1}{2}(e_1+e_2),$ $e_2'=e_3,$  $e_3'=\frac{1}{2}(e_1-e_2),$ $e_4'=e_4,$  we get the following  Lie bracket of $(\mathbb R\mathbb I_{\mathbb R^4}\oplus \mathcal{Y}_{13})\ltimes \mathbb R^4$:
$[e_1',\tilde e_1]=\tilde e_1,$ $[e_1',\tilde e_2]=\tilde e_2,$
 $[e_2',\tilde e_1]=-\tilde e_2,$
$[e_2',\tilde e_2]=\tilde e_1,$
$[e_3',\tilde e_3]=\tilde e_3,$ 
$[e_3',\tilde e_4]=\tilde e_4,$
$[e_4',\tilde e_4]=\tilde e_3.$    
This is the Lie bracket of $ \mathfrak{aff}(\mathbb C) \oplus \mathfrak{D}_{0}^2,$ with $\mathfrak{aff}(\mathbb C) =span(e_1',e_2',\tilde e_1,\tilde e_2)$ and 
$\mathfrak{D}_{0}^2 =span(e_3',e_4',\tilde e_3,\tilde e_4).$
\item $\mathcal{Y}_{14}:=\{ k_{1,1}(E_{1,1}+E_{2,2}-E_{3,3}-E_{4,4})+k_{1,2}(E_{1,2}-E_{2,1})+k_{3,3}(E_{3,3}-E_{3,4}), $ $k_{1,1},$ $k_{1,2},$ $k_{3,3}\in\mathbb R\}$, consider the following basis of $\mathbb R\mathbb I_{\mathbb R^4}\oplus \mathcal{Y}_{14}$:
$e_1=\mathbb I_{\mathbb R^n},$ $e_2=E_{1,1}+E_{2,2}-E_{3,3}-E_{4,4},$ $e_3=E_{1,2} - E_{2,1},$ $e_4:=E_{3,3}-E_{4,4}$.  In the basis
$e_1'=\frac{1}{2}(e_1+e_2),$ $e_2'=e_3,$ 
 $e_3'=\frac{1}{4}(e_1-e_2+2e_4),$ $e_4'=\frac{1}{4}(e_1-e_2-2e_4),$  $\tilde e_j$, $j=1,2,3,4,$ the  Lie bracket of $(\mathbb R\mathbb I_{\mathbb R^4}\oplus \mathcal{Y}_{14})\ltimes \mathbb R^4$ is:
$[e_1',\tilde e_1]=\tilde e_1,$ $[e_1',\tilde e_2]=\tilde e_2,$
 $[e_2',\tilde e_1]=-\tilde e_2,$
$[e_2',\tilde e_2]=\tilde e_1,$
$[e_3',\tilde e_3]=\tilde e_3,$ 
$[e_4',\tilde e_4]=\tilde e_4.$    
This is the Lie bracket of $ \mathfrak{aff}(\mathbb C) \oplus\mathfrak{aff}(\mathbb R) \oplus \mathfrak{aff}(\mathbb R) ,$ where  $ \mathfrak{aff}(\mathbb C)=$span($e_1',e_2',\tilde e_1,\tilde e_2$) and the two copies of  $ \mathfrak{aff}(\mathbb R)$ are span($e_3',\tilde e_3$), span($e_4',\tilde e_4$).
 \item $\mathcal{Y}_{15}:=\{ k_{1,1}(E_{1,1}+E_{2,2}-E_{3,3}-E_{4,4})+k_{1,2}E_{1,2}+k_{3,3}(E_{3,3}-E_{4,4}), \; k_{1,1},k_{1,2},k_{3,3}\in\mathbb R\}$, change the basis 
$e_1=\mathbb I_{\mathbb R^n},$ $e_2=E_{1,1}+E_{2,2}-E_{3,3}-E_{4,4},$ $e_3=E_{1,2},$ $e_4=E_{3,3}-E_{4,4}$ of $\mathbb R\mathbb I_{\mathbb R^4}\oplus \mathcal{Y}_{15}$, 
 into
$e_1'=\frac{1}{2}(e_1+e_2),$ $e_2'=e_3,$ 
 $e_3'=\frac{1}{4}(e_1-e_2+2e_4),$ $e_4'=\frac{1}{4}(e_1-e_2-2e_4).$  The  Lie bracket of $(\mathbb R\mathbb I_{\mathbb R^4}\oplus \mathcal{Y}_{15})\ltimes \mathbb R^4$ is 
$[e_1',\tilde e_1]=\tilde e_1,$ $[e_1',\tilde e_2]=\tilde e_2,$
$[e_2',\tilde e_2]=\tilde e_1,$
$[e_3',\tilde e_3]=\tilde e_3,$ 
$[e_4',\tilde e_4]=\tilde e_4.$    
This is the Lie bracket of  $ \mathfrak{D}_0^2 \oplus\mathfrak{aff}(\mathbb R) \oplus \mathfrak{aff}(\mathbb R) ,$ where $ \mathfrak{D}_0^2=$span($e_1',e_2,\tilde e_1,\tilde e_2$).
\item $\mathcal{Y}_{16}:=\{k_{1}(E_{1,1}+E_{2,2}+E_{3,3}-3E_{4,4})+k_{2}(E_{1,1}+E_{2,2}-2E_{3,3})+k_{3}(E_{1,1}$$-E_{2,2}) $, $ k_{1},$$k_{2},$ $k_{3}\in\mathbb R\}$, we consider the basis 
$e_1=\mathbb I_{\mathbb R^n},$ $e_2=E_{1,1}+E_{2,2}+E_{3,3}-3E_{4,4},$ $e_3=E_{1,1}+E_{2,2}-2E_{3,3},$ $e_4:=E_{1,1}-E_{2,2}$ of $\mathbb R\mathbb I_{\mathbb R^4}\oplus \mathcal{Y}_{16}$, then change it to 
$e_1'=\frac{1}{12}(3e_1+e_2+2e_3+6e_4),$ 
$e_2'=\frac{1}{12}(3e_1+e_2+2e_3-6e_4),$
 $e_3'=\frac{1}{4}(3e_1+e_2-4e_3),$ 
$e_4'=\frac{1}{4}(e_1-e_2).$  The Lie bracket of $(\mathbb R\mathbb I_{\mathbb R^4}\oplus \mathcal{Y}_{16})\ltimes \mathbb R^4$ is:
$[e_1',\tilde e_1]=\tilde e_1,$ 
$[e_2',\tilde e_2]=\tilde e_2,$
$[e_3',\tilde e_3]=\tilde e_3,$ 
$[e_4',\tilde e_4]=\tilde e_4.$    
This is the Lie bracket of $\mathfrak{aff}(\mathbb R) \oplus \mathfrak{aff}(\mathbb R)\oplus\mathfrak{aff}(\mathbb R) \oplus \mathfrak{aff}(\mathbb R)$, where the copies of  $\mathfrak{aff}(\mathbb R)$ are $span(e_j',\tilde e_j)$, $j=1,2,3,4,$ respectively.
\item $\mathcal{Y}_{17}:=\{k_{1,1}(E_{1,1}+E_{2,2}-E_{3,3}-E_{4,4})+k_{1,2}E_{1,2}+k_{3,4}E_{3,4}, \; k_{1,1},k_{1,2},k_{3,4}\in\mathbb R\}$. Consider the basis 
$e_1=\mathbb I_{\mathbb R^n},$ $e_2=E_{1,1}+E_{2,2}-E_{3,3}-E_{4,4},$ $e_3=E_{1,2},$ $e_4:=E_{3,4}$ of $\mathbb R\mathbb I_{\mathbb R^4}\oplus \mathcal{Y}_{17}$ and further  change it into 
$e_1'=\frac{1}{2}(e_1+e_2),$  $e_2'=e_3,$ $e_3'=\frac{1}{2}(e_1-e_2),$ $e_4'=e_4.$  The  Lie bracket of $(\mathbb R\mathbb I_{\mathbb R^4}\oplus \mathcal{Y}_{17})\ltimes \mathbb R^4$ reads:
$[e_1',\tilde e_1]=\tilde e_1,$ 
$[e_1',\tilde e_2]=\tilde e_2,$
$[e_2',\tilde e_2]=\tilde e_1,$ 
$[e_3',\tilde e_3]=\tilde e_3,$
$[e_3',\tilde e_4]=\tilde e_4,$ 
$[e_4',\tilde e_4]=\tilde e_3.$    
This is the Lie bracket of the direct sum $\mathfrak{D}_0^2 \oplus \mathfrak{D}_0^2$, where the copies of  $\mathfrak{D}_0^2$ are span$(e_1',e_2',\tilde e_1,\tilde e_2)$ and $span(e_3',e_4',\tilde e_3,\tilde e_4)$. The present item is missing in the list in \cite{winternitz-zassenhaus}.
\end{enumerate}


\section{Appendix: Jordan form of nonderogatory real Matrices}\label{sect:proof-of-Jordanization}

\begin{theorem}[Jordan form of real endomorphisms]\label{thm:Jordanization}
Let $V$ be a real vector space, $\dim V=n.$ Let $\phi\in\mathfrak{gl}(V)$ be nonderogatory. There exists a basis of $V$ in which  the matrix $[\phi]$ of $\phi$ has a Jordan form, more precisely, the following hold.

\noindent
(A) If $\phi$ has a unique eigenvalue $\lambda$ with multiplicity $n$, 
then $[\phi] =\lambda \mathbb I_{V} + \sum\limits_{i=1}^{n-1}E_{i,i+1}.$

\noindent
(B) If $\phi$ has $n$ distinct complex eigenvalues $\lambda_j$, $\bar \lambda_j$, $j=1,\dots,\frac{n}{2}$ then $[\phi]$$=$ diag($J_1,\dots,J_{\frac{n}{2}}$), where $J_j=\begin{pmatrix}Re(\lambda_j)&-Im(\lambda_j)\\ Im(\lambda_j)&Re(\lambda_j)\end{pmatrix},$ $Im(\lambda_j)\neq 0.$

\noindent
(C) If $\phi$ has only two  eigenvalues which are both complex $z=r+is$ and $\bar z,$ where $r,s\in\mathbb R,$ $s\neq 0$, and if $n\ge 4$ (in which case $\phi$ is not diagonalizable in $\mathbb C$), then  $[\phi]$ has the form, with $M_s$, $M_n$ as in (\ref{Ms-Mn-M01}):  
\begin{eqnarray}\label{Jordancomplex}
[\phi]=r\mathbb I_{V}+sM_s+M_n=:M_z\;.
\end{eqnarray}

\noindent
(D) In the  general case, $[\phi]$ is a block diagonal matrix with blocks in the form (A), (B), (C) or diagonal with distinct diagonal real entries.
\end{theorem}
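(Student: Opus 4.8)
The plan is to reduce the general assertion (D) to the three model cases (A), (B), (C) by an invariant-subspace decomposition, and to treat each model case by exploiting that a nonderogatory $\phi$ has exactly one Jordan block per eigenvalue in any field extension where it is triangularizable (Lemma \ref{nonderogatorymatrices}, item 6). First I would factor $\chi_\phi$ as in the Factorization Lemma (Lemma \ref{lemma:factorization}) and apply the Primary Decomposition Theorem together with Cayley--Hamilton to write $V$ as a direct sum of the $\phi$-invariant subspaces $\ker(\phi-\lambda_j\,\mathbb I_V)^{k_j}$ attached to the real eigenvalues and $\ker\big((\phi-Re(z_l)\,\mathbb I_V)^2+Im(z_l)^2\,\mathbb I_V\big)^{m_l}$ attached to the conjugate pairs. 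On each summand the restriction of $\phi$ is again nonderogatory and carries either a single real eigenvalue or a single conjugate pair, so a basis adapted to the decomposition puts $[\phi]$ in block-diagonal form and reduces (D) to (A)--(C).

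For (A), with a single real eigenvalue $\lambda$ of multiplicity $n$, the minimal polynomial is $(X-\lambda)^n$, so $N:=\phi-\lambda\,\mathbb I_V$ satisfies $N^{n}=0$ and $N^{n-1}\neq 0$. Choosing $v$ with $N^{n-1}v\neq 0$, the family $(N^{n-1}v,\dots,Nv,v)$ is a basis on which $N$ acts as the shift, giving $[\phi]=\lambda\,\mathbb I_V+\sum_{i=1}^{n-1}E_{i,i+1}$ as in (\ref{principalnilpotent}); alternatively one invokes the cyclic vector of Lemma \ref{nonderogatorymatrices}, item 2. Case (B) then follows from the reduction: each simple conjugate pair $\lambda_j,\bar\lambda_j$ contributes a two-dimensional real invariant subspace, and if $v_j$ is a complex eigenvector for $\lambda_j$ the real vectors $Re(v_j),Im(v_j)$ form a basis of it in which, by separating real and imaginary parts of $\phi v_j=\lambda_j v_j$, the map $\phi$ acts through $J_j=\begin{pmatrix}Re(\lambda_j)&-Im(\lambda_j)\\ Im(\lambda_j)&Re(\lambda_j)\end{pmatrix}$.

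The bulk of the work is case (C), where $\phi$ has the single conjugate pair $z=r+is$, $\bar z$ with $s\neq 0$ and is non-diagonalizable over $\mathbb C$. Over $\mathbb C$ the space $V_{\mathbb C}$ splits as $W_z\oplus W_{\bar z}$ with $W_z=\ker(\phi-z\,\mathbb I_V)^{n/2}$, and by the single-block property there is $w\in W_z$ whose Jordan chain $u_j:=(\phi-z\,\mathbb I_V)^{j}w$, $j=0,\dots,\tfrac n2-1$, is a $\mathbb C$-basis of $W_z$, its conjugate $(\bar u_j)$ being a basis of $W_{\bar z}$. I would take the real and imaginary parts $Re(u_j),Im(u_j)$ as a real basis of $V$ and, using $\phi u_j=z\,u_j+u_{j+1}$ (with $u_{n/2}=0$), separate real and imaginary parts to read off the matrix of $\phi$: the diagonal $2\times2$ pieces assemble into $r\,\mathbb I_V+sM_s$ and the couplings $u_j\mapsto u_{j+1}$ into the nilpotent part. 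Matching this with $M_z=r\,\mathbb I_V+sM_s+M_n$ of (\ref{Jordancomplex}), (\ref{Ms-Mn-M01}) completes the case, and assembling the block-diagonal matrix over all summands yields (D).

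I expect the sole genuine obstacle to be the bookkeeping in (C): with the naive ordering $(Re(u_0),Im(u_0),Re(u_1),\dots)$ one obtains the rotation block of the opposite sign and a nilpotent coupling below rather than above the diagonal, i.e.\ $M_s^{T}$ and $M_n^{T}$ in place of $M_s,M_n$. One therefore has to fix the orientation --- reversing the order of the chain (equivalently relabelling $u_j\mapsto u_{\frac n2-1-j}$) and choosing the order and normalization of the real and imaginary parts, together with the choice of $z$ versus $\bar z$ --- so that the diagonal blocks realize the sign convention of $M_s$ in (\ref{Ms-Mn-M01}) and the couplings assemble into the upper-triangular $M_n$. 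Once the chain and its conjugate are aligned in this way, the remaining verification is a direct computation.
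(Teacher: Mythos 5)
Your proposal is correct, but it takes a genuinely different route from the paper in the two hard cases. For (A), you use the standard nilpotent-cyclic-vector argument: $N=\phi-\lambda\,\mathbb I_V$ has $N^{n-1}\neq 0$, so the chain $(N^{n-1}v,\dots,Nv,v)$ is a basis in which $N$ is the upper shift; the paper instead takes the cyclic basis $(\phi^{n-1}\bar x,\dots,\bar x)$, obtains a companion-type matrix $\tilde M_\lambda$, and then explicitly solves the conjugation equation $\tilde M_\lambda P=PM_\lambda$, exhibiting $P$ with entries $p_{k,l}=(-1)^{k-l}\lambda^{k-l}\complement_{n-l}^{k-l}$. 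The contrast is sharper in (C): you complexify, split $V_{\mathbb C}=W_z\oplus W_{\bar z}$ with $W_z=\ker(\phi-z\,\mathbb I)^{n/2}$, take a Jordan chain $u_j=(\phi-z\,\mathbb I)^j w$ (which you could even get without citing the one-block property of Lemma \ref{nonderogatorymatrices}(6), since $(\phi-z)\vert_{W_z}$ is nilpotent of index $n/2$ and the same cyclic argument as in (A) applies), and read off $M_z$ from the real and imaginary parts of the chain --- the classical real-Jordan-form argument. The paper never complexifies: it again builds the companion matrix $\tilde M_z$ from a cyclic vector, sets up the linear system $\tilde M_zP=PM_z$, derives recurrence relations (\ref{recurrencerelations}) for the $p_{k,l}$, and proves invertible solutions exist by computing $\det(P)=(\tfrac{z-\bar z}{2i})^{n^2/4}(p_{1,1}^2+p_{1,2}^2)^{n/2}q_n$ explicitly. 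What each buys: the paper's computation stays entirely over $\mathbb R$, is self-contained (only Cayley--Hamilton plus the cyclic vector of Lemma \ref{nonderogatorymatrices}), and yields the change-of-basis matrix and its full solution set (the lemma on $P=P_1e^Q$, $Q\in\mathbb R[M_z]$) as by-products; your argument is much shorter and conceptual, at the price of importing the complex primary decomposition and of the sign/ordering bookkeeping in (C), which you correctly identify and resolve (reversing the chain to move the couplings above the diagonal, and fixing the choice of $z$ versus $\bar z$ so the rotation blocks match the convention of $M_s$ in (\ref{Ms-Mn-M01})). Your reductions of (D) and (B) coincide with the paper's.
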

\subsection{Proof of Theorem \ref{thm:Jordanization}}
Let $\phi\in\mathfrak{gl}(V)$ be nonderogatory, where $V$ is a real $n$-dimensional vector space. We write   $\chi_\phi(X) = Q_1(X)Q_2(X)$ where $Q_1(X)$ has only complex (nonreal) zeros and the zeros of  $Q_2(X)$ are all real, so that $V=\ker(\chi_\phi(\phi)) = \ker(Q_1(\phi))\oplus \ker(Q_2(\phi)).$ The result follows  by noting  again that $\phi$ preserves both $\ker(Q_1(\phi))$ and $\ker(Q_2(\phi))$ and its restriction to each of them has a Jordan form, as discussed below.  In fact, $Q_1(X)$ factorizes as $Q_1(X) = Q_{1a}(X)Q_{1b}(X)$ where all the zeros of $Q_{1a}(X)$  have multiplicity $1$  and we can further factorize $Q_{1b}(X)$ into factors, each having only $2$ (complex conjugate) zeros with multiplicity greater than $1.$ So the latter boils down to the case where $\phi$ has only two complex conjugate eigenvalues, as discussed in (C) below. 

\noindent
(A) Suppose $\phi$ has p distinct eigenvalues $\lambda_1, \dots,\lambda_p,$  all of which are real and of respective multiplicity $k_1,\dots, k_p$, with 
$k_1+\dots+ k_p=n$, so that its characteristic polynomial factorizes as
 $\chi_\phi(X)=\chi_1(X)\chi_2(X)\cdots\chi_p(X)$, 
where $\chi_j(X)=(X-\lambda_j)^{k_j}$, for $j=1,\dots,p$. The polynomials $\chi_j(X)$
 are pairwise relatively prime. 
By the 
Primary Decomposition Theorem and Cayley-Hamilton theorem,  we have 
\begin{eqnarray}
\label{eq:split}V=\ker\chi_\phi(\phi) = \ker\chi_1(\phi)\oplus\ker\chi_2(\phi)\oplus\cdots\oplus\ker\chi_p(\phi).\end{eqnarray} 
Of course the subspaces $\mathcal E_j:=\ker(\phi-\lambda_j)^{k_j},$ $j=1,\dots,p,$ are all stable by $\phi$ and the restriction of $\phi$ to $\mathcal E_j$ is again a nonderogatory endomorphism of $\mathcal E_j$ with a unique eigenvalue,
for every $j=1,\dots,p.$ So this reduces to the case where $\phi$ has $n$ distinct real eigenvalues and is hence diagonalizable, or $\phi$ has a unique real eigenvalue $\lambda$ with multiplicity $n.$  We now assume the latter case.  Up to a sign, $ \chi_\phi(X)=(X-\lambda)^n=\sum\limits_{j=0}^n \complement_n^j(-1)^{n-j} \lambda^{n-j} X^{j},$  with $\complement_p^q=\frac{p!}{q!(p-q)!}$, for $p\ge q$. 
By Cayley-Hamilton's Theorem 
$\phi^n=\sum\limits_{j=0}^{n-1} \complement_n^j(-1)^{n-j+1} \lambda^{n-j} \phi^{j} $.
Choose $\bar x\in V$ such that   $(\bar e_1,\dots,\bar e_{n})=$$(  \phi^{n-1}\bar x, \; \phi^{n-2}\bar x, \; \dots,  \; \phi^{n-j}\bar x, \dots, \; \bar x)$ is a basis of $V.$  We thus have  
$\phi\bar e_1=\phi^n\bar x=\sum\limits_{j=0}^{n-1} \complement_n^j(-1)^{n-j+1} \lambda^{n-j} \phi^{j}\bar x =
\sum\limits_{j=1}^{n} \complement_n^j(-1)^{j+1} \lambda^{j} \bar e_j$ and $\phi\bar e_j=\bar e_{j-1}$, for $j=2,\dots,n$. So, the matrix of $\phi$ has the form
$\tilde M_\lambda=\sum\limits_{j=1}^{n} \tilde k_j E_{j,1}+\sum\limits_{j=1}^{n-1}E_{j,j+1},$
with $\tilde  k_j=\complement_n^j(-1)^{j+1} \lambda^{j}.$
 If $\lambda=0$, then we are done. If $\lambda\neq 0,$ we use a direct approach by looking for the coefficients $p_{l,j}$ of a matrix $P$ such that  $\tilde  M_\lambda P=PM_\lambda,$ where $M_\lambda =\lambda \mathbb I_{V} + \sum\limits_{l=1}^{n-1}E_{l,l+1}.$
We get 
$p_{k,l}=\sum\limits_{j=0}^{l-1} (-1)^{k-l+j} \lambda^{k-l+j} \complement_{n-l+j}^{k-l+j} p_{1,j+1}$,  
where  the numbers $ p_{1,j+1}$, $j=0,\dots,n-1,$ are seen as parameters.
In particular, the matrix $P$ whose coefficients in the above basis ($\bar e_s$) are 
$p_{kl}=\left\{\begin{array}{cll}
(-1)^{k-l}\lambda^{k-l}\complement_{n-l}^{k-l} &\text{ if }& k\ge l, \\
0 &\text{ if }& k<l
\end{array}\right.
$
is a solution.

\noindent (B) Suppose all the eigenvalues  of $\phi$  are complex (and nonreal) and $\phi$ is diagonalizable in $\mathbb C$. Equivalently, $\phi$  has $n$ distinct complex eigenvalues, say, $\lambda_j,$ $\bar \lambda_j,$ $j=1,\dots,\frac{n}{2},$ where of course, here, $\bar \lambda_j$ is the complex conjugate of $ \lambda_j.$  Let $\lambda =\lambda_R-i\lambda_I$  be an eigenvalue of $\phi$,  where $\lambda_R,\lambda_I$ are real numbers and $\lambda_I\neq 0$.
 Consider an  eigenvector  $v$ (with complex components) of $\phi$ with corresponding eigenvalue $\lambda$. Further set $\mathcal E_\lambda:=
\{ z v+\overline{zv}, z\in \mathbb C\}.$  Note that, as $\bar v$ is also an eigenvector of $\phi$ with eigenvalue $\bar \lambda$, we also have $\mathcal E_\lambda=\mathcal E_{\bar\lambda}.$ Furthermore,  $\mathcal E_\lambda$ is a real 2-dimensional vector subspace of $V$ 
which is stable by $\phi$ and 
 the vectors $Re(v)$ and $Im(v)$ form a basis of  $\mathcal E_\lambda$ in which the matrix  of the restriction of $\phi$ is
 $M_\lambda:=\begin{pmatrix}\lambda_R&-\lambda_I\\
\lambda_I &\lambda_R\end{pmatrix}.$ 
Indeed, any element of  $\mathcal E_\lambda$ is of the form 
$  z v+\overline{zv} = 2Re(z)\; Re(v) -2 Im(z)\; Im(v) $
and taking the real and imaginary parts of both sides of the equation 
\begin{eqnarray}\phi	v&=&\lambda v = \Big(\lambda_R-i\;\lambda_I\Big)\; \Big(Re(v)+i\;Im(v)\Big) \nonumber\\
&=& \Big(\lambda_R\; Re(v) + \lambda_I\; Im(v)\Big) + i \;\Big(\lambda_R\; Im(v) -\lambda_I \;Re(v)\Big), \end{eqnarray}
 one gets 
\begin{eqnarray} 
&& \phi Re(v)=\lambda_R\; Re(v) + \lambda_I\; Im(v)\;, \;\;  
 \phi Im(v)=- \lambda_I\; Re(v) + \lambda_R\; Im(v). 
\end{eqnarray} 
Note that  $M_\lambda$ is  nonderogatory  with $\chi_{M_\lambda}(X)=(X-\lambda_R)^2+\lambda_I^2$ and both $Re(v)$ and $Im(v)$ are in $\ker\chi_{M_\lambda}(\phi),$  more precisely, $\ker\chi_{M_\lambda}(\phi)=\mathcal E_\lambda .$ 

 In the basis $(Re(v_j), Im(v_j)),$ $j=1,\dots,\frac{n}{2},$ of $V,$ the map $\phi$ assumes  
 the following Jordan form diag($M_{\lambda_1},\dots,M_{\lambda_{\frac{n}{2}}}$), where $v_j$ is an eigenvector of $\phi$, as above, with eigenvalue $\lambda_j$. 

\noindent
(C) Suppose $\phi$ has only 2 eigenvalues which are complex $z = r+is$ and $\bar z,$ and $n\ge 4$. So $\phi$ is not diagonalizable and $\chi_\phi (X) =\Big((X-r)^2+s^2\Big)^{\frac{n}{2}}=\sum\limits_{j=0}^{\frac{n}{2}}\complement_{\frac{n}{2}} ^j s^{n-2j}(X-r)^{2j}.$  We write the latter as  
$\chi_\phi(X)=\sum\limits_{k=0}^{n} D_{n,k}(z)X^k\;,$ 
where, by a direct calculation, we obtain the coefficients  $D_{n,j}:\mathbb C\to\mathbb R$ which are given by
\begin{eqnarray}
 D_{n,j}(z)= 
(-1)^{j}\sum\limits_{p=\varepsilon_j}^{\frac{n}{2}}\complement_{\frac{n}{2}} ^p  \complement_{2p}^{j}\Big(\frac{z-\bar z}{2i}\Big)^{n-2p}\Big(\frac{z+\bar z}{2}\Big)^{2p-j} \;,
\end{eqnarray}
 where $2\varepsilon_j = j+ \frac{1}{2}(1-(-1)^j).$ By Cayley-Hamilton's Theorem, 
$\phi^n= -\sum\limits_{j=0}^{n-1}D_{n,j}(z)\phi^{j}.$ 
Following Lemma \ref{nonderogatorymatrices}, let $\bar x\in V$ such that $(\bar e_j:= \phi^{n-j}\bar x\; , \;j=1,\dots,n)$ is a basis of $V$. We have
\begin{eqnarray}\phi\bar e_j=\bar e_{j-1}, \; 2\leq j\leq n,\;\; \phi \bar e_1=\phi^n\bar  x=  -\sum\limits_{j=0}^{n-1}D_{n,j}(z)\phi^{j}\bar x=-\sum\limits_{j=0}^{n-1}D_{n,j}(z)\bar e_{n-j},\nonumber
\end{eqnarray}
 which in matrix form,
say $\tilde M_z$, reads 
$\tilde M_z  
=
\sum\limits_{j=1}^{n-1}E_{j,j+1}-\sum\limits_{j=1}^nD_{n,n-j}(z) E_{j,1}.$  
Now one may use a direct approach by looking for the explicit expressions of the  coefficients $p_{k,l}$ of a matrix $P$ such that  $\tilde M_z P=PM_z,$
where $M_{z}=r\mathbb I_{\mathbb R^n}+sM_s+M_n$, with $M_s$, $M_n$ as in (\ref{Ms-Mn-M01}).
Before proceeding, it is interesting to note the following. If $P_1$ and $P_2$ 
are invertible and  satisfy the equation  $\tilde M_zP=PM_z,$ then $P_1^{-1}P_2$ is in the isotropy subgroup of $M_z,$ that is, the subgroup of   GL$(V)$ consisting  of those $P_3$  satisfying $P_3M_zP_3^{-1}=M_z.$ 
Since $M_z$ is nonderogatory, its isotropy subgroup is a maximal subgroup of GL$(V)$. In fact, it is the group of invertible elements of $\mathbb R[M_z]$, it is connected, as the intersection $\mathbb R[M_z]\cap \det^{-1}(]0,+\infty[)$ of two connected subsets and hence it coincides with $\exp\Big(\mathbb R[M_z]\Big)$. 
So, there exists $Q\in\mathbb R[M_z]$ such that $P_1^{-1}P_2=\exp(Q).$ Thus, for a fixed 
invertible solution $P_1$, the  map $P_2\mapsto P_1^{-1}P_2$ is a 1-1 correspondence  between the set of invertible solutions of the equation  $\tilde  M_zP=PM_z$ and $\exp\Big(\mathbb R[M_z]\Big).$ We summarize this as follows.
\begin{lemma}Let $P_1$ be an invertible solution of the equation $\tilde  M_zP=PM_z.$ Then any other invertible solution $P$ is of the form  $P=P_1e^{Q},$ where $Q\in\mathbb R[M_z].$ The linear map $P_2\mapsto P_1^{-1}P_2$ is a 1-1 correspondence between the space of solutions 
and $\mathbb R[M_z],$ 
and
 invertible solutions are mapped to elements of the form $e^{Q}$,   $Q\in\mathbb R[M_z].$
\end{lemma}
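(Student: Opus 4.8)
The statement has essentially been prepared by the discussion preceding it, so the plan is to organise that reasoning into three steps. \emph{First}, I would reduce the intertwining equation to a centraliser condition. Since $P_1$ is an invertible solution, $\tilde M_z = P_1 M_z P_1^{-1}$, so for an arbitrary solution $P$ the equality $\tilde M_z P = P M_z$ becomes $P_1 M_z P_1^{-1} P = P M_z$, i.e. $M_z (P_1^{-1}P) = (P_1^{-1}P) M_z$. Setting $C := P_1^{-1}P$, the map $P \mapsto C$ is linear and bijective on all of $\mathfrak{gl}(V)$, and it carries the solution space of $\tilde M_z P = P M_z$ bijectively onto $cent(M_z,\mathfrak{gl}(V))$. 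Because $M_z$ is nonderogatory (its characteristic and minimal polynomials both equal $((X-r)^2+s^2)^{\frac{n}{2}}$), Lemma \ref{nonderogatorymatrices} gives $cent(M_z,\mathfrak{gl}(V)) = \mathbb R[M_z]$. This already yields the asserted $1$-$1$ linear correspondence between the solution space and $\mathbb R[M_z]$, with inverse $C \mapsto P_1 C$.

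\emph{Next}, I would identify which $C$ give an invertible $P$. Since $P = P_1 C$ with $P_1$ invertible, $P$ is invertible if and only if $C$ lies in the group $U(\mathbb R[M_z])$ of invertible elements of $\mathbb R[M_z]$. The substance of the lemma is therefore the identity $U(\mathbb R[M_z]) = \exp(\mathbb R[M_z])$. The inclusion $\exp(\mathbb R[M_z]) \subseteq U(\mathbb R[M_z])$ is immediate: $\mathbb R[M_z]$ is a finite-dimensional commutative subalgebra containing $\mathbb I_V$, hence closed under the exponential series, and $\exp(Q)$ is a unit with inverse $\exp(-Q)$. For the reverse inclusion I would note that $U(\mathbb R[M_z])$ is an open subgroup of $GL(V)$, hence a connected-or-not abelian Lie group with Lie algebra $\mathbb R[M_z]$ whose group exponential is the matrix exponential; since the exponential map of a \emph{connected} abelian Lie group is surjective, it suffices to check connectedness.

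\emph{The main obstacle} is thus the connectedness of $U(\mathbb R[M_z])$, and this is precisely where the hypothesis that $M_z$ has only the nonreal eigenvalues $z,\bar z$ is used. Writing an element of $\mathbb R[M_z]$ as $p(M_z)$, its eigenvalues are $p(z)$ and $p(\bar z)=\overline{p(z)}$, each of multiplicity $\frac{n}{2}$, so $\det p(M_z) = \big(p(z)\,\overline{p(z)}\big)^{\frac{n}{2}} = |p(z)|^{n}\ge 0$, with equality exactly when $p(z)=0$. Hence the non-units form the maximal ideal $\mathfrak m = \{p(M_z): p(z)=0\}$, and since $\mathbb R[M_z]/\mathfrak m \cong \mathbb R[X]/((X-r)^2+s^2)\cong \mathbb C$, this $\mathfrak m$ is a real-linear subspace of codimension $2$. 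The complement of a codimension-$2$ subspace of a real vector space is connected, so $U(\mathbb R[M_z]) = \mathbb R[M_z]\setminus \mathfrak m$ is connected, which closes the gap. Combining the steps, under $P\mapsto P_1^{-1}P$ the invertible solutions correspond exactly to $\exp(\mathbb R[M_z])$, i.e. every invertible solution is $P=P_1 e^{Q}$ with $Q\in\mathbb R[M_z]$; everything except the connectedness of $U(\mathbb R[M_z])$ is formal once $cent(M_z,\mathfrak{gl}(V))=\mathbb R[M_z]$ is available.
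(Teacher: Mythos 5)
Your proof is correct, and its architecture is the same as the paper's: pass to $C=P_1^{-1}P$, reduce the intertwining relation to the centralizer condition $M_zC=CM_z$, invoke nonderogatority of $M_z$ (Lemma \ref{nonderogatorymatrices}) to identify $cent(M_z,\mathfrak{gl}(V))=\mathbb R[M_z]$, note that invertible solutions correspond to units of $\mathbb R[M_z]$, and conclude by surjectivity of the exponential on a connected abelian Lie group. The one step where you genuinely diverge is the connectedness of the unit group $U(\mathbb R[M_z])$, and there your argument is the better one. The paper justifies connectedness by saying that $U(\mathbb R[M_z])=\mathbb R[M_z]\cap\det^{-1}(]0,+\infty[)$ is ``the intersection of two connected subsets''; that is not a valid topological principle (an intersection of connected sets can be disconnected), so the paper's own justification has a gap even though its conclusion is true. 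Your route closes that gap rigorously: since the eigenvalues of $p(M_z)$ are $p(z)$ and $\overline{p(z)}$ each of multiplicity $\tfrac n2$, one has $\det p(M_z)=|p(z)|^n$, so the non-units form the maximal ideal $\mathfrak m=\{p(M_z):p(z)=0\}$, of real codimension $2$ because $\mathbb R[M_z]/\mathfrak m\cong\mathbb C$, and the complement of a codimension-$2$ subspace of a real vector space is connected. This also makes transparent exactly where the hypothesis $s\neq 0$ enters: for a real eigenvalue the residue field would be $\mathbb R$, the non-units would be a hyperplane, the unit group would have two components, and $\exp$ would only reach the identity component. Two small remarks: your version of the correspondence (all solutions $\leftrightarrow\mathbb R[M_z]$, not just invertible ones) actually matches the lemma's statement more closely than the paper's discussion, which only treats invertible $P_1,P_2$; and the phrase ``open subgroup of $GL(V)$'' should read ``Lie subgroup of $GL(V)$, open in $\mathbb R[M_z]$'' --- openness holds in $\mathbb R[M_z]$, not in $GL(V)$, though this does not affect the argument.
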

Now from  the explicit expressions of the equation $\tilde  M_z P=PM_z$,  we 
extract the following linear recurrence relations, where of course, $s=\frac{z-\bar z}{2i}$ and $r=\frac{z+\bar z}{2}$ and $D_{k,l}=D_{k,l}(z)$:
\begin{eqnarray}\label{recurrencerelations}
p_{k+1,1}&=&p_{1,1}D_{n,n-k}+   rp_{k,1}+sp_{k,2} \; , \;  
p_{k+1,2} =p_{1,2}D_{n,n-k}+rp_{k,2}-sp_{k,1},\nonumber
\\
p_{k+1,2j+3} &=&p_{1,2j+3}D_{n,n-k}+rp_{k,2j+3}+sp_{k,2j+4}+p_{k,2j+1},\\
p_{k+1,2j+4}&=& p_{1,2j+4}D_{n,n-k}+rp_{k,2j+4}-sp_{k,2j+3}+ p_{k,2j+2},
\nonumber\\
 && j=0,\dots,\frac{n}{2}-2,\; k=1,\dots,n-1.\nonumber
\end{eqnarray}
Obviously, these linear recurrence relations admit explicit solutions $p_{k,l}$ which are linear combinations of $p_{1,j}$, $j=1,\dots,n$, the coefficients being polynomials in $z$, $\bar z.$ One easily checks that 
\beqn \det(P)= (\frac{z-\bar z}{2i})^{\frac{n^2}{4}}(p_{1, 1}^2+p_{1, 2}^2)^{\frac{n}{2}}q_n,
\eeqn
 where
 $q_4=4$ and for $n>4 $,
$q_n=(-1)^{\frac{n}{2}}(\frac{n}{2}-1)^n$. 
 This proves that invertible solutions, of the equation  $\tilde M_z P=PM_z,$ always exist, they correspond to $p_{1, 1}^2+p_{1, 2}^2\neq 0$. More precisely, from (\ref{recurrencerelations}), the  two first columns of $P$ can be rewritten, for any $k=1,\dots,n-1$,  as
\beqn \begin{pmatrix}p_{k+1,1}\\
p_{k+1,2}\end{pmatrix}&=&
\begin{pmatrix}r & s\\
-s &r\end{pmatrix} \begin{pmatrix}p_{k,1}\\
p_{k,2}\end{pmatrix}+ D_{n,n-k} \begin{pmatrix}p_{1,1}\\
p_{1,2}\end{pmatrix}\nonumber
\eeqn which gives rise to the explicit expression
\beqn \begin{pmatrix}p_{k+1,1}\\
p_{k+1,2}\end{pmatrix}&=& \Big[
\sum\limits_{l=0}^{k} D_{n,n-(k-l)} \begin{pmatrix}r & s\\
-s &r\end{pmatrix}^l \Big] \begin{pmatrix}p_{1,1}\\
p_{1,2}\end{pmatrix}\;.
\eeqn
 Again from (\ref{recurrencerelations}), the columns $2j+3$ and $2j+4$ of $P$ read, for any $j=0,\dots,\frac{n}{2}-2$ and $k=1,\dots,n-1$,  as
\beqn\label{recurrencerelations2}
 \begin{pmatrix}p_{k+1,2j+3}\\
p_{k+1,2j+4}\end{pmatrix}&=&
D_{n,n-k} \begin{pmatrix}p_{1,2j+3}\\
 p_{1,2j+4}\end{pmatrix}
+ 
\begin{pmatrix}r&s\\
 -s& r\end{pmatrix}
\begin{pmatrix}p_{k,2j+3}\\
 p_{k,2j+4}\end{pmatrix}
+
\begin{pmatrix}p_{k,2j+1}\\
p_{k,2j+2}\end{pmatrix}
\nonumber\\
&=&
 \Big(\sum\limits_{l_{1}=0}^kD_{n,n-(k-l_{1})}\begin{pmatrix}r&s\\
 -s& r\end{pmatrix}^{l_{1}}\Big)\begin{pmatrix}p_{1,2j+3}\\
 p_{1,2j+4}\end{pmatrix} 
\nonumber\\ 
&&+
\Big(\sum\limits_{l_{1}=0}^{k-1}\begin{pmatrix}r&s\\
 -s& r\end{pmatrix}^{l_{1}}\Big)\begin{pmatrix}p_{k-l_{1},2j+1}\\
 p_{k-l_{1},2j+2}\end{pmatrix} \;.
\eeqn
Further developing (\ref{recurrencerelations2}), leads to the final explicit expressions
\beqn \begin{pmatrix}p_{k+1,2j+3}\\
p_{k+1,2j+4}\end{pmatrix}&=& \sum\limits_{q=0}^{j+1}U(k,j,q) \begin{pmatrix}p_{1,2(j-q)+3}\\
 p_{1,2(j-q)+4}\end{pmatrix} \;, 
\eeqn
some of the $2\times 2$ matrices $U(k,j,q) $  have quite lengthy general expressions, some of which being zero (see cases $n=4,6,10 $ 
 below).
For example, one gets
\beqn
p_{2,1}&=& -r(n-1)p_{1, 1}+sp_{1, 2} \; , \; 
p_{2,2}= -r(n-1)p_{1, 2}-p_{1, 1}s, \nonumber\\
p_{3,1}&=&\frac{n-2}{2}\Big((s ^2+(n-1)r^2)\; p_{1, 1}-2rs\; p_{1, 2}\Big),
\nonumber\\
 p_{3,2} &=& \frac{n-2}{2}\Big((s ^2+(n-1)r^2)\; p_{1, 2}+2rs\; p_{1, 1}\Big) \; , \text{ and for } j\ge 1, \nonumber\\
p_{2, 2j+1}&=& -(n-1)rp_{1, 2j+1}+sp_{1, 2j+2}+p_{1, 2j-1}\;,\; \nonumber\\
p_{2, 2j+2}&=& -(n-1)rp_{1, 2j+2}-sp_{1, 2j+1}+p_{1, 2j} \;,\nonumber\\
&\vdots&\nonumber\\
p_{n,1} &=& (sp_{1,2}-rp_{1,1}) \vert z \vert^{n-2}  \;, \; 
p_{n,2}  = -(s p_{1,1}+rp_{1,2}) \vert z \vert^{n-2}\;,   \nonumber
\\
p_{n,3}&=& \Big((-r^2+s^2)p_{1,1}+2rsp_{1, 2}\Big)\vert z\vert^{n-4}+\Big(sp_{1, 4}-rp_{1, 3}\Big)\vert z \vert ^{n-2}\;,
\nonumber\\
p_{n,4}&=&\Big(2rsp_{1, 1}+(r^2-s^2)p_{1, 2}\Big)\vert z\vert ^{n-4}+\Big(rp_{1, 4}+sp_{1, 3}\Big)\vert z\vert ^{n-2}, 
\nonumber\\
p_{n, 2j+3} &=& (sp_{1, 2j+4}-rp_{1, 2j+3})\vert z \vert^{n-2} 
+ (rp_{n, 2j+1}-sp_{n, 2j+2})\vert z \vert^{-2},\nonumber\\
p_{n,2j+4}&= &-(r p_{1,2 j+4}+s p_{1,2 j+3}) \vert z \vert^{n-2}-(r p_{n,2 j+2}+s p_{n,2 j+1}) \vert z \vert^{-2}\;.
\eeqn

\subsection{Some particular examples in low dimensions}

In low dimensions, one gets simple expressions for $P$ by setting $p_{1,1}=1$ and $p_{1,j}=0$, if $j\ge 2$ or $p_{1,2}=1$ and $p_{1,j}=0$, if $j \neq  2$. Here are some examples.

\noindent
$\bullet$ For $n=4,$ we get 
$p_{1,1}= 1,$ 
$p_{1,2}= p_{1,3}= p_{1,4}= 0,$
 $p_{2,1}=   -3r,$
  $p_{2,2}= -s,$  
 $p_{2,3}= 1$, 
$p_{2,4}=0$, $p_{3,1}=3r^2+s^2$,
$p_{3,2}= 2rs$, 
$p_{3,3}= -2r,$
$p_{3,4}= -2s,$
$p_{4,1}= -r(r^2+s^2)$, 
$p_{4,2}= -r^2s-s^3,$
 $p_{4,3}= r^2-s^2$,
 $p_{4,4}= 2rs$ and  $\det(P)=4s^4.$


\noindent
$\bullet$ For $n=6$, one gets 
$
p_{1,1}= 1
,
$ $p_{1,2}= p_{1,3}= p_{1,4}=p_{1,5}= p_{1,6}=0,$

\noindent
 $ 
p_{2,1}= -5r
, 
$ $
p_{2,2} := -s
,
$
%
 $
p_{2,3}= 1
,
$ $p_{2,4}= p_{2,5}= p_{2,6}= 0,$
%
 $
p_{3,1}= 10r^2+2s^2
,
$

\noindent
 $
p_{3,2}= 4rs
,$
%
$
p_{3,3}= -4r
,
p_{3,4}= -2s
,
p_{3,5}= 1
,
$
%
$p_{4,1}= -2r(5r^2+3s^2)
,$ 

\noindent
 $
p_{4,2}= -2s(3r^2+s^2)
,
p_{4,3} = 6r^2
,$ 
  %
 $
p_{4,4} = 6rs
,
p_{4,5} = -3r
,
p_{4,6} = -3s
,$ 

\noindent
$
p_{5,1} := (5r^2+s^2)(r^2+s^2)
,$ $
p_{5,2} := 4rs(r^2+s^2)
,$ $
p_{5,3} = -4r^3
,$ 
%
 $
p_{5,4} := -2s(3r^2+s^2)
,$ 

\noindent 
$
p_{5,5} = 3r^2-3s^2
,$ $
p_{5,6} = 6rs
,$
%
$
p_{6,1} = -r(r^2+s^2)^2
,$ $
p_{6,2} = -s(r^2+s^2)^2
,$ $
p_{6,3} = r^4-s^4
,$ 
%
$
p_{6,4} := 2rs(r^2+s^2),$ $
p_{6,5} := -r(r^2-3s^2),$ $
p_{6,6}= -s(3r^2-s^2)$, $\det(P)=-64s^9.$

\noindent
$\bullet$ For $n=10,$  we choose $p_{1,1}=1$ and $p_{1,j}=0$, $\forall j\ge 2,$ so the nonzero coefficients of $P$ are:  
%
$p_{1,1}=1$, 
$p_{2,1}=-9 r,$ $ p_{2,2}=-s, $ $p_{2,3}=1,$ $ p_{3,1}=4(9r^2+s^2),$ $ p_{3,2}=8 s r,$ 

\noindent
$ p_{3,3}=-8 r, $ $p_{3,4}=-2 s,$ $ p_{3,5}=1, $ $p_{4,1}=-28 r (3 r^2+s^2), $
%
$p_{4,2}=-4 s (7 r^2+s^2),$ 

\noindent $p_{4,3}=28 r^2+2 s^(2),$ $p_{4,4}=14 s r, $  $p_{4,5}=-7 r, $
%
$p_{4,6}=-3 s,$ $ p_{4,7}=1,$

\noindent
 $p_{5,1}=6(21 r^4+14 r^2 s^2+s^4),$ $ p_{5,2}=8 sr(7 r^2+3 s^2), $ 
%
$p_{5,3}=-4 r(14 r^2+3 s^2), $

\noindent
 $p_{5,4}=-6 s (7 r^2+ s^2), $ $p_{5,5}=21 r^2-s^2, $ 
%
$p_{5,6}=18 s r, $ $p_{5,7}=-6 r,$ $ p_{5,8}=-4 s, $ $p_{5,9}=1,$

\noindent
$ p_{6,1}=-2 r(63 r^4+70 r^2 s^2+15 s^4),$ $ p_{6,2}=-2 s(35 r^4+30 r^2 s^2+3 s^4), $

\noindent
$ p_{6,3}=10 r^(2) (7 r^2+3 s^2), $ $p_{6,4}=10 s r(7 r^2+3 s^2),$ $  p_{6,5}=5 r (-7 r^2+s^2),$

\noindent
$ p_{6,6}=-5 s(9 r^2+s^2), $  $p_{6,7}=5(3 r^2-s^2), $  $p_{6,8}=20 s r, $  $p_{6,9}=-5 r,$ 
$ p_{6,10}=-5 s,$

\noindent
$ p_{7,1}=4(r^2+s^2) (21 r^4+14 r^2 s^2+s^4), $  $p_{7,2}=8s r (7 r^2+3 s^2) (r^2+s^2),$

\noindent
 $ p_{7,3}=-8 r^3 (7 r^2+5 s^2), $  $p_{7,4}=-2 s (35 r^4+30 r^2 s^2+3 s^4), $  
%
$p_{7,5}=5(7 r^4-2 r^2 s^2-s^4),$

\noindent
 $ p_{7,6}=20 s r (3 r^2+s^2), $ $p_{7,7}=-20 r (r^2-s^2),$ 
%
$ p_{7,8}=-40 s r^2,$ $p_{7,9}=10 (r^(2)-s^(2)),$ $ p_{7,10}=20 s r,$ 
%
$ p_{8,1}=-12 r (3 r^2+s^2) (r^2+s^2)^2, $ $p_{8,2}=-4 (7 r^2+s^2) (r^2+s^2)^2 s,$

\noindent
$ p_{8,3}=2 (r^2+s^2) (14 r^4+r^2 s^2-s^4),$ $ p_{8,4}=6 s r (7 r^2+3 s^2) (r^2+s^2), $

\noindent
$p_{8,5}=-r (21 r^4-10 r^2 s^2-15 s^4), $ $p_{8,8}=40 r^3 s,$ $ p_{8,9}=-10 r (r^2-3 s^2),$

\noindent
$ p_{8,10}=-10 s (3 r^2-s^2),$ $ p_{9,1}=(9 r^2+s^2) (r^2+s^2)^3, $  $p_{9,2}=8 s r (r^2+s^2)^3, $

\noindent
$ p_{9,3}=-4 r (2 r^2-s^2) (r^2+s^2)^2, $  $p_{9,4}=-2 (7 r^2+s^2) (r^2+s^2)^2 s, $

\noindent
$p_{9,5}=(r^2+s^2) (7 r^4-12 r^2 s^2-3 s^4),$  $ p_{9,6}=2 s r (9 r^2+s^2) (r^2+s^2),$

\noindent
$ p_{9,7}=-2 r (3 r^4-10 r^2 s^2-5 s^4),$   $ p_{9,8}=-4 s (5 r^4-s^4),$

\noindent
$ p_{9,9}=5 (r^2-2 s r-s^2) (r^2+2 s r-s^2),$  $ p_{9,10}=20 s r (r-s) (r+s), $

\noindent
$p_{10,1}=-r (r^2+s^2)^4,$  $ p_{10,2}=-s (r^2+s^2)^4,$  $ p_{10,3}=(r^2-s^2) (r^2+s^2)^3, $

\noindent
$p_{10,4}=2 s r (r^2+s^2)^3, $ $p_{10,5}=-r (r^2-3 s^2) (r^2+s^2)^2, $

\noindent
$p_{10,6}=-(3 r^2-s^2) (r^2+s^2)^2 s,$  $ p_{10,7}=(r^2+s^2) (r^2-2 s r-s^2) (r^2+2 s r-s^2),$

\noindent
$ p_{10,8}=4 s r (r-s) (r+s) (r^2+s^2), $  $p_{10,9}=-r (r^4-10 r^2 s^2+5 s^4), $

\noindent
$p_{10,10}=-s (5 r^4-10 r^2 s^2+s^4)$ and $\det(P)=-1048576s^{25}.$

\end{document}